	\newcommand{\identicalto}[1]{\footnote{Identical to #1}}
	\newcommand{\partof}[1]{\footnote{Part of #1}}
	\newcommand{\similarto}[1]{\footnote{Similar to #1}}
        \newcommand{\eg}{\emph{e.g.}\xspace} 
        \newcommand{\Io}[1][n]{\ensuremath{\mathbb{I}_{#1,0}}\xspace}
        \newcommand{\e}[2]{\ensuremath{\mathfrak{e}_{#1,#2}}\xspace}
        \newcommand{\F}[2]{\ensuremath{\mathfrak{f}_{#1,#2}}\xspace}
	\newcommand{\ftn}[3]{ #1 \colon #2 \rightarrow #3 }
	\newcommand{\setof}[2]{\ensuremath{\left\{ #1 \: : \: #2 \right\}}}
        \newcommand{\ext}[3]{\ensuremath{#1\hookrightarrow#2\twoheadrightarrow#3}\xspace}
	\newcommand{\A}{\ensuremath{\mathfrak{A}}\xspace}
        \newcommand{\B}{\ensuremath{\mathfrak{B}}\xspace}
        \newcommand{\Homsix}{\ensuremath{\operatorname{Hom}_{\mathrm{six}}}\xspace}
	\newcommand{\kk}{\ensuremath{\mathit{KK}}\xspace}
	\newcommand{\kkE}{\ensuremath{\mathit{KK}_{\mathcal{E}}}\xspace}
	\newcommand{\Hom}{\ensuremath{\operatorname{Hom}}}
	\newcommand{\Aut}{\ensuremath{\operatorname{Aut}}}
\newcommand{\CInnO}{\ensuremath{\operatorname{\overline{Inn}_0}}}
\newcommand{\CInn}{\ensuremath{\operatorname{\overline{Inn}}}}
	\newcommand{\diag}{\ensuremath{\operatorname{diag}}}
	\newcommand{\im}{\ensuremath{\operatorname{im}}}
	\newcommand{\id}{\ensuremath{\operatorname{id}}}
	\newcommand{\multialg}[1]{\mathcal{M}(#1)\xspace}
	\newcommand{\starhoms}{{$*$}-homomorphisms\xspace}
	\newcommand{\starautos}{{$*$}-automorphisms\xspace}
	\newcommand{\cstar}{{$C \sp \ast$}\xspace}
        \newcommand{\kE}{\ensuremath{\underline{\mathit{K}}_{\mathcal{E}}}\xspace}
	\newcommand{\Z}{\ensuremath{\mathbb{Z}}\xspace}
	\newcommand{\C}{\ensuremath{\mathbb{C}}\xspace}
	\newcommand{\N}{\ensuremath{\mathbb{N}}\xspace}
	\newcommand{\K}{\ensuremath{\mathbb{K}}\xspace}
	\newcommand{\ksix}{\ensuremath{K_{\mathrm{six}}}\xspace}
        \newcommand{\cf}{\emph{cf.}\xspace} 
        \newcommand{\ie}{\emph{i.e.}\xspace}
	\theoremstyle{plain}
	\newtheorem{thm}{Theorem}[section]
	\newtheorem{lemma}[thm]{Lemma}
	\newtheorem{theor}[thm]{Theorem}
	\newtheorem{propo}[thm]{Proposition}
	\newtheorem{corol}[thm]{Corollary}
	\newtheorem{assumption}[thm]{Assumption}
	\newtheorem{remar}[thm]{Remark}	
	\theoremstyle{definition}
	\newtheorem{defin}[thm]{Definition}
	\numberwithin{equation}{section}
	\numberwithin{figure}{section}
	\newcommand{\bolddefine}[1]{\textbf{#1}}
\begin{document}
	\title{Automorphisms of Cuntz-Krieger algebras}
	\author{S{\o}ren Eilers}
        \address{Department of Mathematical Sciences \\
        University of Copenhagen\\
        Universitetsparken~5 \\
        DK-2100 Copenhagen, Denmark}
        \email{eilers@math.ku.dk }
        \author{Gunnar Restorff}
        \address{Department of Science and Technology \\ 
        University of the Faroe Islands \\
	N\'oat\'un~3 \\ 
	FO-100 T\'orshavn, Faroe Islands}
	\email{gunnarr@setur.fo}
	\author{Efren Ruiz}
        \address{Department of Mathematics\\University of Hawaii,
	Hilo\\200 W. Kawili St.\\
	Hilo, Hawaii\\
	96720-4091 USA}
        \email{ruize@hawaii.edu}
        \date{\today}


	\keywords{KK-theory, UCT}
	\subjclass[2010]{Primary: 46L40, 46L80}

	\begin{abstract}
	We prove that the natural homomorphism from Kirchberg's
        ideal-related $\kk$-theory, $\kkE( e, e' )$, with one
        specified ideal, into $\mathrm{Hom}_{ \Lambda } (
        \underline{K}_{\mathcal{E}} ( e ) ,
        \underline{K}_{\mathcal{E}} ( e' ) )$ is an isomorphism for
        all extensions $e$ and $e'$ of separable, nuclear
        $C^{*}$-algebras in the bootstrap category $\mathcal{N}$ with
        the $K$-groups of the associated cyclic six term exact sequence being finitely generated, 
        having zero exponential map and with
        the $K_{1}$-groups of the quotients being free abelian groups. 

	This
        class includes all Cuntz-Krieger algebras with
        exactly one non-trivial ideal. 
        Combining our results with the results of Kirchberg, we classify automorphisms of the stabilized purely infinite Cuntz-Krieger algebras with exactly one non-trivial ideal modulo asymptotically unitary equivalence.  We also get a classification result modulo approximately unitary equivalence.

The results in this paper also apply to certain graph algebras.
	\end{abstract}
       	\maketitle

Understanding groups of \starautos has been a key ambition since the early days of \cstar-algebra theory as perhaps most clearly indicated by the title of \cite{gkp:csaatag}. The subgroups of inner automorphisms and inner automorphisms given by unitaries in the base component, respectively, have closures
$$\CInn(\A) \qquad \CInnO(\A)$$
which are normal subgroups, so one usually focuses attention on the quotient groups
$$\Aut(\A)/\CInn(\A)\qquad \Aut(\A)/\CInnO(\A),$$
trying in particular to describe the latter groups by means of $K$-theory in situations where the \cstar-algebra in question has already been seen to be classifiable using a certain $K$-theoretical invariant. Such analyses have already been carried out, resulting in very satisfactory results, in many cases where the \cstar-algebra in question is either stably finite or simple, cf.\ \cite{kirchpure}, \cite{ncp:actfnpiscsa}, \cite{gaemr:tagotircsa}, \cite{multcoeff}, \cite{pwner:tagoastaa}, and~\cite{pwner:tagozs}.

In the present paper we consider the group $\Aut(\A)/\CInn(\A)$ for a selected class of \cstar-algebras which are non-simple and purely infinite. We focus on the most basic class of non-simple Cuntz-Krieger algebras, those in relation to dynamical systems which are \emph{two-component} in the sense of Huang, \cite{dh:tcotccka}, and hence are given with a unique ideal. These \cstar-algebras are classified up to stable isomorphism by the cyclic six term exact sequence in $K$-theory, but although three independent proofs of this fact have been given --- by R\o{}rdam (\cite{extpurelyinf}), by Bonkat (\cite{bonkat}), and by the second named author (\cite{cp:ckalg}) --- none of these allow the computation of $\Aut( \A)/\CInn(\A)$. Furthermore, it was proved in \cite{err:nsikirkkt} that the most obvious $K$-theoretical candidate for an invariant which could be used to compute  $\Aut(\A)/\CInn(\A)$ fails in this case.

Inspired by the work of Kirchberg-Phillips and Dadarlat-Loring in the simple case and Bonkat, Meyer-Nest and Kirchberg in the non-simple case, we use the profound isomorphism result by Kirchberg (\cite{kirchpure}) to almost reduce the problem to computing a certain group in Kirchberg's equivariant $\kk$-theory. As in the simple case with Rosenberg-Schochet's Universal Coefficient Theorem (\cite{jrcs:tktatuctfkgkf}), Bonkat's UCT (\cite{bonkat}) is not of much help here, because the existence of such a UCT does not, in itself, aid us in computing $\Aut(\A)/\CInn(\A)$, since it mainly gives us information about the additive structure of $\kk$-groups and not the multiplicative structure (moreover, in contrast to the usual UCT, Bonkat's UCT does not even split, cf.\ \cite{err:nsikirkkt}). 
In the simple case, the problem was already solved by Dadarlat-Loring by having proved a Universal Multi-Coefficient Theorem (UMCT), cf.\ \cite{multcoeff}. This UMCT has turned out to be very important to a large variety of problems, including determining automorphism groups, extending known and proving new classification theorems for \cstar-algebras, providing a large variety of examples and counterexamples, as well as determining $\kk$ as a ring. 
It turns out, that the tools developed in \cite{err:idealkthy} may be refined to yield a complete algebraic description of Kirchberg's ideal-related $\kk$-group in terms of a UMCT in the case of interest. This then combines with recent results on establishing semiprojectivity of certain \cstar-algebras with one distinguished ideal to give the desired description of $\Aut(\A)/\CInn(\A)$. 

The core of our argument is a rather complex algebraic analysis of the proposed invariant which draws on the fact that Cuntz-Krieger algebras have real rank zero, and that the $K_1$-groups of Cuntz-Krieger algebras are always free and finitely generated. Although it is clear that the present UMCT will hold for a class that is much larger than the class of Cuntz-Krieger algebras (see Remark~\ref{rem-remaboutscopeofinvariant}), it is not easy to venture a guess about whether or not the invariant which allows our description of the automorphism groups of Cuntz-Krieger algebras could be amended to all cases of purely infinite \cstar-algebras with a unique ideal. 

The paper is organized as follows. 
In Section~1, the definitions and results from \cite{err:idealkthy} are recalled --- among there the definition of ideal-related $K$-theory with coefficients. 
In Section~2, some concrete, very specific projective resolutions of extensions of Kirchberg algebras are constructed. 
These are important for the proof of the main theorem. 
In Section~3, we prove exactness of a certain sequence, which serves as the main technical ingredient of the proof of the main theorem. 
In Section~4, the main theorem, Theorem~\ref{thm:main}, is given, stating a UMCT for the algebras in question. 
In Section~5, a reduced invariant for Cuntz-Krieger algebras is introduced, while in Section~6, the main theorem is used to determine the above mentioned groups $\Aut(\A)/\CInn(\A)$ for two-component Cuntz-Krieger algebras.  

\section{Definition of $\underline{K}_{\mathcal{E}}$}

In this section we recall some definitions and results from \cite{err:idealkthy} -- specifically we recall the definition of the ideal-related $K$-theory with coefficients, $\underline{K}_{\mathcal{E} } (e)$. Moreover we prove a few lemmata which are needed later in this paper. 
Throughout the paper $\N$, $\N_0$ and $\N_{\geq 2}$ will denote the positive integers, the nonnegative integers, and the integers greater or equal to two, respectively. Moreover, $\mathsf{M}_n$ will denote the $n\times n$ matrices with entries from the complex numbers $\C$. 

\begin{defin}\identicalto{\cite[Def.~2.1]{err:idealkthy}}
Let $\mathfrak{A}$ be a $C^{*}$-algebra.  Then we define the \bolddefine{suspension} and the \bolddefine{cone} of $\mathfrak{A}$ as 
\begin{align*}
\mathsf{S}\mathfrak{A} = \setof{ f \in C( [0, 1] , \mathfrak{A} ) }{ f(0)=0, f(1) = 0 } \qquad
\mathsf{C}\mathfrak{A} = \setof{ f \in C( [0, 1 ] , \mathfrak{A} ) }{ f(0) = 0 }
\end{align*}
respectively.
\end{defin}

\begin{defin}\identicalto{\cite[Def.~6.1]{err:idealkthy}}
Let $n \in \N_{\geq 2}$.  We let $\mathbb{I}_{n,0}$ denote the (non-unital) \bolddefine{dimension-drop interval}, \ie, $\mathbb{I}_{n,0}$ is the mapping cone of the unital $*$-homomorphism from $\C$ to $\mathsf{M}_{n}$.
\end{defin}
\begin{defin}\partof{\cite[Def.~2.7]{err:idealkthy}}
For an extension $e \colon \mathfrak{A}_{0} \overset{ \iota }{ \hookrightarrow } \mathfrak{A}_{1} \overset{ \pi }{ \twoheadrightarrow } \mathfrak{A}_{2}$, we let
\begin{align*}
\mathfrak{mc}(e) \colon \mathsf{S} \mathfrak{A} _{2}\overset{ \iota_{\mathfrak{mc} } }{ \hookrightarrow } \mathsf{C}_{\pi} \overset{ \pi_{ \mathfrak{mc} } }{ \twoheadrightarrow } \mathfrak{A}_{1} \\
\mathsf{S}e \colon \mathsf{S} \mathfrak{A}_{0} \overset{ \mathsf{S} \iota }{ \hookrightarrow } \mathsf{S} \mathfrak{A}_{1} \overset{ \mathsf{S} \pi }{ \twoheadrightarrow } \mathsf{S} \mathfrak{A}_{2}
\end{align*}
be the mapping cone sequence and suspension sequence of $e$, respectively.
\end{defin}

\begin{defin}\identicalto{\cite[Def.~6.2]{err:idealkthy}}
Let $n \in \N_{\geq 2}$.  We let $\mathfrak{e}_{n,0}$ denote the mapping cone sequence 
\begin{equation*}
\mathfrak{e}_{n,0} \colon \mathsf{S}\mathsf{M}_{n} \hookrightarrow \mathbb{I}_{n,0} \twoheadrightarrow \C
\end{equation*}
corresponding to the unital $*$-homomorphism from $\C$ to $\mathsf{M}_{n}$.  We let, moreover, $\mathfrak{e}_{n,i} = \mathfrak{mc}^{i} ( \mathfrak{e}_{n,0} )$, for all $i\in\N$, and we write
\begin{center}
$\mathfrak{e}_{n,1} \colon \mathsf{S} \C \hookrightarrow \mathbb{I}_{n,1} \twoheadrightarrow \mathbb{I}_{n,0}$\\
$\mathfrak{e}_{n, i } \colon \mathsf{S} \mathbb{I}_{n,i-2} \hookrightarrow \mathbb{I}_{n, i} \twoheadrightarrow \mathbb{I}_{n, i-1}, \ \text{for $i \geq 2$}.$ 
\end{center}

Similarly, we set $\mathfrak{f}_{1,0} \colon  \C \overset{ \id }{ \hookrightarrow } \C \twoheadrightarrow 0$ and $\mathfrak{f}_{n,0} \colon \mathbb{I}_{n,0} \overset{\id}{ \hookrightarrow } \mathbb{I}_{n,0} \twoheadrightarrow 0$, for all $n \in \N_{ \geq 2}$.  Moreover, we set $\mathfrak{f}_{n,i} = \mathfrak{mc}^{i} ( \mathfrak{f}_{n,0} )$ for all $n \in \N$ and all $i \in \N$.
	\end{defin}

\begin{defin}\identicalto{\cite[Def.~1.4]{err:idealkthy} --- although $3,4,5$ is missing there}
For each extension $e$ of separable $C^{*}$-algebras, we define \bolddefine{ideal-related $K$-theory with coefficients}, $\underline{K}_{\mathcal{E} } (e)$, of $e$ to be the (graded) group
\begin{equation*}
\underline{K}_{\mathcal{E} } ( e) = \bigoplus_{ i = 0 }^{5} \left( \kkE( \mathfrak{f}_{1, i } , e ) \oplus \bigoplus_{ n = 2}^{ \infty } \kkE ( \mathfrak{e}_{n,i}, e) \oplus \kkE ( \mathfrak{f}_{n,i} , e )  \right).
\end{equation*}
A homomorphism $\alpha$ from $\kE ( e_{1} )$ to $\kE ( e_{2} )$ is a group homomorphism respecting the direct sum decomposition and the natural homomorphisms induced by the elements $\kkE ( e, e' )$, where $e$ and $e'$ are in $\setof{ \mathfrak{e}_{n,i} , \mathfrak{f}_{n,i},\mathfrak{f}_{1,i} }{ n \in \N_{ \geq 2}, i = 0,1,2,3,4,5}$.  The set of homomorphisms from $\kE( e_{1} )$ to $\kE ( e_{2} )$ will be denoted by $\Hom_{\Lambda } ( \kE ( e_{1} ), \kE ( e_{2} ) )$.

Let $x \in \kk_{ \mathcal{E} } ( e_{1} , e_{2} )$.  Then $x$ induces an element of $\Hom_{\Lambda} ( \kE ( e_{1} ), \kE ( e_{2} ) )$ by 
\begin{align*}
&y \in \kkE ( \mathfrak{f}_{n, i} , e_{1} ) \mapsto y \times x \in \kkE ( \mathfrak{f}_{n, i} , e_{2} ),\quad n\in\N, \\
&y \in \kkE ( \mathfrak{e}_{n,i} , e_{1} ) \mapsto y \times x \in \kkE ( \mathfrak{e}_{n,i} , e_{2} ),\quad n\in\N_{\geq 2}.
\end{align*}
Hence, if $\ftn{ \phi }{ e_{1} }{ e_{2} }$ is a homomorphism, then $\phi$ induces an element $\kE ( \phi ) \in \Hom_{\Lambda} ( \kE ( e_{1} ), \kE ( e_{2} ) )$. In this way, $\kE$ becomes a functor on the category of extensions. 
\end{defin}

\begin{lemma}\label{l:naturalmap}
Let $e_{1}$ and $e_{2}$ be extensions of separable $C^{*}$-algebras.  Then there is a natural homomorphism, 
\begin{equation*}
\ftn{ \Gamma_{e_{1}, e_{2} } }{ \kkE ( e_{1} , e_{2} ) }{ \Hom_{ \Lambda } ( \underline{K}_{\mathcal{E} } ( e_{1} ), \underline{K}_{\mathcal{E}} ( e_{2} ) ) }.
\end{equation*}
\end{lemma}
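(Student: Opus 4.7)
The plan is to take the construction already outlined in the definition preceding the lemma and verify that it actually lands in $\Hom_{\Lambda}$, is additive in $x$, and is natural in both variables. All of this rests on the bilinearity, associativity, and functoriality of Kirchberg's ideal-related Kasparov product.

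\textbf{Construction.} For $x \in \kkE(e_1, e_2)$ I would define $\Gamma_{e_1,e_2}(x)$ summand by summand via the formulae displayed immediately above the lemma: on the summand $\kkE(\mathfrak{f}_{n,i}, e_1)$ or $\kkE(\mathfrak{e}_{n,i}, e_1)$ of $\kE(e_1)$, send $y$ to $y \times x$ in the corresponding summand of $\kE(e_2)$. By construction this preserves the direct-sum decomposition and the grading.

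\textbf{Compatibility with $\Lambda$.} The key point is that $\Gamma_{e_1,e_2}(x)$ must commute with every natural transformation coming from an element $z \in \kkE(\mathfrak{a}, \mathfrak{b})$, where $\mathfrak{a}, \mathfrak{b}$ are two of the test extensions $\mathfrak{e}_{n,i}, \mathfrak{f}_{n,i}, \mathfrak{f}_{1,i}$. Given such $z$ and $y \in \kkE(\mathfrak{b}, e_1)$, the required equality
\[
(z \times y) \times x \;=\; z \times (y \times x)
\]
is exactly associativity of the ideal-related Kasparov product. Hence $\Gamma_{e_1,e_2}(x) \in \Hom_{\Lambda}(\kE(e_1), \kE(e_2))$.

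\textbf{Additivity and naturality.} That $\Gamma_{e_1,e_2}$ is a group homomorphism is immediate from right-bilinearity, $y \times (x_1 + x_2) = y \times x_1 + y \times x_2$, applied summand by summand. For naturality, given homomorphisms $\phi \colon e_1' \to e_1$ and $\psi \colon e_2 \to e_2'$ one checks that the square
\[
\Gamma_{e_1',e_2'}(\kkE(\phi)(x)\kkE(\psi)) \;=\; \kE(\psi) \circ \Gamma_{e_1,e_2}(x) \circ \kE(\phi)
\]
commutes, which again reduces to associativity of the Kasparov product together with the fact that $\kE(\phi)$ and $\kE(\psi)$ are themselves induced by Kasparov products with $[\phi]$ and $[\psi]$.

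\textbf{Expected obstacle.} There is essentially no technical obstacle here; the statement is a formal consequence of the properties of $\kkE$ together with the way $\kE$ and $\Hom_{\Lambda}$ are set up. The only thing that has to be handled with care is bookkeeping: one must verify that the Kasparov products stay within the correct graded summands and that the formulae are in accord with the conventions of \cite{err:idealkthy}. The substantive content, namely that $\Gamma_{e_1,e_2}$ is an \emph{isomorphism} under suitable hypotheses, is reserved for the main theorem later in the paper.
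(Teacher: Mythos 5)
Your proposal is correct and follows essentially the same route as the paper: define $\Gamma_{e_1,e_2}(x)=(-)\times x$ summand by summand, note that compatibility with the natural transformations from $\kkE(\mathfrak{a},\mathfrak{b})$ is just associativity of the ideal-related Kasparov product, and that additivity and naturality follow from bilinearity and functoriality. The paper's own proof is a condensed version of exactly this argument.
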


\begin{proof}
Let $x \in \kkE ( e_{1} , e_{2} )$.  Then $\ftn{ (-)\times x }{ \kkE ( \mathfrak{f}_{n,i}, e_{1} ) }{ \kkE ( \mathfrak{f}_{n,i} , e_{2} ) }$ and $\ftn{ (-)\times x}{ \kkE ( \mathfrak{e}_{n,i} , e_{1} ) }{ \kkE ( \mathfrak{e}_{n,i}, e_{2} ) }$ are group homomorphisms which respect the natural homomorphisms induced by the elements $\kkE^{j} ( e, e' )$ for $j=0,1$, where $e$ and $e'$ are in $\setof{ \mathfrak{e}_{n,i}, \mathfrak{f}_{n,i}, \mathfrak{f}_{1,i} }{ n \in \N_{\geq 2}, i = 0,1,2,3,4,5}$.  Hence, $\Gamma_{ e_{1} , e_{2} } ( x ) = (-) \times x \in \Hom_{ \Lambda } ( \underline{K}_{\mathcal{E} } ( e_{1} ), \underline{K}_{\mathcal{E}} ( e_{2} ) )$.  One can check that $\Gamma_{e_{1}, e_{2}}$ is a natural homomorphism.
 \end{proof}

\begin{defin}
Let $\ftn{ \phi }{ e_{1} }{ e_{2} }$ be a homomorphism where $e_{1}$ and $e_{2}$ are extensions of $C^{*}$-algebras.  Note that $\phi$ induces $\kE ( \phi ) \in \Hom_{ \Lambda } ( \kE ( e_{1} ) , \kE ( e_{2} ) )$.  Hence, for every extension $e$ of \cstar-algebras, $\phi$ induces a homomorphism $\ftn{ \phi^{*} }{ \Hom_{ \Lambda } ( \kE ( e_{2} ) , \kE ( e ) ) }{ \Hom_{ \Lambda } ( \kE ( e_{1} ) , \kE ( e ) ) }$ by $\phi^{*} ( \alpha ) = \alpha \circ \kE ( \phi )$. 
\end{defin}

\begin{defin}\similarto{\cite[Def.~6.3]{err:idealkthy}}
For each extension $e$ of separable $C^{*}$-algebras, we define \bolddefine{ideal-related $K$-theory}, $\ksix ( e )$, of $e$ to be the (graded) group
\begin{equation*}
\ksix ( e ) = \bigoplus_{ i = 0 }^{5} \kkE( \mathfrak{f}_{1, i } , e )
\end{equation*}
A homomorphism $\ftn{ \alpha }{ \ksix ( e_{1} ) }{ \ksix( e_{2} ) }$ is a group homomorphism respecting the direct sum decomposition and the natural homomorphisms induced by the elements $\kkE^{j} ( e, e' )$ for $j=0,1$, where $e$ and $e'$ are in $\setof{ \mathfrak{f}_{1,i} }{ i = 0,1,2,3,4,5}$.  The set of homomorphism between $\ksix ( e_{1} )$ to $\ksix ( e_{2} )$ will be denoted by $\Hom_{ \mathrm{six}  } ( \ksix ( e_{1} ) , \ksix ( e_{2} ) )$.
\end{defin}

The definition of $\ksix ( e_{1} )$ is different from the definitions in \cite{bonkat} and \cite{err:idealkthy}, but a computation shows that there are natural isomorpisms between the different definitions.

\begin{remar}\identicalto{\cite[Rem.~6.6]{err:idealkthy}}
For extensions $e_1:\ext{\A_0}{\A_1}{\A_2}$ and
$e_2:\ext{\B_0}{\B_1}{\B_2}$ of separable $C^{*}$-algebras, we have natural homomorphisms 
$G_i\colon\kkE(e_1,e_2)\longrightarrow\kk(\A_i,\B_i)$, for $i=0,1,2$.  As in the proof of \cite[Satz~7.5.6]{bonkat} the obvious diagram 
$$\def\objectstyle{\scriptstyle}
\def\labelstyle{\scriptstyle}
\xymatrix{
  \operatorname{Ext}_{\mathrm{six}}(\ksix(e_1),\ksix(\mathsf{S}e_2))\ar@{^(->}[r]\ar[d]
  & \kkE(e_1,e_2)\ar@{->>}[r]\ar[d]^{G_i}
  & \Homsix(\ksix(e_1),\ksix(e_2))\ar[d]\\
  \operatorname{Ext}(K_0(\A_i),K_1(\B_i))
  \oplus\operatorname{Ext}(K_1(\A_i),K_0(\B_i))
  \ar@{^(->}[r]
  & \kk(\A_i,\B_i)\ar@{->>}[r]
  & \Hom(K_0(\A_i),K_0(\B_i))
  \oplus\Hom(K_1(\A_i),K_1(\B_i))   
  }$$
commutes and is natural in $e_2$, for $i=0,1,2$ 
--- provided that $e_1$ belongs to the UCT class considered
by Bonkat.\end{remar}

\begin{remar}
  The functor \kE contains both the usual $K$-theory and 
  the $\mathrm{mod}-n$ $K$-theory. 
  That is, we can recover the functors $K_0$, $K_1$, $K_0(-;\Z_n)$,
  and $K_1(-;\Z_n)$
  restricted to the ideal, the extension algebra, and the quotient. 

  Using the above diagram, we get canonical
  natural isomorphisms
$$\xymatrix@R=12pt{
    \kkE(\mathfrak{f}_{1,i},e)\longrightarrow\kk(\C,\A_i)\cong K_0(\A_i)&
    \kkE(\mathfrak{f}_{1,i+3},e)\longrightarrow\kk(\mathsf{S}\C,\A_i)\cong K_1(\A_i),\\
    \kkE(\mathfrak{f}_{n,i},e)\longrightarrow\kk(\Io,\A_i)\cong K_0(\A_i;\Z_n)&
    \kkE(\mathfrak{f}_{n,i+3},e)\longrightarrow\kk( \mathsf{S} \Io,\A_i)\cong K_1(\A_i;\Z_n),}$$
  for all $n\in\N_{\geq 2}$, and all $i=0,1,2$.

  Since the Bockstein operations, as defined in \cite{multcoeff}, are defined as
  natural transformations given by Kasparov products, also the total
  $K$-theory with coefficients, $\underline{K}$, is included in the
  invariant. 
\end{remar}

\begin{defin}\similarto{\cite[Def.~7.2]{err:idealkthy} (only first part)}
Set $F_{1,i}^{e} = \kkE ( \mathfrak{f}_{1,i} , e )$, $F_{n,i}^{e} = \kkE ( \mathfrak{f}_{n,i } , e )$, and $H_{n,i}^{e} = \kkE ( \mathfrak{e}_{n,i} , e )$ for all $n \in \N_{\geq 2}$ and for all $i = 0 , 1, 2, 3, 4, 5$.  For convenience, we will identify indices modulo $6$, \ie, we write $F_{n, 6 }^{e} = F_{n,0}^{e}$, $F_{n,7}^{e} = F_{n,1}^{e}$ etc. 

For $\alpha \in \Hom_{ \Lambda } ( \kE ( e_{1} ) , \kE ( e_{2} ) )$, set $\ftn{ \alpha_{ \F{1}{ i}} = \alpha \vert_{ F_{1,i}^{e_{1} } } }{ F_{1,i}^{ e_{1} } }{ F_{1,i}^{e_{2} } }$, $\ftn{ \alpha_{ \F{n}{ i}} = \alpha \vert_{ F_{n,i}^{e_{1} } } }{ F_{n,i}^{ e_{1} } }{ F_{n,i}^{e_{2} } }$, and $\ftn{ \alpha_{ \e{n}{i} }  = \alpha \vert_{ H_{n,i}^{e_{1}} } }{ H_{n,i}^{ e_{1}} }{ H_{n,i}^{e_{2}} }$, for all $n\in\N_{\geq 2}$ and all $i=0,1,2,3,4,5$. For all $n \in \N_{\geq 2}$ and $i=0,1,2,3,4,5$, we have homomorphisms
$$\xymatrix@R=12pt@!C=36pt{ F_{1, i-1}^{e} \ar[r]^{ f_{1,i-1}^{e} } & F_{1,i}^{e} \ar[r]^{f_{1,i}^{e}} & F_{1, i+1}^{e} & 
F_{1,i+1}^{e} \ar[r]^-{ h_{n,i,e}^{1,1, in } } & H_{n,i}^{e} \ar[r]^-{h_{n,i, e}^{ 1,1, out}} & F_{1, i+3}^{e} \\
F_{n, i-1}^{e} \ar[r]^{ f_{n,i-1}^{e} } & F_{n,i}^{e} \ar[r]^{f_{n,i}^{e}} & F_{n, i+1}^{e} & 
F_{n,i}^{e} \ar[r]^-{ h_{n,i,e}^{n,1, in } } & H_{n,i}^{e} \ar[r]^-{h_{n,i, e}^{ n,1, out}} & F_{1, i+2}^{e}   \\
F_{1, i }^{e} \ar[r]^-{\rho_{n,i}^{e} }  & F_{n,i}^{e} \ar[r]^-{\beta_{n,i}^{e}} & F_{1, i+3}^{e} & 
F_{1,i+2}^{e} \ar[r]^-{ h_{n,i,e}^{1,n, in } } & H_{n,i}^{e} \ar[r]^-{h_{n,i, e}^{ 1,n, out}} & F_{n, i+1}^{e} }
$$
as defined in \cite{err:idealkthy}.
\end{defin}

\begin{defin}\identicalto{\cite[Def.~7.13]{err:idealkthy}}
For each $n \in \N$, we set $\tilde{f}_{n, i }^{e} = f_{n,i}^{e}$ for $i = 1, 2, 4, 5$ and $\tilde{f}_{n,i}^{e} = - f_{n,i}^{e}$ for $i = 0, 3$.
\end{defin}

\begin{theor}\label{err:idealkthy}(Theorem 7.14 of \cite{err:idealkthy})
Let  $e$ be an extension of separable $C^{*}$-algebras.  For all $n \in \N$ and for all $i = 0, 1, 2, 3, 4, 5$, 
$$\xymatrix{ F_{n,i-1}^{e} \ar[rr]^-{ f_{n,i-1}^e } & & F_{n,i}^{e} \ar[rr]^-{ f_{n,i}^e } & & F_{n,i+1}^{e} }$$ 
is exact.  For all $n \in \N_{ \geq 2}$ and for all $i = 0,1,2,3,4,5$, 
$$\begin{array}{lcr}
\xymatrix@!C=48pt{
F_{1,i+1}^{e} \ar[r]^{h_{n,i,e}^{1,1, in } } & H_{n,i}^{e} \ar[r]^-{h_{n,i,e}^{1,1,out}} & F_{1, i+3}^{e} \ar[d]_{ n f_{1, i+3}^e } \\
F_{1,i}^{e} \ar[u]_{n f_{1, i}^e } & H_{n,i+3}^{e} \ar[l]^-{ h_{n, i+3, e}^{1,1, out } } & F_{1, i+4}^{e} \ar[l]^-{h_{n,i+3, e}^{1, 1, in} }
}
& 
& \xymatrix@!C=48pt{
F_{n,i}^{e} \ar[r]^-{h_{n,i,e}^{n,1, in } } & H_{n,i}^{e} \ar[r]^-{h_{n,i,e}^{n,1,out}} & F_{1, i+2}^{e} \ar[d]_{ f_{n, i+2}^e \circ \rho_{n, i+2}^{e} } \\
F_{1,i+5}^{e} \ar[u]_{f_{n, i+5}^e \circ \rho_{n, i+5 }^{e} } & H_{n,i+3}^{e} \ar[l]^-{ h_{n, i+3, e}^{n,1, out } } & F_{n, i+3}^{e} \ar[l]^-{h_{n,i+3, e}^{n, 1, in} }
} \\
\vcenter{\xymatrix@!C=48pt{
F_{1,i+2}^{e} \ar[r]^-{h_{n,i,e}^{1,n, in } } & H_{n,i}^{e} \ar[r]^-{h_{n,i,e}^{1,n,out}} & F_{n, i+1}^{e} \ar[d]_{ \beta_{n , i+2 }^{ e } \circ f_{n, i+1}^e  } \\
F_{n,i+4}^{e} \ar[u]_{\beta_{n, i+5 }^{ e }\circ f_{n, i+4}^e  } & H_{n,i+3}^{e} \ar[l]^-{ h_{n, i+3, e}^{1,n, out } } & F_{1, i+5}^{e} \ar[l]^-{h_{n,i+3, e}^{1, n, in} }
}}
 & \text{and} & 
\vcenter{\xymatrix@!C=48pt{F_{1, i}^{e} \ar[r]^-{ \rho_{n,i }^{e} } & F_{n,i}^{e} \ar[r]^{ \beta_{n,i }^{ e } } & F_{1, i+3}^{e} \ar[d]_{ \times n } \\
F_{1,i}^{e} \ar[u]_{ \times n} & F_{n, i+3}^{e} \ar[l]^-{ \beta_{n,i+3 }^{ e}} & F_{1, i+3}^{e} \ar[l]^{ \rho_{n, i+3 }^{e} } 
}}
\end{array}$$ 
are exact, and moreover, all the three diagrams
\begin{equation}
\vcenter{
\xymatrix{
F_{1,i}^{e} \ar[r]^-{ \tilde{f}_{1, i}^{ e } } \ar[d]_{ \rho_{n,i}^{e}} & F_{1, i+1}^{e} \ar[d]_(.4){h_{n, i, e}^{1,1, in} } \ar[rd]^-{ \tilde{f}_{1,i+1}^{e} } \\
F_{n, i}^{e} \ar[r]^-{ h_{n,i, e}^{n,1, in } } \ar[rd]_-{\beta_{n,i }^{ e} }  & H_{n, i}^{e}  \ar[r]_-{h_{n,i, e}^{n,1, out} } \ar[d]^(.6){h_{n,i, e}^{1,1,out} } & F_{1, i+2}^{e} \ar[d]^{\tilde{f}_{1, i+2}^{e} } \\
			& F_{1,i+3}^{e} \ar[r]_{\times n } & F_{1, i+3}^{e}
}
}
\label{d1} \tag{$1$}
\end{equation}
\begin{equation}
\vcenter{
\xymatrix{
F_{1,i+1}^{e} \ar[r]^-{ \times n } \ar[d]_{ \tilde{f}_{1,i+1}^{e}} & F_{1, i+1}^{e} \ar[d]_(.4){h_{n, i, e}^{1,1, in} } \ar[rd]^-{ \rho_{n,i+1}^{e} } \\
F_{1, i+2}^{e} \ar[r]^-{ h_{n,i, e}^{1,n, in } } \ar[rd]_-{\tilde{f}_{1,i+2}^{ e} }  & H_{n, i}^{e}  \ar[r]_-{h_{n,i, e}^{1,n, out} } \ar[d]^(.6){h_{n,i, e}^{1,1,out} } & F_{n, i+1}^{e} \ar[d]^{-\beta_{n, i+1}^{e} } \\
			& F_{1,i+3}^{e} \ar[r]_{ \tilde{f}_{1, i+3}^{e} } & F_{1, i+4}^{e}
}
}
\label{d2} \tag{$2$}
\end{equation}
\begin{equation}
\vcenter{
\xymatrix{
F_{n,i+5}^{e} \ar[r]^-{ \tilde{f}_{n, i+5}^{ e } } \ar[d]_{ - \beta_{n,i+5 }^{ e}} & F_{n, i}^{e} \ar[d]_(.4){h_{n, i, e}^{n,1, in} } \ar[rd]^-{ \tilde{f}_{n,i}^{e} } \\
F_{1, i+2}^{e} \ar[r]^-{ h_{n,i, e}^{1,n, in } } \ar[rd]_-{ \times n }  & H_{n, i}^{e}  \ar[r]_-{h_{n,i, e}^{1,n, out} } \ar[d]^(.6){h_{n,i, e}^{n,1,out} } & F_{n, i+1}^{e} \ar[d]^{\tilde{f}_{n, i+1}^{ e} } \\
			& F_{1,i+2}^{e} \ar[r]_{ \rho_{n,i+2}^{e} } & F_{n, i+2}^{e}
}
}
\label{d3} \tag{$3$}
\end{equation}
commute.
\end{theor}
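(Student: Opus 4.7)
The plan is to derive the theorem from two foundational facts about Kirchberg's ideal-related $\kk$-theory: first, for any extension $e$ of separable $C^*$-algebras the functor $\kkE(-,e)$ converts each mapping cone sequence of extensions into a long exact sequence of abelian groups; second, the $\mathfrak{e}_{n,i}$ and $\mathfrak{f}_{n,i}$ are by construction iterated mapping cones of $\mathfrak{e}_{n,0}$ and $\mathfrak{f}_{n,0}$, with six-fold periodicity following from Bott periodicity. Every displayed exact sequence in the theorem should then be obtained by applying $\kkE(-,e)$ to a suitable iterated mapping cone construction and identifying the resulting cyclic six-term sequence in the labels $F_{n,i}^e$ and $H_{n,i}^e$.

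For the exactness of $F_{n,i-1}^e \to F_{n,i}^e \to F_{n,i+1}^e$, I apply $\kkE(-,e)$ to the Puppe sequence coming from $\mathfrak{f}_{n,i+1}=\mathfrak{mc}(\mathfrak{f}_{n,i})$; the maps $f_{n,i}^e$ are those induced by the canonical morphisms in this Puppe sequence, and six-fold periodicity converts the long exact sequence into a cyclic hexagon. For each of the four hexagons involving $H_{n,i}^e$, the strategy is to exhibit the corresponding $\mathfrak{e}_{n,i}$ as a mapping cone of a specific morphism between $\mathfrak{f}$-type extensions. For instance, $\mathfrak{e}_{n,0}$ is by definition the mapping cone of the unital embedding $\C \to \mathsf{M}_n$, so the first hexagon arises by iterating $\mathfrak{mc}$ on this map, while the other three hexagons arise from natural morphisms of the form $\mathfrak{f}_{1,j}\to\mathfrak{f}_{n,j}$, $\mathfrak{f}_{n,j}\to\mathfrak{f}_{1,j+3}$, and their suspensions, whose mapping cones recover the $\mathfrak{e}_{n,i}$ at the appropriate indices. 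Applying $\kkE(-,e)$ to each distinguished triangle then yields the displayed six-term cyclic exact sequence, once the $h$-labelled arrows are identified with the maps induced in the corresponding Puppe construction.

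The three commutative diagrams (1), (2), (3) are the most delicate part. Each reduces to an identity in $\kkE$ between specific distinguished elements of groups such as $\kkE(\mathfrak{e}_{n,i},\mathfrak{f}_{m,j})$ or $\kkE(\mathfrak{f}_{n,i},\mathfrak{e}_{m,j})$, encoding how the Bockstein-type maps $\rho_{n,i}^e$ and $\beta_{n,i}^e$, the $h$-maps, and multiplication by $n$ interact with the connecting maps $f_{n,i}^e$ and their signed variants $\tilde f_{n,i}^e$. I expect the main obstacle to be exactly this bookkeeping: each commuting face can in principle be checked by constructing an explicit morphism between two representing extensions and invoking naturality of the mapping cone construction, but the cyclic indexing and the sign conventions distinguishing $\tilde f_{n,i}^e$ from $f_{n,i}^e$ must be handled consistently. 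Once compatible choices of representing morphisms have been fixed, the required equalities reduce to standard associativity and functoriality of the Kasparov product together with the behaviour of mapping cones under composition, as carried out in detail in Section~7 of \cite{err:idealkthy}.
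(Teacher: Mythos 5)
First, note that this paper does not prove the statement at all: it is imported verbatim as Theorem~7.14 of \cite{err:idealkthy}, so there is no in-paper argument to compare yours against. Judged as a blind proof, your outline does identify the correct machinery --- the objects $\mathfrak{e}_{n,i}$, $\mathfrak{f}_{n,i}$ are iterated mapping cones, each hexagon should come from applying $\kkE(-,e)$ in the first variable to a suitable short exact sequence (or mapping cone triangle) of extensions with coherent completely positive splitting, and indices are read modulo $6$ because $\mathfrak{mc}^2$ is $\kkE$-equivalent to $\mathsf{S}$ together with Bott periodicity. This is indeed the strategy of the cited source.

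However, as written the proposal has a genuine gap: everything that is actually hard is asserted rather than proved. Concretely, (a) the content of the four hexagons is not merely that \emph{some} six-term exact sequence exists, but that its boundary maps are the specific composites $n f_{1,i}^{e}$, $f_{n,i+2}^{e}\circ\rho_{n,i+2}^{e}$, $\beta_{n,i+2}^{e}\circ f_{n,i+1}^{e}$, and $\times n$; identifying the abstract connecting homomorphism of the Puppe sequence with these named maps requires constructing explicit morphisms of extensions and computing Kasparov products, and your text only says this identification ``should'' hold. (b) The three commutative diagrams, including the sign rule distinguishing $\tilde{f}_{n,i}^{e}$ from $f_{n,i}^{e}$ (a minus sign exactly for $i=0,3$) and the sign on $\beta_{n,i+1}^{e}$ in diagram~(2), are the part you yourself flag as most delicate, and you then defer them to ``Section~7 of \cite{err:idealkthy}'' --- which is circular, since that is precisely the result being proved. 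A complete argument must exhibit, for each face, the two representing cycles in groups such as $\kkE(\mathfrak{e}_{n,i},\mathfrak{f}_{1,j})$ and verify their equality (with sign); without that, the statement is reduced to a plausible bookkeeping claim rather than established. You should also justify that the first-variable long exact sequence applies to the triangles you use (the analogue for the first variable of the splitting hypothesis recalled in Remark~\ref{r:exact}), which is automatic here only because the algebras involved are nuclear.
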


\begin{remar}\label{r:exact}
By Korollar~3.4.6 of \cite{bonkat}, for every short exact sequence of extensions of separable $C^{*}$-algebras $e_{0} \overset{\phi}{\hookrightarrow} e_{1} \overset{\psi}{\twoheadrightarrow} e_{2}$ with completely positive coherent splitting, there exists a six term exact sequence
\begin{align*}
\xymatrix{
\kkE( e, e_{0} ) \ar[r]^{\phi_{*}} & \kkE ( e, e_{1} ) \ar[r]^{ \psi*} & \kkE ( e , e_{2} ) \ar[d]^{ \delta } \\
\kkE( e, e_{2} ) \ar[u]^{ \delta } & \kkE( e, e_{1} ) \ar[l]^{ \psi_{*}} & \kkE ( e, e_{0} ) \ar[l]^{\phi_{*}}
}
\end{align*}
of abelian groups.  Therefore, with $e = \mathfrak{e}_{n,i}$, the complex 
\begin{equation*}
\xymatrix{
H_{ n,i}^{ e_{0} } \ar[rr]^-{  \kE ( \phi )_{ \e{ n }{  i } } } & & H_{n, i}^{e_{1}} \ar[rr]^{ \kE ( \psi )_{ \e{ n  }{ i } } } & & H_{n,1}^{e_{2}}
}
\end{equation*}
is exact, for each $i=0,1,2,3,4,5$.
\end{remar}

\begin{lemma}\label{l:freekthy}
Let $e_{1}$ and $e_{2}$ be extensions of separable, nuclear $C^{*}$-algebras in the bootstrap category $\mathcal{N}$. 
Assume that the $K$-theory of $e_{1}$ is free, \ie, $F^{e_1}_{1,i}$ is free for all $i=0,1,2,3,4,5$.  Then the natural homomorphisms
\begin{equation*}
\Gamma_{e_{1}, e_{2} } \colon \kkE ( e_{1} , e_{2} ) \rightarrow  \Hom_{ \Lambda } ( \underline{K}_{\mathcal{E} } ( e_{1} ), \underline{K}_{\mathcal{E}} ( e_{2} ) )
\end{equation*}
and 
\begin{equation*}
\Delta_{ e_{1}, e_{2} } \colon  \Hom_{ \Lambda } ( \underline{K}_{\mathcal{E} } ( e_{1} ), \underline{K}_{\mathcal{E}} ( e_{2} ) ) \rightarrow  \Hom_{ \mathrm{six} } ( K_{\mathrm{six} } ( e_{1} ), K_{\mathrm{six}} ( e_{2} ) )
\end{equation*}
are isomorphisms.
\end{lemma}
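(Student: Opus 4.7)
The plan is to reduce the statement to injectivity of $\Delta_{e_1,e_2}$ and then handle that case-by-case using the exact sequences of Theorem~\ref{err:idealkthy}. Bonkat's UCT, as recalled in the remark above, gives an exact sequence $\Extsix(\ksix(e_1),\ksix(\mathsf{S}e_2))\hookrightarrow\kkE(e_1,e_2)\twoheadrightarrow\Homsix(\ksix(e_1),\ksix(e_2))$, and since each $F^{e_1}_{1,i}$ is free the $\Ext$-term vanishes. Hence the natural map $\kkE(e_1,e_2)\to\Homsix(\ksix(e_1),\ksix(e_2))$ is an isomorphism. By construction (right Kasparov multiplication, then restriction to $\ksix(e_1)$) this natural map coincides with $\Delta_{e_1,e_2}\circ\Gamma_{e_1,e_2}$, so $\Gamma_{e_1,e_2}$ is injective and $\Delta_{e_1,e_2}$ is surjective. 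It therefore suffices to show $\Delta_{e_1,e_2}$ is injective.

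So fix $\alpha\in\Hom_\Lambda(\kE(e_1),\kE(e_2))$ with $\alpha_{\F{1}{i}}=0$ for all $i$. To see that $\alpha_{\F{n}{i}}=0$ for every $n\geq 2$, invoke the cyclic exact sequence $F^{e_1}_{1,i}\xrightarrow{\times n}F^{e_1}_{1,i}\xrightarrow{\rho^{e_1}_{n,i}}F^{e_1}_{n,i}\xrightarrow{\beta^{e_1}_{n,i}}F^{e_1}_{1,i+3}\xrightarrow{\times n}F^{e_1}_{1,i+3}$ from Theorem~\ref{err:idealkthy}: freeness of $F^{e_1}_{1,i+3}$ makes $\times n$ injective there, so $\beta^{e_1}_{n,i}=0$ and $\rho^{e_1}_{n,i}$ is surjective; naturality of $\rho$ under $\alpha$ now forces $\alpha_{\F{n}{i}}=0$.

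The main obstacle is to prove $\alpha_{\e{n}{i}}=0$. The idea is to establish the decomposition $H^{e_1}_{n,i}=\im h^{1,n,in}_{n,i,e_1}+\im h^{1,1,in}_{n,i,e_1}$. Given $x\in H^{e_1}_{n,i}$, the first exact sequence in Theorem~\ref{err:idealkthy} gives $h^{1,1,out}(x)\in\ker(nf^{e_1}_{1,i+3})$; torsion-freeness of $F^{e_1}_{1,i+4}$ and six-term exactness of the usual sequence for $e_1$ then identify this with $\ker f^{e_1}_{1,i+3}=\im f^{e_1}_{1,i+2}$. Choose $z\in F^{e_1}_{1,i+2}$ with $\tilde f^{e_1}_{1,i+2}(z)=h^{1,1,out}(x)$. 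Diagram~\eqref{d2}, which asserts $h^{1,1,out}\circ h^{1,n,in}=\tilde f^{e_1}_{1,i+2}$, then gives $x-h^{1,n,in}(z)\in\ker h^{1,1,out}=\im h^{1,1,in}$, so $x=h^{1,n,in}(z)+h^{1,1,in}(w)$ for some $w\in F^{e_1}_{1,i+1}$. Applying $\alpha$ and using that $\alpha$ vanishes on every $F^{e_1}_{1,*}$ yields $\alpha(x)=0$.

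Combining the three steps, $\Delta_{e_1,e_2}$ is injective, and together with the first paragraph this makes both $\Gamma_{e_1,e_2}$ and $\Delta_{e_1,e_2}$ isomorphisms. Conceptually, the freeness hypothesis forces every connecting map of the form $\times n$ or $\beta$ out of a group $F^{e_1}_{1,*}$ to vanish, so the six-term sequences governing $F^{e_1}_{n,i}$ and $H^{e_1}_{n,i}$ degenerate into presentations by the free groups $F^{e_1}_{1,*}$; the only delicate point is sorting out which incoming $h$-maps are needed to cover $H^{e_1}_{n,i}$, which is why Diagram~\eqref{d2} plays a decisive role.
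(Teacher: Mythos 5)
Your proposal is correct and follows essentially the same route as the paper: Bonkat's UCT reduces everything to injectivity of $\Delta_{e_1,e_2}$, freeness forces $\rho^{e_1}_{n,i}$ to be surjective so that $\alpha_{\F{n}{i}}=0$, and then one shows $H^{e_1}_{n,i}=\im h^{1,1,in}_{n,i,e_1}+\im h^{1,n,in}_{n,i,e_1}$ to kill $\alpha_{\e{n}{i}}$. The only (immaterial) difference is how you reach that decomposition: the paper lifts $h^{1,n,out}_{n,i,e_1}(x)$ through the surjection $\rho^{e_1}_{n,i+1}$ and uses the triangle $h^{1,n,out}\circ h^{1,1,in}=\rho_{n,i+1}$ of Diagram~\eqref{d2}, while you lift $h^{1,1,out}_{n,i,e_1}(x)$ through $\ker(nf^{e_1}_{1,i+3})=\im f^{e_1}_{1,i+2}$ and use the other triangle $h^{1,1,out}\circ h^{1,n,in}=\tilde f_{1,i+2}$ of the same diagram.
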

 
\begin{proof}
By the UCT of Bonkat \cite{bonkat}, the homomorphism $\kkE ( e_{1} , e_{2} ) \rightarrow \Hom_{\mathrm{six} } ( \ksix ( e_{1} ), \ksix ( e_{2} ) )$ is an isomorphism.  Hence, the first map is injective and the second is surjective.  Therefore, it is enough to prove that the map $\Hom_{ \Lambda } ( \underline{K}_{\mathcal{E} } ( e_{1} ), \underline{K}_{\mathcal{E} } ( e_{2} ) ) \rightarrow \Hom_{\mathrm{six} } ( \ksix ( e_{1} ), \ksix ( e_{2} ) )$ is injective.  

So assume that $\alpha \in \Hom_{ \Lambda } ( \underline{K}_{\mathcal{E} } ( e_{1} ), \underline{K}_{\mathcal{E} } ( e_{2} ) )$ and $\alpha$ is zero on $\ksix ( e_{1} )$.  Let $\mathfrak{A}_{0} \hookrightarrow \mathfrak{A}_{1} \twoheadrightarrow \mathfrak{A}_{2}$ denote the extension $e_{1}$.  Since $F_{1,i+3}^{e_1}$ is free, by the last six term exact sequence of Theorem \ref{err:idealkthy}, $\rho_{n,i}^{e_{1}}$ is surjective.  Hence, $\alpha_{\F{n}{i} } = 0$.   

Let $x \in H_{n, i}^{e_{1}}$.  Since $\rho_{n,i+1}^{ e_{1}}$ is surjective, there exists $w \in F_{1, i+1}^{e_{1}}$ such that $\rho_{n,i+1}^{e_{1} } (w) =  h_{n, i, e_{1}}^{1, n, out } (x)$.  By Diagram (\ref{d2}) in Theorem \ref{err:idealkthy}, 
\begin{equation*}
h_{n,i, e_{1} }^{ 1, n, out } ( h_{n,i, e_{1}}^{1,1, in} (w) ) = \rho_{n,i+1}^{ e_{1} } (w). 
\end{equation*}
Therefore, $x - h_{n,i,e_{1}}^{1,1, in } ( w ) \in \ker( h_{n,i,e_{1}}^{1,n,out} ) = \im ( h_{n,i,e_{1}}^{1,n,in} )$.  Thus, there exists $y \in F_{1, i+2}^{e_{1}}$ such that 
\begin{equation*}
x = h_{n,i,e_{1}}^{1,1, in } ( w ) + h_{n,i, e_{1} }^{1,n,in} (y).
\end{equation*} 
Hence, $\alpha_{ \e{n}{i}  } ( x ) = 0$.
\end{proof}

\begin{lemma}\label{l:suspension}
Let $e_{1} \colon \ext{\mathfrak{A}_{0}}{\mathfrak{A}_{1}}{\mathfrak{A}_{2}}$ and $e_{2} \colon \ext{\mathfrak{B}_{0}}{\mathfrak{B}_{1}}{\mathfrak{B}_{2}}$ be extensions of separable, nuclear $C^{*}$-algebras in the bootstrap category $\mathcal{N}$. Then $\Gamma_{e_{1}, e_{2}}$ is an isomorphism if and only if $\Gamma_{\mathsf{S}e_{1}, \mathsf{S}e_{2}}$ is an isomorphism.
\end{lemma}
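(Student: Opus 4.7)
My plan is to prove the biconditional by producing, for every pair of extensions $e_{1}$ and $e_{2}$, a natural commutative square
\[
\xymatrix@C=44pt{
\kkE(e_{1},e_{2}) \ar[r]^-{\mathsf{S}}_-{\cong} \ar[d]_{\Gamma_{e_{1},e_{2}}} & \kkE(\mathsf{S}e_{1},\mathsf{S}e_{2}) \ar[d]^{\Gamma_{\mathsf{S}e_{1},\mathsf{S}e_{2}}} \\
\Hom_{\Lambda}(\kE(e_{1}),\kE(e_{2})) \ar[r]^-{\sigma}_-{\cong} & \Hom_{\Lambda}(\kE(\mathsf{S}e_{1}),\kE(\mathsf{S}e_{2}))
}
\]
whose two horizontal arrows are isomorphisms. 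Once the square is in place, the two vertical arrows are simultaneously iso, which is exactly the content of the lemma.

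For the top arrow I would invoke Bonkat's version of Bott periodicity: the suspension endofunctor on extensions induces a natural isomorphism on $\kkE$, realised concretely as Kasparov product with an invertible suspension element. For the bottom arrow, the key input is the family of $\kkE$-equivalences $\mathsf{S}\mathfrak{e}_{n,i} \sim \mathfrak{e}_{n,i+3}$ and $\mathsf{S}\mathfrak{f}_{n,i} \sim \mathfrak{f}_{n,i+3}$, with indices taken modulo $6$. These follow from the inductive definition $\mathfrak{e}_{n,i} = \mathfrak{mc}^{i}(\mathfrak{e}_{n,0})$ together with the standard Puppe-type identity $\mathfrak{mc}^{3} \sim \mathsf{S}$ in $\kkE$. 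They assemble into a natural isomorphism $\tau_{e} : \kE(\mathsf{S}e) \overset{\cong}{\to} \kE(e)$ that cyclically shifts the internal $i$-grading by $3$, and the bottom arrow is then conjugation $\sigma(\alpha) = \tau_{e_{2}}^{-1} \circ \alpha \circ \tau_{e_{1}}$.

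Commutativity of the square is the bifunctoriality identity $\mathsf{S}(y \times x) = (\mathsf{S}y) \times (\mathsf{S}x)$ for Kasparov products, together with the fact that $\tau_{e}$ is itself implemented by Kasparov product with the same suspension element. The main technical obstacle I foresee is verifying that $\tau_{e}$ is genuinely $\Lambda$-linear, since this amounts to checking that every one of the structure maps $f_{n,i}^{e}$, $\rho_{n,i}^{e}$, $\beta_{n,i}^{e}$, and the various $h_{n,i,e}^{?,?,?}$ appearing in Theorem~\ref{err:idealkthy}, as well as the sign twists built into $\tilde{f}_{n,i}^{e}$, are intertwined by the index shift. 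However, since each of these structure maps is by construction a Kasparov product with a fixed morphism between the generating objects, this reduces to a finite amount of routine bookkeeping at the level of $\kkE$ between the $\mathfrak{e}_{n,\cdot}$'s and $\mathfrak{f}_{n,\cdot}$'s, and requires no further ingredient beyond the naturality of $\mathsf{S}$.
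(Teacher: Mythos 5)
Your skeleton is the same as the paper's: both arguments come down to a commutative square whose horizontal arrows (suspension on $\kkE$ and on $\Hom_{\Lambda}$) are isomorphisms, after which the two vertical $\Gamma$'s stand or fall together. Where you differ is in how the horizontal isomorphisms are obtained. The paper never invokes an invertible suspension element in $\kkE$ directly: it first shows that $\mathsf{S}$ is an isomorphism on $\Homsix(\ksix(e_1),\ksix(e_2))$ and on the derived functor by the factorization trick ($\mathsf{S}^2$ is an isomorphism there by ordinary Bott periodicity of $K$-theory, hence each factor $\mathsf{S}$ is), and then feeds this into Bonkat's UCT sequence and the five lemma to conclude that $\hat{\mathsf{S}}$ on $\kkE$ is an isomorphism; a further five-lemma argument handles $\Hom_{\Lambda}$. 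This is exactly where the bootstrap hypothesis in the statement gets used. You instead take as given that $\hat{\mathsf{S}}$ is the Kasparov product with an invertible suspension element in Bonkat's theory, and build the bottom arrow explicitly as conjugation by the degree-shift $\tau_e$ coming from $\mathsf{S}\mathfrak{e}_{n,i}\sim\mathfrak{e}_{n,i+3}$ and $\mathsf{S}\mathfrak{f}_{n,i}\sim\mathfrak{f}_{n,i+3}$; your $\Lambda$-linearity discussion for $\tau_e$ is sound, since $\Hom_{\Lambda}$ is defined by compatibility with \emph{all} elements of $\kkE$ between the generating objects, and conjugation by invertible elements permutes these. The payoff of your route would be a statement free of the bootstrap hypothesis; the cost is that its foundation --- the invertibility of the suspension element in $\kkE$ for arbitrary separable extensions, and its compatibility with the specific $\hat{\mathsf{S}}$ constructed in \cite{err:idealkthy} --- is asserted rather than proved, and it is precisely this point that the authors chose to establish indirectly via the UCT. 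If you cannot cite that periodicity statement for Bonkat's theory in the form you need, you should fall back on the paper's $\mathsf{S}^2$-plus-UCT argument for the top arrow; the rest of your proof then goes through unchanged.
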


\begin{proof}
Note that the homomorphism $\mathsf{S}$ from $\Homsix(\ksix(e_1),\ksix(e_2))$ to $\Homsix(\ksix(\mathsf{S}e_1),\ksix(\mathsf{S}e_2))$ is an isomorphism (since $\mathsf{S}^2$ from $\Homsix(\ksix(e_1),\ksix(e_2))$ to $\Homsix(\ksix(\mathsf{S}^2e_1),\ksix(\mathsf{S}^2e_2))$ is an isomorphism). The same is true for the derived functor. By Bonkat's UCT  and the five lemma, it follows that the homomorphism $\hat{\mathsf{S}}$ from $\kkE(e_1,e_2)$ to $\kkE(\mathsf{S}e_1,\mathsf{S}e_2)$ constructed in \cite{err:idealkthy} is an isomorphism. It also follows from the five lemma, that the homomorphism $\mathsf{S}$ from $\Hom_\Lambda(\underline{K}_{\mathcal{E}}(e_1),\underline{K}_{\mathcal{E}}(e_2))$ to $\Hom_\Lambda(\underline{K}_{\mathcal{E}}(\mathsf{S}e_1),\underline{K}_{\mathcal{E}}(\mathsf{S}e_2))$ is an isomorphism. 
\end{proof}

\section{Geometric projective resolutions for extensions of Kirchberg algebras}

In this section, we will -- for each  given cyclic six term exact sequence $(G_{i})_{i=0}^5$ of finitely generated abelian groups with $G_3=0$ and $G_5$ being a free abelian group -- construct a very specific, concrete projective resolution and realize it as coming from a short exact sequence of extensions of simple \cstar-algebras. 

First we note that the homomorphism that sends $f$ to $\diag( \overbrace{f, f, \dots, f}^{m} )$ induces a morphism from $\mathfrak{e}_{n,0}$ to $\mathfrak{e}_{nm, 0}$ and a morphism from $\mathfrak{f}_{n,0}$ to $\mathfrak{f}_{nm, 0}$.  Therefore, we have homomorphisms 
\begin{align*}
\ftn{\kappa_{n, mn, i }^{e} }{ F_{mn, i}^{e} }{ F_{n, i}^{e} } \\
\ftn{\omega_{n, mn, i}^{e} }{ H_{mn, i}^{e} }{ H_{n, i}^{e} }.  
\end{align*}

By identifying $\mathbb{I}_{nm,0}$ with the sub-$C^{*}$-algebra 
\begin{equation*}
\setof{ f \in C_{0} ( (0,1] , \mathsf{M}_{n} \otimes \mathsf{M}_{m} ) }{ f(1) \in \C 1_{ \mathsf{M}_{n } } \otimes \C 1_{ \mathsf{M}_{m} } }
\end{equation*}
we get morphisms from $\mathfrak{e}_{mn, 0}$ to $\mathsf{M}_{n} ( \mathfrak{e}_{m, 0 } )$ and from $\mathfrak{f}_{mn, 0}$ to $\mathsf{M}_{n} ( \mathfrak{f}_{m,0} )$.  Hence, we get homomorphisms
\begin{align*}
\ftn{\varkappa_{mn, m, i }^{e} }{ F_{m, i}^{e} }{ F_{nm, i}^{e} } \\
\ftn{\chi_{mn, m, i}^{e} }{ H_{m, i}^{e} }{ H_{nm, i}^{e} } . 
\end{align*}

\begin{lemma}\label{l:bmaps}
Let $e \colon \mathfrak{A}_{0} \hookrightarrow \mathfrak{A}_{1} \twoheadrightarrow \mathfrak{A}_{2}$ be an extension of separable, nuclear \cstar-algebras.  Then
\begin{align*}
&\kappa_{n,mn, i }^{e} \circ \rho_{nm, i}^{ e} = \rho_{n, i }^{ e} &\varkappa_{mn, m, i}^{e} \circ \rho_{m, i }^{ e } = n \rho_{mn, i }^{ e }  \\
&\beta_{n, i }^{ e } \circ \kappa_{n, mn, i}^{e} = m \beta_{nm, i }^{ e}  &\beta_{nm, i }^{ e } \circ \varkappa_{nm, m, i}^{e} = \beta_{m, i }^{ e}
\end{align*}
and the following diagrams 
$$\xymatrix{
    F_{1,i+2}^{e} \ar[r]^{h_{mn, i, e}^{1, mn, in } } \ar[d] & H_{mn, i}^{e} \ar[r]^{h_{mn, i, e}^{1, nm, out } } \ar[d]^{ \omega^{e}_{n, mn, i} } & F_{mn, i+1} \ar[d]^{ \kappa^{e}_{n, mn, i+1} } 
    & 
    F_{1,i+2}^{e} \ar[r]^{h_{m, i, e}^{1, m, in } } \ar[d] & H_{m,i}^{e} \ar[r]^{h_{m, i, e}^{1, m, out } } \ar[d]^{ \chi^{e}_{mn, m, i} }& F_{m, i+1} \ar[d]^{ \varkappa^{e}_{mn, m,i+1} } \\
    F_{1,i+2}^{e} \ar[r]_{h_{n, i, e}^{1, n, in } } & H_{n,i}^{e} \ar[r]_{h_{n, i, e}^{1, n, out } }  & F_{n, i+1}
    &
    F_{1,i+2}^{e} \ar[r]_{h_{mn, i, e}^{1, mn, in } } & H_{mn, i}^{e} \ar[r]_{h_{mn, i, e}^{1, nm, out } } & F_{mn, i+1} \\
    F_{1,i+1}^{e} \ar[r]^{h_{mn, i, e}^{1, 1, in} } \ar@{=}[d] & H_{mn, i}^{e}  \ar[r]^{h_{mn, i, e}^{1, 1 , out } } \ar[d]^{ \omega^{e}_{ n, mn, i} } & F_{1, i+3}  \ar[d]^{\times m} 
    &
    F_{1,i+1}^{e} \ar[r]^{h_{m, i, e}^{1, 1, in} } \ar[d]^{\times n} & H_{m, i}^{e}  \ar[r]^{h_{m, i, e}^{1, 1 , out } } \ar[d]^{ \chi_{ mn, m, i}^{e} } & F_{1, i+3}  \ar@{=}[d] \\
    F_{1,i+1}^{e} \ar[r]_{h_{n, i, e}^{1, 1, in} } &  H_{n, i}^{e}  \ar[r]_{h_{n, i, e}^{1, 1 , out } } & F_{1, i+3}  
    &
    F_{1,i+1}^{e} \ar[r]_{h_{mn, i, e}^{1, 1, in} } &  H_{mn, i}^{e}  \ar[r]_{h_{mn, i, e}^{1, 1 , out } } & F_{1, i+3}  
}$$
are commutative for $i=0,1,2,3,4,5$. 
\end{lemma}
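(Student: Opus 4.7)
The plan is to exhibit every map in the lemma as precomposition with a concrete Kasparov element coming from an honest \starhom of extensions at level $i=0$, and then deduce all the stated identities at the level of these \starhoms. Recall that $F_{n,i}^{e}=\kkE(\mathfrak{f}_{n,i},e)$ and $H_{n,i}^{e}=\kkE(\mathfrak{e}_{n,i},e)$, so the maps $\kappa^e,\varkappa^e,\omega^e,\chi^e$ are contravariant precompositions with the block-diagonal embedding $f\mapsto\diag(f,\ldots,f)$ and with the identification of $\mathbb{I}_{nm,0}$ as a subalgebra of $C_{0}((0,1],\mathsf{M}_{n}\otimes\mathsf{M}_{m})$ singled out just before the lemma. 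The maps $\rho^e,\beta^e$ and the six $h$-maps, as defined in \cite{err:idealkthy}, are likewise precompositions with specific morphisms (or mapping-cone zigzags) between $\mathfrak{f}_{1,0},\mathfrak{f}_{n,0}$ and $\mathfrak{e}_{n,0}$.

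First I would reduce the whole statement to the case $i=0$. The extensions $\mathfrak{e}_{n,i}$ and $\mathfrak{f}_{n,i}$ are defined as $\mathfrak{mc}^{i}$ of the $i=0$ extensions, and all the maps $\kappa^{e},\varkappa^{e},\omega^{e},\chi^{e},\rho^{e},\beta^{e}$, and the in/out $h$-maps are natural with respect to the mapping-cone functor. Hence composing both sides of each identity with the $\mathfrak{mc}$-iteration isomorphism reduces each assertion to the analogous assertion with $i=0$. Naturality of the Kasparov product then reduces the desired equalities of induced maps on $F^{e},H^{e}$ to equalities in the small $\kkE$-groups $\kkE(\mathfrak{f}_{m,0},\mathfrak{f}_{m',0})$ and $\kkE(\mathfrak{e}_{m,0},\mathfrak{f}_{m',0})$ (and their $\mathsf{S}$-shifts), where one may just exhibit an explicit homotopy of \starhoms between the concrete model algebras.

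Concretely, the identity $\kappa_{n,mn,i}^{e}\circ\rho_{nm,i}^{e}=\rho_{n,i}^{e}$ reflects the obvious commutative triangle $\C\to\mathbb{I}_{n,0}\hookrightarrow\mathbb{I}_{nm,0}$ in which both legs are the natural unital/diagonal inclusions. The identity $\varkappa_{mn,m,i}^{e}\circ\rho_{m,i}^{e}=n\rho_{mn,i}^{e}$ records the fact that inside $\mathsf{M}_{n}\otimes\mathsf{M}_{m}$ the unit $1_{\mathsf{M}_{n}}\otimes 1_{\mathsf{M}_{m}}$ equals $n$ times a rank-one projection tensored with $1_{\mathsf{M}_{m}}$ in $K_{0}$, so the induced map on $K_{0}$ picks up a factor of $n$; the same $K_{0}$-count, pushed through the connecting maps of $\mathfrak{e}_{n,0}$, yields the dual $\beta$-identities $\beta_{n,i}^{e}\circ\kappa_{n,mn,i}^{e}=m\beta_{nm,i}^{e}$ and $\beta_{nm,i}^{e}\circ\varkappa_{nm,m,i}^{e}=\beta_{m,i}^{e}$. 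For the four squares in the display the same strategy applies: each square reduces to a commutative square of \starhoms among $\C,\mathsf{M}_{n},\mathsf{M}_{m},\mathsf{M}_{nm},\mathbb{I}_{n,0},\mathbb{I}_{m,0},\mathbb{I}_{nm,0}$, with the integer factors $m$ and $n$ on the vertical arrows coming from exactly the same $K_{0}$-multiplicity calculation in $\mathsf{M}_{n}\otimes\mathsf{M}_{m}$. I would organise the verification by handling the $(1,1,\mathrm{in/out})$-squares and the $(1,n,\mathrm{in/out})$-squares separately; the arguments are parallel.

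The main obstacle is bookkeeping: one must make sure the conventions for $\rho^{e},\beta^{e}$ and the various $h$-maps from \cite{err:idealkthy} mesh correctly with the block-diagonal and tensor-with-unit embeddings used to define $\kappa^{e},\varkappa^{e},\omega^{e},\chi^{e}$, so that the multiplicative factors $m$ and $n$ land on the correct sides of each equation and no spurious signs appear from the mapping-cone connecting maps. Beyond this, each individual verification is a routine, model-level homotopy computation among \starhoms between small, explicit \cstar-algebras.
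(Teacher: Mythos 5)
Your proposal is correct and follows essentially the same route as the paper: the first four identities are dispatched exactly as in ordinary $K$-theory with coefficients (the factors $m$ and $n$ arising from the multiplicity of the unit under the diagonal and tensor embeddings), and the four squares are reduced to morphisms between the concrete model extensions, with everything transported to general $i$ by naturality of $\mathfrak{mc}$. The only difference is one of packaging: where you propose to verify each square by an explicit homotopy in the small $\kkE$-groups, the paper observes that the diagonal embedding $\mathfrak{e}_{n,j}\to\mathfrak{e}_{nm,j}$ and the identification $\mathfrak{e}_{nm,j}\to\mathsf{M}_{n}(\mathfrak{e}_{m,j})$ are morphisms of the short exact sequences $\mathfrak{i}(\cdot)\hookrightarrow\cdot\twoheadrightarrow\mathfrak{q}(\cdot)$ defining the $h$-maps, so the squares commute on the nose before applying $\kkE(-,e)$ and no homotopy argument is needed.
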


\begin{proof}
The first four equations follow just like in ordinary $K$-theory with coefficients. 
For every extension $e' \colon \mathfrak{A}_{0}' \hookrightarrow \mathfrak{A}_{1}' \twoheadrightarrow \mathfrak{A}_{2}'$ of \cstar-algebras, we let $\mathfrak{i}(e')$ and $\mathfrak{q}(e')$ denote the extensions $\mathfrak{A}_{0}'\overset{\operatorname{id}}{\hookrightarrow}\mathfrak{A}_{0}'\twoheadrightarrow 0$ and $0\hookrightarrow\mathfrak{A}_{2}'\overset{\operatorname{id}}{\twoheadrightarrow} \mathfrak{A}_{2}'$, respectively (\cite[Section~2]{err:idealkthy}).  
For commutativity of the four diagrams, we note the following. 
\begin{enumerate}[(1)]
\item
  Note that the map $f \mapsto \operatorname{diag}( f, f, \dots, f )$ induces a homomorphism 
  \begin{align*}
    \xymatrix{
      \mathfrak{i} ( \mathfrak{e}_{n,2} ) \ar@{^{(}->}[r] \ar[d] & \mathfrak{e}_{n,2} \ar@{>>}[r] \ar[d] & \mathfrak{q} ( \mathfrak{e}_{n,2} ) \ar[d] \\
      \mathfrak{i} ( \mathfrak{e}_{nm,2} ) \ar@{^{(}->}[r] & \mathfrak{e}_{nm,2} \ar@{>>}[r]  & \mathfrak{q} ( \mathfrak{e}_{nm,2} ) 
    }
  \end{align*}
of extensions.  Applying $\mathfrak{mc}^{i}$ to this diagram, for $i=0,1,2,3,4,5$, we get the first diagram.
\item
  Note that the homomorphism from $\mathfrak{e}_{nm,0}$ to $\mathsf{M}_{n} ( \mathfrak{e}_{m,0} )$ induces a homomorphism 
  \begin{align*}
    \xymatrix{
      \mathfrak{i} ( \mathfrak{e}_{nm,2} ) \ar@{^{(}->}[r] \ar[d] & \mathfrak{e}_{nm,2} \ar@{>>}[r] \ar[d]  & \mathfrak{q} ( \mathfrak{e}_{nm,2} ) \ar[d] \\
      \mathfrak{i} \left( \mathsf{M}_{n} ( \mathfrak{e}_{m,2} ) \right) \ar@{^{(}->}[r]  & \mathsf{M}_{n} ( \mathfrak{e}_{m,2} ) \ar@{>>}[r] & \mathfrak{q} \left( \mathsf{M}_{n} ( \mathfrak{e}_{m,2} ) \right) 
    }
  \end{align*}
of extensions.  Applying $\mathfrak{mc}^{i}$ to this diagram, for $i=0,1,2,3,4,5$, we get the second diagram. 
\item
  Note that the map $f \mapsto \operatorname{diag}( f, f, \dots, f )$ induces a homomorphism 
  \begin{align*}
    \xymatrix{
      \mathfrak{i} ( \mathfrak{e}_{n,0} ) \ar@{^{(}->}[r] \ar[d] & \mathfrak{e}_{n,0} \ar@{>>}[r] \ar[d] & \mathfrak{q} ( \mathfrak{e}_{n,0} ) \ar@{=}[d] \\
      \mathfrak{i} ( \mathfrak{e}_{nm,0} ) \ar@{^{(}->}[r] & \mathfrak{e}_{nm,0} \ar@{>>}[r]  & \mathfrak{q} ( \mathfrak{e}_{nm,0} ) 
    }\end{align*}of extensions.  Applying $\mathfrak{mc}^{i}$ to this diagram, for $i=0,1,2,3,4,5$, we get the third diagram. 
\item
  Note that the homomorphism from $\mathfrak{e}_{nm,0}$ to $\mathsf{M}_{n} ( \mathfrak{e}_{m,0} )$ induces a homomorphism 
  \begin{align*}
    \xymatrix{
      \mathfrak{i} ( \mathfrak{e}_{nm,0} ) \ar@{^{(}->}[r] \ar@{=}[d] & \mathfrak{e}_{nm,0} \ar@{>>}[r] \ar[d]  & \mathfrak{q} ( \mathfrak{e}_{nm,0} ) \ar[d] \\
      \mathfrak{i} \left( \mathsf{M}_{n} ( \mathfrak{e}_{m,0} ) \right) \ar@{^{(}->}[r]  & \mathsf{M}_{n} ( \mathfrak{e}_{m,0} ) \ar@{>>}[r] & \mathfrak{q} \left( \mathsf{M}_{n} ( \mathfrak{e}_{m,0} ) \right) 
    }
  \end{align*}
of extensions.  Applying $\mathfrak{mc}^{i}$ to this diagram, for $i=0,1,2,3,4,5$, we get the fourth diagram. \qedhere
\end{enumerate}
\end{proof}

\begin{defin}
  By a \bolddefine{Kirchberg algebra} we mean a simple, purely infinite,
  separable, nuclear $C^{*}$-algebra.
\end{defin}

\begin{remar}
  \label{rem:strong-pure-infiniteness}
  Strong pure infiniteness is
  considered in \cite{kirchrordam2}, 
  and it is shown that a separable, stable, nuclear $C^{*}$-algebra \A is
  strongly purely infinite if and only if \A absorbs $\mathcal{O}_\infty$, \ie, if and only if $\A\cong\A\otimes\mathcal{O}_\infty$. 
  
  It is shown independently by Kirchberg, 
  and by Toms and Winter (\cf\ \cite[Theorem~4.3]{tomswinterZstable}) 
  that $\mathcal{O}_\infty$-stability passes to extensions, \ie, 
  if $\A$ and $\B$ are $\mathcal{O}_\infty$-stable, so is every extension
  of \A and \B. The opposite was shown by Kirchberg and R\o rdam 
  (\cf\ \cite[Proposition~8.5]{kirchrordam2}). 
  
  Thus we can note that a separable, stable, nuclear $C^{*}$-algebras with finitely many
  ideals is strongly purely infinite if and only if it is
  $\mathcal{O}_\infty$-absorbing if and only if it is in the smallest class
  of $C^{*}$-algebras closed under extensions and containing all stable Kirchberg
  algebras. 

  In particular, every extension of a Kirchberg algebra by another Kirchberg algebra is strongly purely
  infinite. 
\end{remar}

\begin{propo}\label{p:projres}
Let $( G_{i} )_{ i = 0 }^{5}$ be a cyclic six term exact sequence of countable abelian groups, and let 
\begin{equation}\label{eqproj}
( H_{i} )_{ i=  0 }^{5} \hookrightarrow ( F_{i} )_{ i =0}^{5} \twoheadrightarrow ( G_{i} )_{ i = 0}^{5}
\end{equation}
be any projective resolution with countable abelian groups, only.  Then there exists a short exact sequence 
\begin{equation}\label{eqgeoproj}
\mathsf{S} e  \hookrightarrow  e'' \twoheadrightarrow  e'
\end{equation}
of extensions where $e$ and $e'$ are extensions of stable Kirchberg algebras in the bootstrap category $\mathcal{N}$, and $e''$ is an extension of nuclear, separable $C^{*}$-algebras in the bootstrap category $\mathcal{N}$, such that the cyclic six term exact sequence associated with (\ref{eqgeoproj}) degenerates into short exact sequences 
\begin{equation*}
\ksix ( e'' ) \hookrightarrow \ksix ( e' ) \twoheadrightarrow \ksix ( \mathsf{S}^{2} e )
\end{equation*}
which are isomorphic to the projection resolution in (\ref{eqproj}) (in such a way that $F_{1,i}^{e''}\cong H_i$, $F_{1,i}^{e'}\cong F_i$, and $F_{1,i}^{\mathsf{S}^2 e}\cong G_i$ for $i=0,1,2,3,4,5$, of course).
\end{propo}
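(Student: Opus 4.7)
The plan is to realize the given algebraic projective resolution geometrically as the short exact sequence induced on \ksix{} by a mapping cone of extensions. First I would use the range-of-invariant results for \ksix{} of extensions of stable Kirchberg algebras in the bootstrap category (available through the classification work of R\o{}rdam~\cite{extpurelyinf}, Bonkat~\cite{bonkat}, Kirchberg~\cite{kirchpure} and the second named author~\cite{cp:ckalg}): every cyclic six term exact sequence of countable abelian groups arises as \ksix{} of such an extension. I use this to select extensions $e$ and $e'$ of stable Kirchberg algebras in $\bootstrap$ together with the prescribed isomorphisms $F^{\mathsf{S}^2 e}_{1,i} \cong G_i$ and $F^{e'}_{1,i} \cong F_i$.

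Next I would lift the surjection $(F_i) \twoheadrightarrow (G_i)$ to a $*$-homomorphism of extensions. Via Bott periodicity identifying $\ksix(\mathsf{S}^2 e)$ with $\ksix(e)$, the surjection becomes a morphism in $\Homsix(\ksix(e'), \ksix(e))$. Bonkat's UCT yields
\begin{equation*}
0 \to \Extsix(\ksix(e'), \ksix(\mathsf{S}e)) \to \kkE(e', e) \to \Homsix(\ksix(e'), \ksix(e)) \to 0,
\end{equation*}
and since $\ksix(e') \cong (F_i)$ is projective in the category of cyclic six term exact sequences, the $\Extsix$-term vanishes; hence the surjection lifts uniquely to an element $\phi \in \kkE(e', e)$. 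By Remark~\ref{rem:strong-pure-infiniteness}, $e$ and $e'$ are strongly purely infinite and $\mathcal{O}_\infty$-absorbing, so Kirchberg's classification of morphisms between such extensions realizes $\phi$ by a genuine $*$-homomorphism of extensions, which I continue to denote $\phi \colon e' \to e$.

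Finally I define $e''$ to be the mapping cone of $\phi$, formed component-wise on ideal, middle term and quotient. A direct verification shows that $e''$ is an extension of separable nuclear \cstar-algebras in $\bootstrap$, sitting in a short exact sequence of extensions with completely positive coherent splitting
\begin{equation*}
\mathsf{S}e \hookrightarrow e'' \twoheadrightarrow e'.
\end{equation*}
By Remark~\ref{r:exact}, this induces a cyclic long exact sequence in \ksix{} whose connecting map $\ksix(e') \to \ksix(\mathsf{S}^2 e)$ is, by the very construction of the mapping cone, the prescribed surjection $(F_i) \twoheadrightarrow (G_i)$. Surjectivity of this connecting map forces the long exact sequence to degenerate into the required short exact sequence
\begin{equation*}
\ksix(e'') \hookrightarrow \ksix(e') \twoheadrightarrow \ksix(\mathsf{S}^2 e),
\end{equation*}
which is canonically isomorphic to the given projective resolution. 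The principal obstacle is the middle step, namely promoting an abstract \kkE{}-class to a genuine $*$-homomorphism between extensions; this is not formal and relies essentially on Kirchberg's deep classification machinery, applicable here precisely because of the strong pure infiniteness guaranteed by Remark~\ref{rem:strong-pure-infiniteness}.
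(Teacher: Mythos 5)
Your proposal is correct and follows essentially the same route as the paper: realize $(G_i)$ and $(F_i)$ as $\ksix$ of extensions of stable Kirchberg algebras in $\bootstrap$ (the paper cites Proposition~5.4 of \cite{extpurelyinf} for this range result), lift the epimorphism through Bonkat's UCT to a $\kkE$-class, promote it to a genuine morphism of extensions via Kirchberg's Hauptsatz~4.2 of \cite{kirchpure} (justified by strong pure infiniteness as in Remark~\ref{rem:strong-pure-infiniteness}), and take the mapping cone, whose six term sequence degenerates because the connecting map is the given surjection. The only cosmetic differences are that you additionally note the vanishing of the $\Extsix$-term (the paper only needs surjectivity of the UCT index map) and make the Bott periodicity identification explicit.
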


\begin{proof}
According to Proposition 5.4 of \cite{extpurelyinf}, there exist essential extensions $e$ and $e'$ of stable Kirchberg algebras in the bootstrap category $\mathcal{N}$, such that $\ksix ( e )$ and $\ksix ( e' )$ are (isomorphic) to the sequences $( G_{i} )_{ i = 0 }^{5}$ and $( F_{i} )_{ i = 0 }^{5}$, resp.  

We have an epic morphism $\ftn{ \phi }{ \ksix ( e' ) }{ \ksix ( e ) }$ given from (\ref{eqproj}).  Using the UCT of Bonkat (see Satz 7.5.3 of \cite{bonkat}), we can lift this to an element $x \in \kkE ( e, e ' )$.  Using Kirchberg's lifting result for nuclear, stable, strongly purely infinite $C^{*}$-algebras (see Hauptsatz 4.2 of \cite{kirchpure}), we know that $x$ can be lifted to a morphism $\ftn{ \psi = ( \psi_{0} , \psi_{1} , \psi_{2} ) }{ e }{ e' }$ of extensions.  

Consequently, $\ksix ( \psi ) = \phi$.  Let now
\begin{equation*}
\mathsf{S} e \hookrightarrow e'' \twoheadrightarrow e'
\end{equation*} 
be the mapping cone sequence of the morphism $\psi$ (see \cite{bonkat}).  Then $e''$ is an extension of nuclear, separable $C^{*}$-algebras in the bootstrap category $\mathcal{N}$.  Since the index and exponential maps correspond to the epic morphism $\ksix ( \psi )$, the cyclic six term exact sequence degenerates into short sequences of the form 
\begin{equation*}
\ksix ( e'' ) \hookrightarrow \ksix ( e' ) \twoheadrightarrow \ksix ( \mathsf{S}^{2} e ).
\end{equation*}
Necessarily, $\ksix ( e'' )$ has to correspond to $( H_{i} )_{ i = 0}^{5}$ since this is (up to isomorphism) the kernel.
\end{proof}

\begin{assumption}\label{assume1}
For the rest of this section, let 
\begin{equation*}
\xymatrix{
G_{0} \ar[r] & G_{1} \ar[r] & G_{2} \ar[d] \\
G_{5} \ar[u] & G_{4} \ar[l] & G_{3} \ar[l]
}
\end{equation*}
be a given cyclic six term exact sequence of finitely generated abelian groups.  Assume moreover, that $G_{3} = 0$ and that $G_{5}$ is a free abelian group.  Consequently, $G_{4}$ is also free.
\end{assumption}

According to the structure theorem for finitely generated abelian groups, we can -- up to isomorphism -- write $G_{2}$ as 
\begin{align*}
&( \Z_{ p_{1}^{ n_{1 , 1} } } )^{ m_{1, 1} } \oplus ( \Z_{ p_{1}^{ n_{1 , 2}} } )^{ m_{1,2} } \oplus \cdots \oplus ( \Z_{ p_{1}^{ n_{1} , k_{1} } } )^{ m_{ 1, k_{1} } } \\
&\oplus ( \Z_{ p_{2}^{ n_{2,1} } } )^{ m_{2,1} } \oplus ( \Z_{ p_{2}^{ n_{2,2} } } )^{ m_{2,2} } \oplus \cdots \oplus ( \Z_{ p_{2}^{ n_{2, k_{2} } } } )^{ m_{2, k_{2} } } \\
&\oplus \cdots \oplus ( \Z_{ p_{l}^{ n_{\ell,1} } } )^{ m_{\ell,1} } \oplus ( \Z_{ p_{\ell}^{ n_{\ell,2} } } )^{ m_{\ell,2} } \oplus \cdots \oplus (\Z_{ p_{\ell}^{ n_{ \ell , k_{\ell} } } } )^{ m_{\ell, k_{\ell} } } \\
&\oplus \Z^{s}
\end{align*}
where $\ell,s \in \N_{0}$ and $(p_{i} )_{ i = 1}^{\ell}$ is a strictly increasing sequence of prime numbers, $( k_{j} )_{ j = 1}^{\ell}$, $(m_{i,j})_{ j = 1}^{ k_{i} } \subset \N$, for all $i = 1, \dots, \ell$, and $( n_{i,j} )_{ j = 1}^{ k_{i} } \subset \N$ is strictly increasing, for all $i = 1, \dots , \ell$.

For each canonical generator of this group, we fix a lifting to an element of $G_{1}$.  Let $F_{2}$ denote the group
\begin{align*}
&\Z^{ m_{1, 1} } \oplus \Z^{ m_{1,2} } \oplus \cdots \oplus \Z^{ m_{ 1, k_{1} } } \\
&\oplus \Z^{ m_{2,1} } \oplus \Z^{ m_{2,2} } \oplus \cdots \oplus \Z^{ m_{2, k_{2} } } \\
&\oplus \cdots \oplus \Z^{ m_{\ell,1} } \oplus \Z^{ m_{\ell,2} } \oplus \cdots \oplus \Z^{ m_{\ell, k_{\ell} } } \\
&\oplus \Z^{s}.
\end{align*}
Let $\ftn{ \zeta }{ F_{2} }{ G_{1} }$ be the surjective homomorphism from $F_{2}$ to $G_{1}$ which sends each canonical generator to the lifting of the corresponding generator of $G_{2}$ (chosen above).

There exists a finitely generated free abelian group $\widetilde{F}_{0}$ and a surjective homomorphism $\ftn{\eta_{0}}{ \widetilde{F}_{0} }{ G_{0} }$.  Then we have a projective resolution of $( G_{i} )_{ i = 0 }^{5}$ as follows:
\begin{equation*}
\xymatrix{
\ar[r] & 0 \ar[r] \ar[d]^{ \lambda_{3} } & 0 \ar[r] \ar[d]^{ \lambda_{4} } & H_{5} \ar[r] \ar@{^{(}->}[d]^{ \lambda_{5}} & H_{0} \ar[r] \ar@{^{(}->}[d]^{ \lambda_{0} } & H_{1} \ar[r] \ar@{^{(}->}[d]^{ \lambda_{1} }& H_{2} \ar[r] \ar@{^{(}->}[d]^{ \lambda_{2} } & \\
\ar[r] & 0 \ar[r] \ar[d]^{ \eta_{3} } & G_{4} \ar[r] \ar@{=}[d]^{ \eta_{4} } & G_{4} \oplus G_{5} \ar[r] \ar@{>>}[d]^{ \eta_{5} } & G_{5} \oplus \widetilde{F}_{0} \ar[r] \ar@{>>}[d]^{ \eta_{0} }& \widetilde{F}_{0} \oplus F_{2} \ar[r] \ar@{>>}[d]^{ \eta_{1} } & F_{2} \ar[r]  \ar@{>>}[d]^{ \eta_{2} }& \\
\ar[r] & 0 \ar[r] & G_{4} \ar[r] & G_{5} \ar[r] & G_{0} \ar[r] & G_{1} \ar[r] & G_{2} \ar[r] & 
}
\end{equation*}
where $( \eta_{i} )_{ i= 0}^{5}$ are defined in the obvious way (we use, of course, $\zeta$ in the definition of $\eta_{1}$), and the $( H_{i} )_{ i = 0 }^{5}$ is the kernel of $( \eta_{i} )_{ i= 0 }^{5}$.  An easy diagram chase shows that $\eta_{1}$ is surjective.  To match the notation from Proposition \ref{p:projres}, we let $F_{3} = 0$, $F_{4} = G_{4}$, $F_{5} = G_{4} \oplus G_{5}$, $F_{0} = G_{5} \oplus \widetilde{F}_{0}$, and $F_{1} = \widetilde{F}_{0} \oplus F_{2}$.

We will, in the canonical way, identify $H_{2}$ with 
\begin{align*}
&\Z^{ m_{1, 1} } \oplus \Z^{ m_{1,2} } \oplus \cdots \oplus \Z^{ m_{ 1, k_{1} } } \\
&\oplus \Z^{ m_{2,1} } \oplus \Z^{ m_{2,2} } \oplus \cdots \oplus \Z^{ m_{2, k_{2} } } \\
&\oplus \cdots \oplus \Z^{ m_{\ell,1} } \oplus \Z^{ m_{\ell,2} } \oplus \cdots \oplus \Z^{ m_{\ell, k_{\ell} } } \\
\end{align*} 
and 
under this identification the injection $\ftn{ \lambda_{2} }{ H_{2} }{ F_{2} }$ can be identified with the diagonal matrix
\begin{align*}
\mathrm{diag}(& \overbrace{p_{1}^{n_{1,1} } , \cdots, p_{1}^{n_{1,1} } }^{ m_{1,1} }, \overbrace{p_{1}^{ n_{1,2} } , \cdots p_{1}^{n_{1,2}} }^{ m_{1,2} }, \cdots , \overbrace{ p_{1}^{n_{1,k_{1}}} , \cdots , p_{1}^{ n_{1, k_{1} } } }^{ m_{1, k_{1} } } , \\
		 & \overbrace{ p_{2}^{n_{2,1} } , \cdots, p_{2}^{n_{2,1} } }^{ m_{2,1} } ,\overbrace{ p_{2}^{ n_{2,2} } , \cdots p_{2}^{n_{2,2}} }^{ m_{2,2} }, \cdots , \overbrace{ p_{2}^{n_{2,k_{2}}} , \cdots , p_{2}^{ n_{2, k_{2} } } }^{ m_{ 2 ,k_{2} } }, \\
		 & \cdots , \overbrace{ p_{\ell}^{n_{\ell,1} } , \cdots, p_{\ell}^{n_{\ell,1} } }^{ m_{\ell,1} }, \overbrace{ p_{\ell}^{ n_{\ell,2} } , \cdots p_{\ell}^{n_{\ell,2}} }^{ m_{ \ell,2 } }, \cdots , \overbrace{ p_{\ell}^{n_{\ell,k_{\ell}}} , \cdots , p_{\ell}^{ n_{\ell, k_{\ell} } } }^{ m_{\ell, k_{\ell} } } ).
\end{align*}

\section{Exactness of a sequence}
If we let $e_1$ and $e_2$ be essential extensions of stable Kirchberg algebras in the bootstrap category $\mathcal{N}$ with all the $K$-theory appearing in the cyclic six term exact sequences being finitely generated and the $K_1$-groups of the quotients being free and the $K_1$-groups of the ideals being zero. 
Now let $\mathsf{S} e_{1} \overset{\phi}{\hookrightarrow} e''  \overset{\psi}{ \twoheadrightarrow}  e'$ be a short exact sequence of essential extensions of stable Kirchberg algebras in the bootstrap category $\mathcal{N}$ such that the induced sequence $\ksix ( e'' ) \hookrightarrow \ksix ( e' ) \twoheadrightarrow \ksix ( \mathsf{S}^{2} e_{1} )$ is exactly (isomorphic to) the projective resolution given in the previous section. 
Then this induces a cyclic six term sequence
$$\xymatrix{
\underline{K}_{\mathcal{E}}(\mathsf{S}e_1)\ar[r] & \underline{K}_{\mathcal{E}}(e'')\ar[r] & \underline{K}_{\mathcal{E}}(e')\ar[d] \\ 
\underline{K}_{\mathcal{E}}(\mathsf{S}e')\ar[u] & \underline{K}_{\mathcal{E}}(\mathsf{S}e'')\ar[l]  & \underline{K}_{\mathcal{E}}(\mathsf{S}^2e_1)\ar[l] 
}$$
In this chapter we show, that when we apply the functor $\Hom_{ \Lambda} ( \underline{K}_{ \mathcal{E} } (-) , \underline{K}_{ \mathcal{E} } ( \mathsf{S} e_{2} ) )$ to this sequence, the sequence 
\begin{equation*}
\xymatrix{ \Hom_{ \Lambda} ( \underline{K}_{ \mathcal{E} } ( e ' ) , \underline{K}_{ \mathcal{E} } ( \mathsf{S} e_{2} ) ) \ar[r]^{\psi^{*} } 
& \Hom_{ \Lambda } ( \underline{K}_{\mathcal{E} } ( e'' ) , \underline{K}_{\mathcal{E} } ( \mathsf{S} e_{2} ) ) \ar[r]^{\phi^{*} } 
& \Hom_{ \Lambda } ( \underline{K}_{\mathcal{E} } ( \mathsf{S}e_{1} ) , \underline{K}_{ \mathcal{E} } (\mathsf{S}e_{2} ) ) \ar[d]^{ \delta } \\
&&\Hom_{ \Lambda } ( \underline{K}_{\mathcal{E} } ( \mathsf{S}e' ) , \underline{K}_{ \mathcal{E} } ( \mathsf{S}e_{2} ) ) }
\end{equation*}
is exact. This is the main ingredient in the proof of the UMCT in the next section. 

\begin{assumption}\label{assume2}
Let $( G_{i} )_{ i = 0}^{5}$ be as in Assumption \ref{assume1}, and let 
\begin{equation*}
\xymatrix{( H_{i} )_{ i = 1}^{5} \ar@{^{(}->}[r]^-{ (\lambda_{i})_{ i = 0 }^{5} }  & ( F_{i} )_{ i = 0 }^{5} \ar@{>>}[r]^-{( \eta_{i} )_{ i = 0 }^{5} }  & ( G_{i} )_{ i = 0 }^{5} }
\end{equation*} 
be the projective resolution given right after Assumption \ref{assume1}.

Let, moreover, $\mathsf{S} e_{1} \overset{\phi}{\hookrightarrow} e''  \overset{\psi}{ \twoheadrightarrow}  e'$ be a geometric resolution corresponding exactly to this resolution according to Proposition \ref{p:projres}, \ie, the induced sequence $\ksix ( e'' ) \hookrightarrow \ksix ( e' ) \twoheadrightarrow \ksix ( \mathsf{S}^{2} e_{1} )$ is exactly (isomorphic to) the above resolution.

Let 
\begin{equation*}
\mathfrak{A}_{0} \overset{\iota}{\hookrightarrow} \mathfrak{A}_{1} \overset{ \pi }{ \twoheadrightarrow } \mathfrak{A}_{2}, \quad \mathfrak{A}_{0}' \overset{\iota'}{\hookrightarrow} \mathfrak{A}_{1}' \overset{ \pi' }{ \twoheadrightarrow } \mathfrak{A}_{2}', \quad \text{and} \quad \mathfrak{A}_{0}'' \overset{\iota''}{\hookrightarrow} \mathfrak{A}_{1}'' \overset{ \pi'' }{ \twoheadrightarrow } \mathfrak{A}_{2}''
\end{equation*}
denote the extensions $e_{1}$, $e'$, and $e''$, resp.
\end{assumption}

\begin{lemma}\label{l:kken1}
Suppose $n = p_{i}^{n_{i,j}}$.  Then $H_{n, 1}^{e''}$ is isomorphic to 
\begin{align*}
&(\Z_{n})^{ m_{1, 1} } \oplus (\Z_{n })^{ m_{1,2} } \oplus \cdots \oplus (\Z_{n })^{ m_{ 1, k_{1} } } \\
&\oplus (\Z_{n })^{ m_{2,1} } \oplus (\Z_{n })^{ m_{2,2} } \oplus \cdots \oplus (\Z_{n})^{ m_{2, k_{2} } } \\
&\oplus \cdots\oplus (\Z_{n })^{ m_{\ell,1} } \oplus (\Z_{n })^{ m_{\ell,2} } \oplus \cdots \oplus (\Z_{n })^{ m_{\ell, k_{\ell} } },
\end{align*}
$H_{n,1}^{e'}$ is isomorphic to 
\begin{align*}
&(\Z_{n})^{ m_{1, 1} } \oplus (\Z_{n })^{ m_{1,2} } \oplus \cdots \oplus (\Z_{n })^{ m_{ 1, k_{1} } } \\
&\oplus (\Z_{n })^{ m_{2,1} } \oplus (\Z_{n })^{ m_{2,2} } \oplus \cdots \oplus (\Z_{n })^{ m_{2, k_{2} } } \\
&\oplus \cdots \oplus (\Z_{n })^{ m_{\ell,1} } \oplus (\Z_{n })^{ m_{\ell,2} } \oplus \cdots \oplus (\Z_{n })^{ m_{\ell, k_{\ell} } } \\
&\oplus (\Z_{n})^{s},
\end{align*}
and we may identify $\ker ( \ftn{ \underline{K}_{\mathcal{E}}(\psi)_{\mathfrak{e}_{n,1}} } { H_{n,1}^{e''} }{ H_{n,1}^{e'} } )$ with 
\begin{align*}
    &0\oplus 0\oplus\cdots\oplus 0\\
    &\oplus\cdots\\
    &\oplus p_i^{n_{i,j}-n_{i,1}}(\Z_n)^{m_{i,1}}
    \oplus \cdots
    \oplus p_i^{n_{i,j}-n_{i,j-1}}(\Z_n)^{m_{i,j-1}}
    \oplus (\Z_n)^{m_{i,j}}
    \oplus\cdots
    \oplus(\Z_n)^{m_{i,k_i}}\\
    &\oplus \cdots \\
    &\oplus 0\oplus 0\oplus\cdots\oplus 0.
  \end{align*}
\end{lemma}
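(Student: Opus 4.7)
The plan is to apply the six-term cyclic exact sequence from the first diagram in Theorem~\ref{err:idealkthy}, specialized to $i=1$, whose relevant portion reads
$$F_{1,1}^e\longrightarrow F_{1,2}^e\longrightarrow H_{n,1}^e\longrightarrow F_{1,4}^e\longrightarrow F_{1,5}^e$$
with the first map being $nf^e_{1,1}$, the middle two being $h^{1,1,in}_{n,1,e}$ and $h^{1,1,out}_{n,1,e}$, and the last being $nf^e_{1,4}$. The vanishings $H_3=H_4=F_3=0$ together with the freeness of $F_4$ from the projective resolution of Section~2 will let us express $H_{n,1}^{e''}$ and $H_{n,1}^{e'}$ as quotients of $F_{1,2}^{e''}=H_2$ and $F_{1,2}^{e'}=F_2$, respectively. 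The induced map $\underline{K}_{\mathcal{E}}(\psi)_{\mathfrak{e}_{n,1}}$ will then be identified by naturality with the reduction modulo $n$ of $\lambda_2\colon H_2\to F_2$, whose kernel is computed directly from its block-diagonal form.

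For $e''$, the relation $F_{1,4}^{e''}=H_4=0$ makes $h^{1,1,out}_{n,1,e''}$ zero, so $h^{1,1,in}_{n,1,e''}$ is surjective. The ordinary six-term exact sequence of $e''$ combined with $F_{1,3}^{e''}=H_3=0$ forces $f^{e''}_{1,1}\colon H_1\to H_2$ to be surjective, so the image of $nf^{e''}_{1,1}$ is precisely $nH_2$. Hence $H_{n,1}^{e''}\cong H_2/nH_2$, and using the explicit description $H_2=\bigoplus_{a,b}\Z^{m_{a,b}}$ from Section~2 yields the first claim.

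For $e'$, the only change is that $F_{1,4}^{e'}=G_4$ is no longer zero; but since $G_4$ is free, the map $nf^{e'}_{1,4}\colon G_4\to G_4\oplus G_5$ (which is $n$ times the canonical inclusion) is still injective, so $h^{1,1,in}_{n,1,e'}$ is again surjective. Combined with $F_{1,3}^{e'}=F_3=0$, which as above makes $f^{e'}_{1,1}\colon F_1\to F_2$ surjective, this gives $H_{n,1}^{e'}\cong F_2/nF_2$; writing $F_2=\bigoplus_{a,b}\Z^{m_{a,b}}\oplus\Z^s$ yields the second claim.

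By naturality of $h^{1,1,in}_{n,1,-}$ with respect to the morphism $\psi\colon e''\to e'$, the induced map $H_2/nH_2\to F_2/nF_2$ is the reduction of $\lambda_2$ modulo $n$. Reading off the diagonal form of $\lambda_2$ recorded at the end of Section~2 (entry $p_a^{n_{a,b}}$ on block $(a,b)$), the kernel decomposes blockwise: on blocks with $a\ne i$ the entry is invertible modulo $n=p_i^{n_{i,j}}$, so the kernel vanishes; on blocks $(i,b)$ with $b\ge j$ one has $n_{i,b}\ge n_{i,j}$, so the entry vanishes modulo $n$ and the full $\Z_n^{m_{i,b}}$ lies in the kernel; on blocks $(i,b)$ with $b<j$ the divisibility $p_i^{n_{i,j}}\mid p_i^{n_{i,b}}x$ is equivalent to $p_i^{n_{i,j}-n_{i,b}}\mid x$, so the kernel is $p_i^{n_{i,j}-n_{i,b}}\Z_n^{m_{i,b}}$. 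This matches the displayed expression; the only (mild) obstacle is the careful index bookkeeping needed to line the three cases up with the claimed formula, which succeeds thanks to the strict monotonicity of the sequence $(n_{i,b})_b$.
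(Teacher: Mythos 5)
Your proposal is correct and follows essentially the same route as the paper: both extract the exact sequence $F_{1,1}^{e}\to F_{1,2}^{e}\to H_{n,1}^{e}\to 0$ from the first cyclic sequence of Theorem~\ref{err:idealkthy} (using $F_{1,4}^{e''}=0$, the injectivity of $nf_{1,4}^{e'}$ on the free group $G_4$, and the surjectivity of $f_{1,1}$ coming from $F_{1,3}=0$), identify the two $H$-groups as $H_2/nH_2$ and $F_2/nF_2$, and then compute the kernel of $\underline{K}_{\mathcal{E}}(\psi)_{\mathfrak{e}_{n,1}}$ as the kernel of $\lambda_2$ reduced modulo $n$ via its diagonal form. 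The paper's kernel computation is phrased element-by-element rather than by naturality of the quotient identification, but the content is the same.
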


\begin{proof}
  Note that $\ftn{ n f_{1, 4}^{ e'} }{ F_{1, 4 }^{e'} }{ F_{1,5}^{e'} }$ and $\ftn{ n f_{1, 4}^{ e''} }{ F_{1,4}^{e''} }{ F_{1,5}^{e''} }$ are injective homomorphisms.  Therefore, by Theorem \ref{err:idealkthy}, 
  \begin{align}\label{exact1}
    \xymatrix{
      F_{1,1}^{e'} \ar[r]^-{ n f_{1, 1}^{ e'} } & F_{1,2}^{ e'} \ar[r]^-{ h_{n, 1, e' }^{1,1, in } } & H_{n,1}^{e'} \ar[r] & 0 }
  \end{align}
  \begin{align}\label{exact2}
    \xymatrix{
      F_{1,1}^{e''} \ar[r]^-{ n f_{1, 1}^{ e'' } } & F_{1,2}^{ e''} \ar[r]^-{ h_{n, 1, e'' }^{1,1, in } } & H_{n,1}^{e''} \ar[r] & 0 
    }
  \end{align}
  are exact sequences.  Since $f_{1, 1}^{ e'}$ and $f_{1,1}^{e''}$ are surjective functions, $\im ( n f_{1,1}^{ e' } ) = n F_{1,2}^{e'}$ and $\im (n f_{1,1}^{ e'' } ) = n F_{1,2}^{e''}$.  Thus, $H_{n, 1}^{e''}$ and $H_{n, 1}^{e'}$ are as stated.
  
  Recall that $\ftn{ \kE ( \psi )_{ \F{1}{ 2}  } }{ F_{1,2}^{e''} }{ F_{1,2}^{e'} }$ is equal to $\lambda_{2}$.  Suppose 
  \begin{align*}
    x 
    &= (x_{1,1 } , x_{1,2} , \ldots, x_{ 1, k_{1} } , \\
    &\quad\quad x_{2,1} , x_{2,2} , \ldots, x_{2, k_{2} }, \\
    &\quad\quad \ldots, x_{\ell, 1 } , x_{\ell,2} , \ldots , x_{\ell , k_{\ell} } )
  \end{align*}
  is an element of $F_{1,2}^{e''}$ such that $\kE ( \psi )_{ \e{n}{1} }  ( h_{n,1, e'' }^{1,1, in } ( x ) ) = 0$.  Since $h_{n,1, e' }^{1,1,in } ( \lambda_{2}(x) ) = \kE ( \psi )_{ \e{n}{1} } ( h_{n,1,e''}^{1,1,in} ( x ) ) = 0$, by Equation (\ref{exact1}), $\lambda_{2} ( x ) = n f_{1,1}^{ e'} (y)$ for some $y \in F_{1,1}^{e'}$.  
  Since 
  \begin{align*}
    \lambda_{2} ( x ) 
    &= \big( p_1^{n_{1,1}} x_{1,1} , 
    p_1^{n_{1,2}} x_{1,2} , 
    \ldots,
    p_1^{n_{1,k_1}} x_{1,k_1} , \\
    &\quad\quad p_2^{n_{2,1}} x_{2,1} , 
    p_2^{n_{2,2}} x_{1,2} , 
    \ldots,
    p_2^{n_{2,k_2}} x_{2,k_2} , \\
    &\quad\quad\ldots, 
    p_\ell^{n_{\ell,1}} x_{\ell,1} , 
    p_\ell^{n_{\ell,2}} x_{\ell,2} , 
    \ldots,
    p_\ell^{n_{\ell,k_\ell}} x_{\ell,k_\ell} , \\
    &\quad\quad 0 \big) \\
    &= p_i^{n_{i,j}}
    \big( y_{1,1} , y_{1,2} , \ldots, y_{1,k_1} , \\
    &\qquad\qquad y_{2,1} , y_{2,2} , \ldots, y_{2,k_2}, \\
    &\qquad\qquad\ldots,
    y_{\ell,1} , y_{\ell,2}, \ldots, y_{\ell,k_\ell} , \\
    &\qquad\qquad 0 \big),
  \end{align*}
  we have that
  \begin{enumerate}
  \item 
    $x_{t,v} = p_i^{n_{i,j}}z_{t,v}$, 
    for $t \neq i$, 
  \item 
    $x_{i,v} = p_i^{n_{i,j} - n_{i,v}} y_{i,v}$,
    for $v < j$, 
  \item 
    $p_i^{n_{i,v}}x_{i,v}
    =p_i^{n_{i,j}}(p_i^{n_{i,v}-n_{i,j}}x_{i,v})$, 
    for $v \geq j$, 
    where $p_i^{n_{i,v}-{n_{i,j}}} x_{i,v}=y_{i,v}$ 
    is an element of $\Z^{m_{i,v}}$.  
  \end{enumerate}
  The lemma now follows.
\end{proof}

\begin{lemma}\label{l:red1}
Let $e_{2} \colon \mathfrak{B}_{0} \hookrightarrow \mathfrak{B}_{1} \twoheadrightarrow \mathfrak{B}_{2}$ be an extension of separable $C^{*}$-algebras.  If $\alpha$ is an element of $\Hom_{ \Lambda } ( \underline{K}_{\mathcal{E} } ( e_{1} ) , \underline{K}_{\mathcal{E}} ( e_{2} ) )$ and $\alpha_{ \e{p_{i}^{ n_{ i , j } }}{4} }  = 0$ for all $i= 1, \dots, \ell$ and $j = 1, \dots, k_{i}$ and $\alpha_{\F{1}{k} } = 0$ for all $k=0,1,\dots, 5$, then $\alpha_{ \e{n}{4} } = 0$ for all $n\in\N_{\geq 2}$.  
Consequently, if $\alpha \in \Hom_{ \Lambda } ( \underline{K}_{\mathcal{E} } ( \mathsf{S}e_{1} ) , \underline{K}_{\mathcal{E}} ( e_{2} ) )$ and $\alpha_{ \e{p_{i}^{ n_{ i , j } }}{1} } = 0$ for all $i= 1, \dots, \ell$ and $j = 1, \dots, k_{i}$ and $\alpha_{ \F{1}{k} } = 0$ for all $k=0,1,\dots, 5$, then $\alpha_{ \e{n}{1} }  = 0$ for all $n\in\N_{\geq 2}$.
\end{lemma}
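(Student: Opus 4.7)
The plan combines the cyclic six-term exact sequences of Theorem~\ref{err:idealkthy} with the structural maps $\chi, \omega, \kappa, \varkappa$ from Lemma~\ref{l:bmaps}, exploiting the vanishing $F_{1,3}^{e_1} = K_1(\mathfrak{A}_0) = 0$ built into the Section~3 hypothesis. First, the $h^{1,n}$ cyclic six-term at index $i=4$ degenerates into
\[ F_{1,0}^{e_1} \xrightarrow{h^{1,n,in}_{n,4,e_1}} H_{n,4}^{e_1} \xrightarrow{h^{1,n,out}_{n,4,e_1}} F_{n,5}^{e_1} \to 0, \]
and, combined with the last six-term of Theorem~\ref{err:idealkthy} and freeness of $G_5 = F_{1,5}^{e_1}$, forces $\beta^{e_1}_{n,2}=0$, yielding
\[ 0 \to F_{1,5}^{e_1}/n F_{1,5}^{e_1} \xrightarrow{\rho^{e_1}_{n,5}} F_{n,5}^{e_1} \xrightarrow{\beta^{e_1}_{n,5}} G_2[n] \to 0, \]
where $G_2[n]$ is the $n$-torsion subgroup of $G_2 = F_{1,2}^{e_1}$.

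Next, using diagram~\eqref{d2} of Theorem~\ref{err:idealkthy} at $i=4$, which gives $h^{1,n,out}_{n,4,e_1}\circ h^{1,1,in}_{n,4,e_1} = \rho^{e_1}_{n,5}$ on $F_{1,5}^{e_1}$, any $x \in H_{n,4}^{e_1}$ annihilated by $\beta^{e_1}_{n,5}\circ h^{1,n,out}_{n,4,e_1}$ takes the form $h^{1,1,in}_{n,4,e_1}(w) + h^{1,n,in}_{n,4,e_1}(y)$ with $w \in F_{1,5}^{e_1}$ and $y \in F_{1,0}^{e_1}$, so $\alpha(x)=0$ by $\alpha_{\F{1}{5}} = \alpha_{\F{1}{0}} = 0$. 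Hence it suffices to produce, for every $g \in G_2[n]$, a witness $x_g \in H_{n,4}^{e_1}$ mapping to $g$ under $\beta^{e_1}_{n,5}\circ h^{1,n,out}_{n,4,e_1}$ on which $\alpha$ vanishes; subtracting $x_g$ from an arbitrary $x$ reduces to the previous case.

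The torsion $G_2[n]$ decomposes into $p$-primary pieces indexed by $p_i\mid n$, and further into contributions from each summand $(\Z_{p_i^{n_{i,j}}})^{m_{i,j}}$ of $G_2$. For each such generator, the first paragraph applied to $n=p_i^{n_{i,j}}$ makes $\beta^{e_1}_{p_i^{n_{i,j}},5}\circ h^{1,p_i^{n_{i,j}},out}_{p_i^{n_{i,j}},4,e_1}$ surjective onto $G_2[p_i^{n_{i,j}}]$, so I can pick a witness $x_0 \in H_{p_i^{n_{i,j}},4}^{e_1}$ hitting it. I then transport $x_0$ into $H_{n,4}^{e_1}$ via Lemma~\ref{l:bmaps}: if $v_{p_i}(n)\geq n_{i,j}$, use $\chi^{e_1}_{n,p_i^{n_{i,j}},4}$ directly; if $v_{p_i}(n) < n_{i,j}$, pass through $H_{N,4}^{e_1}$ for $N = n\, p_i^{n_{i,j}-v_{p_i}(n)}$ via $\omega^{e_1}_{n,N,4}\circ \chi^{e_1}_{N,p_i^{n_{i,j}},4}$. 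The intertwining identities $\beta^e_{mn,i}\circ\varkappa^e_{mn,m,i}=\beta^e_{m,i}$ and $\beta^e_{n,i}\circ\kappa^e_{n,mn,i}=m\beta^e_{mn,i}$ from Lemma~\ref{l:bmaps} then identify the image in $G_2[n]$ as precisely the required generator. Since $\alpha$ is $\Lambda$-linear it commutes with $\chi$ and $\omega$, and it vanishes on $H_{p_i^{n_{i,j}},4}^{e_1}$ by hypothesis, so $\alpha(x_g)=0$. The main obstacle here is the multiplicity bookkeeping, especially in the mixed case $v_{p_i}(n)\neq n_{i,j}$.

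The consequently statement follows by running the same proof on $\mathsf{S}e_1$ at index $i=1$ in place of index $i=4$: the degeneracy now comes from $F_{1,0}^{\mathsf{S}e_1} = K_1(\mathfrak{A}_0) = 0$, the free group $G_5$ reappears as $F_{1,2}^{\mathsf{S}e_1} = K_1(\mathfrak{A}_2)$, and the torsion $G_2[n]$ plays the same role, now appearing as $\im\!\big(\beta^{\mathsf{S}e_1}_{n,2}\colon F_{n,2}^{\mathsf{S}e_1}\to F_{1,5}^{\mathsf{S}e_1}\big)$; the prime-power hypotheses on $\mathfrak{e}_{p_i^{n_{i,j}},1}$ substitute verbatim for those on $\mathfrak{e}_{p_i^{n_{i,j}},4}$.
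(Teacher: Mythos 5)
Your proof is correct and rests on the same ingredients as the paper's: the degenerate six-term sequences coming from $F_{1,3}^{e_1}=0$, Diagram~(\ref{d2}) of Theorem~\ref{err:idealkthy} to absorb the free part into $\im(h^{1,1,in}_{n,4,e_1})+\im(h^{1,n,in}_{n,4,e_1})$, and the transport maps of Lemma~\ref{l:bmaps} to move witnesses out of the groups $H_{p_i^{n_{i,j}},4}^{e_1}$ on which $\alpha$ is assumed to vanish. The organization differs, though. The paper bootstraps in three stages --- first arbitrary powers $p_i^r$ of the listed primes, then products of such powers, then general $n$ --- so that at each stage the needed divisibility between the two moduli holds and only one of $\kappa,\varkappa$ (resp.\ $\omega,\chi$) is invoked at a time; you instead treat a general $n$ in a single pass by routing the witness for a generator of $(\Z_{p_i^{n_{i,j}}})^{m_{i,j}}$ through the auxiliary modulus $N=n\,p_i^{n_{i,j}-v_{p_i}(n)}$ when $p_i^{n_{i,j}}\nmid n$. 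The multiplicity check you flag does close: the factor $N/n=p_i^{n_{i,j}-v_{p_i}(n)}$ produced by $\beta_{n,5}^{e_1}\circ\kappa_{n,N,5}^{e_1}=(N/n)\,\beta_{N,5}^{e_1}$ is exactly the index of the $n$-torsion subgroup inside $(\Z_{p_i^{n_{i,j}}})^{m_{i,j}}$, so the transported witnesses cover all of $G_2[n]$, and $\Lambda$-naturality kills $\alpha$ on them. Your version trades the paper's three-claim induction for a single, slightly more delicate computation; both are valid, and your reduction ``it suffices to hit every element of $G_2[n]$ by a witness killed by $\alpha$'' is a clean way to package what the paper does element by element.
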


\begin{proof}
\emph{Claim 1:} If $\alpha_{ \e{k}{4} }  = 0$ for all $k = p_{i}^{r}$, then $\alpha_{ \e {n}{4} }  = 0$ for all $n = p_{1}^{s_{1}} \cdots p_{\ell}^{s_{\ell}}$.  

\bigskip

Let $z \in H_{n,4}^{e_{1}}$ with $n = p_{1}^{s_{1}} \cdots p_{\ell}^{s_{\ell}}$.  Let $x = h_{n, 4, e_{1}}^{1, n, out } ( z ) \in F_{n, 5}^{e_{1}}$ and let $\overline{x} = \beta_{n, 5 }^{ e_{1} } ( x ) \in F_{1, 2}^{e_{1}}$.  Note that $\overline{x} = ( \overline{x}_{1}, \dots, \overline{x}_{\ell} )$, where $\overline{x}_{i}$ is in 
\begin{equation*}
0 \oplus \cdots \oplus 0 \oplus ( \Z_{ p_{i}^{n_{i, 1}} } )^{m_{i,1}} \oplus \cdots \oplus ( \Z_{ p_{i}^{n_{i, k_{i}}} } )^{m_{i,k_{i}}} \oplus 0 \oplus \cdots \oplus 0.
\end{equation*}
Since $p_{1}^{s_{1}} \cdots p_{\ell}^{s_{\ell}} \overline{x}_{i} = 0$ and $\gcd ( p_{i} , p_{t} ) = 1$ for all $i \neq t$, we have that $p_{i}^{s_{i}} \overline{x}_{i} = 0$.  By Theorem \ref{err:idealkthy}, there exists $y_{i} \in F_{p_{i}^{s_{i} },5}^{e_{1}}$ such that $\beta_{ p_{i}^{s_{i} } , 5 }^{ e_{1 } } ( y_{i} ) = \overline{x}_{i}$. 

By Lemma \ref{l:bmaps}
\begin{align*}
\beta_{n, 5 }^{ e_{1} } \left(  \sum_{ i = 1}^{\ell} \varkappa_{n, p_{i}^{s_{i} } , 5}^{e_{1} } ( y_{i} ) \right) &= \sum_{ i = 1}^{\ell} \beta_{ p_{i}^{s_{i} } , 5 }^{ e_{1 } } ( y_{i} ) 
	= \overline{x}= \beta_{n, 5 }^{ e_{1} } ( x ).
\end{align*}
By Theorem \ref{err:idealkthy}, there exists $a \in F_{1, 5}^{e_{1}}$ such that $x = \rho_{n, 5 }^{ e_{1} } (a ) + \sum_{ i = 1}^{\ell} \varkappa_{n, p_{i}^{s_{i} } , 5}^{e_{1} } ( y_{i} )$.  Since $F_{1,3}^{e_{1}} = 0$, by Theorem \ref{err:idealkthy}, we have that $h_{p_{i}^{s_{i}}, 4, e_{1} }^{1, p_{i}^{s_{i} } , out }$ is surjective.  Thus, there exists $b_{i} \in H_{ p_{i}^{s_{i} }, 4 }^{e_{1}}$ such that $h_{p_{i}^{s_{i}}, 4, e_{1} }^{1, p_{i}^{s_{i} } , out } (b_{i}) = y_{i}$.  By Lemma \ref{l:bmaps}, 
\begin{align*}
h_{n, 4, e_{1} }^{1, n, out} \left( \sum_{ i = 1}^{\ell} \chi_{n, p_{i}^{s_{i} } , 4 }^{e_{1}} (b_{i} ) \right) &= \sum_{ i = 1}^{ \ell } \varkappa_{n, p_{i}^{s_{i} } , 5}^{e_{1}} \left(  h_{p_{i}^{s_{i}}, 4, e_{1} }^{1, p_{i}^{s_{i} } , out } (b_{i}) \right) = \sum_{ i = 1}^{\ell} \varkappa_{n, p_{i}^{s_{i} } , 5}^{e_{1} } ( y_{i} ).
\end{align*}

By Diagram (\ref{d2}) in Theorem \ref{err:idealkthy}, $\rho_{n, 5 }^{e_{1} } ( a ) = h_{n, 4, e_{1}}^{1, n, out } ( h_{n, 4, e_{1 } }^{1, 1, in } ( a ) )$.  Therefore, 
\begin{align*}
 h_{n, 4, e_{1}}^{1, n, out } \left(  \sum_{ i = 1}^{\ell} \chi_{n, p_{i}^{s_{i} } , 4 }^{e_{1}} (b_{i} ) + h_{n, 4, e_{1 } }^{1, 1, in } ( a ) \right) 
 		= \sum_{ i = 1}^{\ell} \varkappa_{n, p_{i}^{s_{i} } , 5}^{e_{1} } ( y_{i} ) + \rho_{n, 5 }^{ e_{1} } ( a ) = x = h_{n, 4, e_{1}}^{1, n, out } ( z ).
\end{align*}
Hence, by Theorem \ref{err:idealkthy}, there exists $v \in F_{1, 0}^{e_{1}}$ such that $z =  \sum_{ i = 1}^{\ell} \chi_{n, p_{i}^{s_{i} } , 4 }^{e_{1}} (b_{i} ) + h_{n, 4, e_{1 } }^{1, 1, in } ( a ) + h_{n, 4, e_{1}}^{1, n, in} ( v )$.  Since $\alpha$ is zero on $H_{p_{i}^{s_{i}}, 4}^{e_{1}}$, $F_{1, 0}^{e_{1}}$, and $F_{1, 5}^{e_{1}}$, we have that $\alpha_{\e{n}{4} } ( z ) = 0$.

\bigskip

\emph{Claim 2:}  If $\alpha_{\mathfrak{e}_{k,4}}= 0$ for all $k = p_{1}^{s_{1}} \cdots p_{\ell}^{s_{\ell}}$, then $\alpha_{\mathfrak{e}_{n,4}} = 0$ for all $n\in\N_{\geq 2}$.

\bigskip

Suppose $n = p_{1}^{s_{1}} \cdots p_{\ell}^{s_{\ell}} m$, where $\gcd ( m, p_{i} ) = 1$.  Set $k = p_{1}^{s_{1}} \cdots p_{\ell}^{s_{\ell}}$.  Let $z \in H_{n,4}^{e_{1}}$.  Set $x = h_{n, 4, e_{1}}^{1,n, out} (z) \in F_{n, 5}^{e_{1}}$ and let $\overline{x} = \beta_{n, 5 }^{ e_{1}} (x) \in F_{1, 2}^{e_{1}}$.  Since $n \overline{x} = 0$ and since $\gcd ( m , p_{i} ) = 1$, $k \overline{x} = 0$.  Using a similar argument as in Claim 1 we get that $z = h_{n, 4, e_{1}}^{1,1, in } ( a ) + h_{n, 4, e_{1} }^{1, n, in} ( v ) + \chi_{n, k, 4}^{e_{1}} ( y )$ for some $v \in F_{1,0}^{e_{1}}$, $a \in F_{1, 5}^{e_{1}}$, and $y \in H_{k, 4}^{e_{1}}$.  Since $\alpha$ is zero on $F_{1,0}^{e_{1}}$, $F_{1, 5}^{e_{1}}$, and $H_{k, 4}^{e_{1}}$, we have that $\alpha_{\mathfrak{e}_{n,4}} ( z ) = 0$.

\bigskip

\emph{Claim 3:}  $\alpha_{ \mathfrak{e}_{n,4} } = 0$ for all $n = p_{i}^{r}$.

\bigskip

Let $n=p_i^r$ for some $i=1,\ldots,\ell$ and some $r\in\N$. 
We can assume that $r\neq n_{i,j}$ for every $j=1,\ldots,k_i$. Let $t$ be such that $n_{i,t} < r < n_{i,t+1}$ (and $t=0$ if $r<n_{i, 1}$ and $t=k_i$ if $n_{i, k_i} < r$). 
Let $z \in H_{n, 4}^{e_{1}}$ and set $x = h_{n, 4, e_{1}}^{1, n, out} ( z ) \in F_{n, 5}^{e_{1}}$.  Note that there exists $y \in F_{1, 5}^{e_{1}}$ and $z_{1}, \dots, z_{k_{i}} \in F_{n, 5}^{e_1}$ such that $x = \rho_{n, 5 }^{ e_{1} } (y) + \sum_{ j = 1}^{k_{i}} z_{j}$ and $\beta_{n, 5 }^{ e_{1} } ( z_{j} ) \in F_{1, 2}^{e_{1}}$ is an element of $0 \oplus \cdots \oplus 0 \oplus ( \Z_{p_{i}^{n_{i, j }} } )^{m_{i, j}} \oplus 0 \oplus \cdots \oplus 0$.  Set $n_{j} = p_{i}^{n_{i,j}}$.  We will show that either $z_{j} \in \im ( \kappa_{n, n_{j}, 5}^{e_{1}} \circ h_{n_{j}, 4, e_{1} }^{1, n_{j} , out } ) + \im(\rho_{n, 5 }^{ e_{1}} )$ or $z_{j} \in \im ( \varkappa_{n, n_{j}, 5}^{e_{1}} \circ h_{n_{j}, 4, e_{1} }^{1, n_{j} , out } ) + \im(\rho_{n, 5 }^{ e_{1}} )$ depending on whether $r<n_{i,j}$ or $r > n_{i,j}$.

First note that $n \beta_{n, 5 }^{ e_{1} } (z_{j} ) = 0$.  Note that the $\ker( \ftn{ \times n }{ F_{1,2}^{e_{1}} }{ F_{1,2}^{e_{1}} } )$ is
\begin{equation*}
0 \oplus \cdots \oplus 0 \oplus (\mathbb{Z}_{p_i^{n_{i,1}}})^{m_{i,1}}\oplus\cdots\oplus (\mathbb{Z}_{p_i^{n_{i,t}}})^{m_{i,t}}\oplus \langle p^{n_{i,t+1}-r}_i\rangle^{m_{i,t+1}}\oplus \cdots\oplus \langle p^{n_{i,k_i}-r}_i\rangle^{m_{i,k_i}}\oplus 0\oplus\cdots\oplus 0.
\end{equation*}
Also, since $F_{1,3}^{e_{1}} = 0$, by Theorem \ref{err:idealkthy}, we have that $\ftn{ h_{k,4, e_{1}}^{1, k, out} }{ H_{k, 4}^{e_{1}} }{ F_{k,5}^{e_{1}} }$ is surjective.  Hence, $\im ( \kappa_{n, n_{j}, 5}^{e_{1}} \circ h_{n_{j}, 4, e_{1} }^{1, n_{j} , out } ) = \im ( \kappa_{n, n_{j}, 5}^{e_{1}} )$ and $\im ( \varkappa_{n, n_{j}, 5}^{e_{1}} \circ h_{n_{j}, 4, e_{1} }^{1, n_{j} , out } ) = \im ( \varkappa_{n, n_{j}, 5}^{e_{1}} )$, when $n<n_j$ respectively $n>n_j$.

Suppose $r < n_{i, j}$.  Then $\beta_{n, 5 }^{ e_{1} } ( z_{j} ) = p_{i}^{n_{i, j}-r} g_{j}$ for some $g_{j} \in 0 \oplus \cdots \oplus 0 \oplus ( \Z_{p_{i}^{n_{i, j }} } )^{m_{i, j}} \oplus 0 \oplus \cdots \oplus 0$.  Since $p_{i}^{n_{i,j} } g_{j} = 0$, by Theorem \ref{err:idealkthy}, there exists $d_{j} \in F_{n_{j}, 5 }^{e_{1}}$ such that $\beta_{n_{j}, 5 }^{ e_{1} } ( d_{j} ) = g_{j}$.  By Lemma \ref{l:bmaps}, 
\begin{equation*}
\beta_{n, 5 }^{ e_{1} } ( \kappa_{n, n_{j} , 5 }^{e_{1}} ( d_{j} ) ) = p_{i}^{n_{i,j} - r } \beta_{n_{j}, 5 }^{ e_{1} } ( d_{j} ) = p_{i}^{n_{i,j} - r }  g_{j} = \beta_{n, 5 }^{ e_{1} } (z_{j}).
\end{equation*}
Hence, $z_{j} -  \kappa_{n, n_{j} , 5 }^{e_{1}} ( d_{j} )  \in \ker ( \beta_{n,5 }^{ e_{1} } ) = \im ( \rho_{n, 5 }^{ e_{1} } )$.  

Suppose $r > n_{i, j}$.  Then $n_{j} \beta_{n, 5 }^{ e_{1} } ( z_{j} ) = 0$.  By Theorem \ref{err:idealkthy}, there exists $d_{j} \in F_{n_{j}, 5 }^{e_{1}}$ such that $\beta_{n_{j} , 5 }^{ e_{1} } ( d_{j} ) =  \beta_{n, 5 }^{ e_{1} } ( z_{j} )$.  By Lemma \ref{l:bmaps},
\begin{equation*}
\beta_{n, 5 }^{ e_{1} } ( \varkappa^{e_{1}}_{n, n_{j} , 5} ( d_{j} ) ) = \beta_{n_{j} , 5 }^{ e_{1} } ( d_{j} ) = \beta_{n, 5 }^{ e_{1} } ( z_{j} ).
\end{equation*}
Hence, $z_{j} - \varkappa^{e_{1}}_{n, n_{j} , 5} ( d_{j} ) \in \ker ( \beta_{n,5 }^{ e_{1} } ) = \im ( \rho_{n, 5 }^{ e_{1} } )$.

By Lemma \ref{l:bmaps}, $h_{n, 4, e_{1}}^{1, n, out} \circ \omega_{n, n_{j}, 4}^{e_{1}} = \kappa_{n, n_{j} , 5}^{e_{1}} \circ h_{n_{j}, 4, e_{1}}^{1, n_{j}, out }$ when $r < n_{i,j}$ and $\varkappa_{n, n_{j} , 5 } \circ h_{n_{j}, 4, e_{1}}^{ 1, n_{j}, out } = h_{n, 4, e_{1}}^{1, n, out} \circ \chi_{n, n_{j}, 4}^{e_{1} }$ when $r\geq n_{i,j}$.  Consequently,
\begin{equation*}
h^{1,n,out}_{n,4,e_1}(z) = x \in \im ( \rho_{n, 5 }^{ e_{1} } ) + \sum_{ j = t+1}^{k_{i}} \im( h_{n, 4, e_{1}}^{1, n, out} \circ \omega_{n, n_{j}, 4}^{e_{1}} ) + \sum_{ j = 1}^{t} \im ( h_{n, 4, e_{1}}^{1, n, out} \circ \chi_{n, n_{j}, 4}^{e_{1} } ). 
\end{equation*}
By Diagram (\ref{d2}) in Theorem \ref{err:idealkthy}, $h^{1,n,out}_{n,4,e_1}\circ h^{1,1,in}_{n,4,e_1}=\rho_{n,5}^{e_1}$.  Hence,  
\begin{align*}
z &\in \im ( h_{n,4, e_{1}}^{1,1, in} ) + \sum_{ j = t+1}^{k_{i}} \im( \omega_{n, n_{j}, 4}^{e_{1}} ) + \sum_{ j = 1}^{t} \im ( \chi_{n, n_{j}, 4}^{e_{1} } ) + \ker( h_{n, 4, e_{1} }^{1,n, out } ) \\
	&\qquad\qquad =  \im ( h_{n,4, e_{1}}^{1,1, in} ) + \sum_{ j = t+1}^{k_{i}} \im( \omega_{n, n_{j}, 4}^{e_{1}} ) + \sum_{ j = 1}^{t} \im ( \chi_{n, n_{j}, 4}^{e_{1} } ) + \im( h_{n, 4, e_{1}}^{1, n, in } ).
\end{align*}
Therefore, $\alpha_{\e{n}{4}} ( z ) = 0$ since $\alpha$ is zero on $F_{1,0}^{e_{1}}$, $F_{1,5}^{e_{1}}$, and $H_{n_{j}, 4}^{e_{1}}$, for all $j=1,2,\ldots,k_i$.
\end{proof}

\begin{lemma}\label{l:red2}
Let $e_2 \colon \mathfrak{B}_{0} \hookrightarrow \mathfrak{B}_{1} \twoheadrightarrow \mathfrak{B}_{2}$ be an extension such that $K_{1} ( \mathfrak{B}_{2} )$ is torsion free and $K_{1} ( \mathfrak{B}_{0} ) =0$.  If $\alpha$ is an element $\Hom_{ \Lambda } ( \underline{K}_{\mathcal{E}} ( e_{1} ), \underline{K}_{\mathcal{E}} ( e_{2} ) )$ such that $\alpha_{\mathfrak{f}_{1,k}}=0$ and $\alpha_{\mathfrak{e}_{n,4}}=0$ for all $n\in\N_{\geq 2}$ and for all $k=0,1, \dots, 5$, then $\alpha$ is zero.  Consequently, if $\alpha \in \Hom_{ \Lambda } ( \underline{K}_{\mathcal{E} } ( \mathsf{S}e_{1} ) , \underline{K}_{\mathcal{E}} ( \mathsf{S} e_{2} ) )$, then $\alpha = 0$ if $\alpha_{ \e{n}{1} } = 0$ for all $n\in\N_{\geq 2}$ and $\alpha_{ \F{1}{k} } = 0$ for all $k=0,1,\dots, 5$.
\end{lemma}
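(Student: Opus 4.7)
The plan is to show each component $\alpha_{\F{n}{i}}$ and $\alpha_{\e{n}{i}}$ of the $\Lambda$-homomorphism $\alpha$ vanishes, exploiting two structural consequences of the hypotheses on $e_{2}$: $F_{1,3}^{e_{2}} = K_{1}(\mathfrak{B}_{0}) = 0$, and $F_{1,4}^{e_{2}} = K_{1}(\mathfrak{B}_{1})$ is torsion free (since it embeds into the torsion free $F_{1,5}^{e_{2}} = K_{1}(\mathfrak{B}_{2})$ via the six term sequence). Fed into the three cyclic six term exact sequences of Theorem~\ref{err:idealkthy} read on $e_{2}$, these vanishings collapse several connecting maps and make certain natural transformations injective or surjective on $e_{2}$.

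I would start with $\alpha_{\F{n}{4}} = 0$. For $y \in F_{n,4}^{e_{1}}$, naturality of $h^{n,1,in}$ together with the hypothesis $\alpha_{\e{n}{4}} = 0$ gives $h_{n,4,e_{2}}^{n,1,in}(\alpha(y)) = \alpha(h_{n,4,e_{1}}^{n,1,in}(y)) = 0$; the second cyclic six term sequence at $i=4$ on $e_{2}$ reads $F_{1,3}^{e_{2}} \to F_{n,4}^{e_{2}} \to H_{n,4}^{e_{2}}$, so $h_{n,4,e_{2}}^{n,1,in}$ is injective and $\alpha(y) = 0$. The companion case $\alpha_{\F{n}{3}} = 0$ is immediate: naturality of $\beta$ puts $\alpha(y) \in \im \rho_{n,3}^{e_{2}}$, which is zero because $F_{1,3}^{e_{2}} = 0$.

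Next I would handle the remaining components in tandem. For $\alpha_{\e{n}{1}}$, the third cyclic six term sequence at $i=1$ on $e_{2}$ yields an injection $h_{n,1,e_{2}}^{1,n,out}: H_{n,1}^{e_{2}} \hookrightarrow F_{n,2}^{e_{2}}$ (again by $F_{1,3}^{e_{2}} = 0$), reducing the claim to $\alpha_{\F{n}{2}} = 0$. For $\alpha_{\F{n}{2}}$: naturality of $\rho$ together with $\alpha_{\F{1}{2}} = 0$ shows this map kills $\im \rho_{n,2}^{e_{1}}$ and hence factors through $F_{n,2}^{e_{1}}/\im \rho_{n,2}^{e_{1}} \cong F_{1,5}^{e_{1}}[n]$, while naturality of $\beta$ forces its image into $\im \rho_{n,2}^{e_{2}} \cong F_{1,2}^{e_{2}}/n$; combining this with Diagrams~(\ref{d1})--(\ref{d3}) of Theorem~\ref{err:idealkthy}, the already established vanishings $\alpha_{\F{n}{3}} = \alpha_{\F{n}{4}} = 0$, and naturality of the various $h$-maps, one can force this residual map to be zero. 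Analogous dovetailed arguments handle the other $\alpha_{\e{n}{i}}$'s and $\alpha_{\F{n}{i}}$'s at the remaining indices.

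The main obstacle will be the intricate interdependence of these components: each vanishing of an $\alpha_{\e{n}{i}}$ typically rests on that of a companion $\alpha_{\F{n}{j}}$ and conversely, so picking the right order of implications through the cyclic six term sequences and Diagrams~(\ref{d1})--(\ref{d3}) must be done with care. The ``consequently'' clause follows from the main statement formally: the suspension isomorphism (as used in Lemma~\ref{l:suspension}), combined with the index shifts $F_{n,i}^{\mathsf{S}e_{j}} \cong F_{n,i+3}^{e_{j}}$ and $H_{n,i}^{\mathsf{S}e_{j}} \cong H_{n,i+3}^{e_{j}}$ modulo $6$, transforms the hypothesis $\alpha_{\e{n}{1}} = 0$ on $\mathsf{S}e$ into $\alpha_{\e{n}{4}} = 0$ on $e$, while the family $\{\F{1}{k}\}_{k=0}^{5}$ is merely cyclically permuted, so the first half of the lemma applies.
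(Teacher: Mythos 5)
Your overall strategy --- killing $\alpha$ component by component via naturality combined with injectivity of suitable maps on the $e_2$ side or surjectivity on the $e_1$ side --- is the same as the paper's, and your treatments of $\alpha_{\F{n}{4}}$, $\alpha_{\F{n}{3}}$, the reduction of $\alpha_{\e{n}{1}}$ to $\alpha_{\F{n}{2}}$, and the degree-three suspension shift for the ``consequently'' clause are all correct. However, there is a genuine gap in your framing: you announce that everything will follow from ``the hypotheses on $e_2$'' and never invoke the standing assumptions on $e_1$ coming from Assumptions~\ref{assume1} and~\ref{assume2}, namely $F_{1,3}^{e_1}=K_1(\mathfrak{A}_0)=0$ and $F_{1,4}^{e_1},F_{1,5}^{e_1}$ free. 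These are indispensable: they make $\rho_{n,i}^{e_1}$ surjective for $i=0,1,2$ and make $h^{1,1,in}_{n,0,e_1}$, $h^{n,1,in}_{n,1,e_1}$ and $h^{1,n,out}_{n,4,e_1}$ surjective, which disposes of $F_{n,0}^{e_1}$, $F_{n,1}^{e_1}$, $F_{n,2}^{e_1}$, $F_{n,5}^{e_1}$, $H_{n,0}^{e_1}$ and $H_{n,1}^{e_1}$ in one stroke. Your own argument for $\alpha_{\F{n}{2}}$ exhibits the problem: you correctly reduce to a map defined on $F_{n,2}^{e_1}/\im\rho_{n,2}^{e_1}\cong F_{1,5}^{e_1}[n]$, but then appeal vaguely to the diagrams to kill the residual map; the diagrams alone will not do this, whereas the single observation that $F_{1,5}^{e_1}=K_1(\mathfrak{A}_2)$ is torsion free --- a property of $e_1$, not of $e_2$ --- makes that quotient zero outright.

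The second gap is that the seven components you dismiss as ``analogous'' are not all analogous. The cases $H_{n,5}^{e_1}$ and $H_{n,3}^{e_1}$ are the delicate ones and require the commuting triangles of Theorem~\ref{err:idealkthy}: for $H_{n,5}^{e_1}$ one uses surjectivity of $\tilde f_{1,1}^{e_1}$ (from $F_{1,3}^{e_1}=0$) together with Diagram~(\ref{d2}) to write any $x$ as a sum of elements of $\im(h^{1,n,in}_{n,5,e_1})$ and $\im(h^{1,1,in}_{n,5,e_1})$, both images of groups on which $\alpha$ vanishes; for $H_{n,3}^{e_1}$ one shows that $h^{n,1,out}_{n,3,e_2}$ and $h^{1,1,out}_{n,3,e_2}$ both annihilate $\alpha(x)$, so $\alpha(x)=h^{1,1,in}_{n,3,e_2}(y)$ for some $y\in F_{1,4}^{e_2}$, and then Diagram~(\ref{d1}) gives $\tilde f_{1,4}^{e_2}(y)=0$, whence $y=0$ by injectivity of $\tilde f_{1,4}^{e_2}$ (again from $F_{1,3}^{e_2}=0$). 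Neither of these is a one-line variant of the arguments you wrote out, so as it stands the proposal does not yet constitute a proof.
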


\begin{proof}
Suppose $\alpha$ satisfies the assumption of the first part of the lemma.  Since $F_{1, 4}^{e_{1}}$ and $F_{1,5}^{e_{1}}$ are torsion free and since $F_{1,3}^{e_{1}} = 0$, by Theorem \ref{err:idealkthy}, $\ftn{ \rho_{n, i }^{ e_{1} } }{ F_{1, i}^{e_{1}} }{ F_{n, i}^{e_{1}} }$ is surjective for $i = 0, 1, 2$, and $\ftn{ h_{n,0, e_1}^{1,1, in} }{ F_{1, 1}^{e_{1}} }{ H_{n,0}^{e_{1}} }$, $\ftn{ h_{n, 1, e_{1} }^{n,1, in } }{ F_{n,1}^{e_{1}} }{ H_{n,1}^{e_{1}} }$, and $\ftn{ h_{n,4, e_{1} }^{1,n, out } }{ H_{n,4}^{e_{1} } }{ F_{n,5}^{e_{1}} }$ are surjective homomorphisms. Hence, $\alpha$ is zero on $F_{n, 0}^{e_{1}}$, $F_{n,1}^{e_{1}}$, $F_{n,2}^{e_{1}}$, $F_{n,5}^{e_{1}}$, $H_{n,0}^{e_{1}}$, and $H_{n,1}^{e_{1}}$ since $\alpha$ is zero on $F_{1,0}^{e_1}$, $F_{1,1}^{e_1}$, $F_{1, 2}^{e_1}$, and $H_{n,4}^{e_{1}}$.

Since $F_{1, 3}^{e_{1}} = F_{1, 3}^{e_{2}} = 0$, by Theorem \ref{err:idealkthy}, $h_{n, 2, e_{i}}^{1,1,out}$, $\beta_{n, 3 }^{ e_{i} }$, $h_{n, 4, e_{i} }^{n,1, in}$ are injective homomorphisms.  Let $x \in H_{n,2}^{e_{1}}$, $a \in F_{n,3}^{e_{1} }$, and $b \in F_{n,4}^{e_{1}}$.  Then 
\begin{align*}
h_{n,2, e_{2} }^{1,1,out} ( \alpha_{ \e{n}{ 2} } (x) ) &= \alpha_{ \F{1}{ 5 } } ( h_{n,2, e_{1} }^{1,1,out} (x) ) =0, \\
\beta_{n, 3 }^{ e_{2} } ( \alpha_{ \F{n}{ 3 } } (a) )&= \alpha_{ \F{1}{ 0 }  } ( \beta_{n, 3 }^{ e_{1} } (a) ) =0, \\
h_{n,4, e_{2} }^{n,1, in } ( \alpha_{ \F{n}{ 4 } } ( b ) ) &= \alpha_{\e{n}{ 4} }( h_{n,4, e_{1} }^{n,1, in }(b) ) =0. 
\end{align*}  
Hence, $\alpha_{ \e{n}{ 2}  } (x) =0$, $\alpha_{ \F{n}{ 3 }  } (a) = 0$, and $\alpha_{ \F{n}{ 4 }  } ( b ) = 0$.  Thus, $\alpha$ is zero on $H_{n,2}^{e_{1}}$, $F_{n,3}^{e_{1}}$, and $F_{n,4}^{e_{1}}$.

We are left with showing that $\alpha$ is zero on $H_{n,3}^{e_{1}}$ and $H_{n,5}^{e_{1}}$.  We first show that $\alpha$ is zero on $H_{n,5}^{e_{1}}$.  Let $x \in H_{n,5}^{e_{1}}$.  Since $F_{1,3}^{e_{1}} = 0$, by Theorem \ref{err:idealkthy}, there exists $y \in F_{1,1}^{e_{1}}$ such that $\tilde{f}_{1,1}^{e_1} ( y) = h_{n, 5, e_{1}}^{ 1, 1, out } ( x )$.  By Diagram (\ref{d2}) in Theorem \ref{err:idealkthy}, 
\begin{equation*}
h_{n, 5, e_{1} }^{1,1,out} ( h_{n,5, e_{1}}^{1,n,in} ( y ) ) = \tilde{f}_{1,1}^{e_1} ( y) = h_{n, 5, e_{1}}^{ 1, 1, out } ( x ).
\end{equation*}
So, $x \in \im (  h_{n,5, e_{1}}^{1,n,in} ) + \ker( h_{n, 5, e_{1}}^{ 1, 1, out } ) = \im (  h_{n,5, e_{1}}^{1,n,in} ) + \im( h_{n,5, e_{1}}^{1,1,in} )$.  Since $h_{n,5,e_{1}}^{1,n,in}$ and $h_{n,5,e_{1}}^{1,1,in}$ are functions on $F_{1,1}^{e_{1}}$ and $F_{1,0}^{e_{1}}$ respectively and since $\alpha$ is zero on $F_{1,1}^{e_{1}}$ and $F_{1,0}^{e_{1}}$, we have that that $\alpha_{ \e{n}{ 5}  } ( x ) = 0$.  Thus, $\alpha$ is zero on $H_{n,5}^{e_{1}}$.

Let $x \in H_{n,3}^{e_{1}}$.  First note that 
\begin{align*}
h_{n,3, e_{2} }^{n,1, out } ( \alpha_{ \e{ n }{ 3 } } (x) ) &= \alpha_{ \F{1}{ 5 } } ( h_{n,3, e_{1} }^{n,1, out } ( x ) ) = 0, \\
h_{n,3, e_{2} }^{1,1, out } ( \alpha_{ \e{ n }{ 3 }  } (x) ) &= \alpha _{ \F{1}{ 0 }  } ( h_{n,3, e_{1} }^{1,1, out } (x) ) = 0.
\end{align*}
Hence, by Theorem \ref{err:idealkthy}, there exists $y \in F_{1,4}^{e_{2}}$ such that $h_{n,3, e_{2}}^{1,1,in} (y) = \alpha_{ \e{ n }{ 3 }  } (x)$.  By Diagram (\ref{d1}) in Theorem \ref{err:idealkthy},
\begin{align*}
\tilde{f}_{1, 4}^{ e_{2} } (y) = h_{n,3, e_{2}}^{n,1, out} ( h_{n,3, e_{2}}^{1,1,in} (y) ) =  h_{n,3, e_{2}}^{n,1, out} ( \alpha_{ \e{ n }{ 3 } } (x) ) = 0.
\end{align*}
Since $F_{1,3}^{e_{2}} = 0$, by Theorem \ref{err:idealkthy} $\ftn{ \tilde{f}_{1, 4}^{ e_{2} } }{ F_{1,4}^{e_{2}} }{ F_{1, 5}^{e_{2}} }$ is injective.  Hence, $y =0$.  Thus, $\alpha_{\e{ n }{ 3 }  } (x) = 0$.  Therefore, $\alpha$ is zero on $H_{n,3}^{e_{1}}$.  
\end{proof}

\begin{theor}\label{t:inj}
Let $e_{2} \colon \mathfrak{B}_{0} \hookrightarrow \mathfrak{B}_{1} \twoheadrightarrow \mathfrak{B}_{2}$ be an extension of separable $C^{*}$-algebras 
with $K_{1} ( \mathfrak{B}_{0} ) = 0$ and $K_{1} ( \mathfrak{B}_{2} )$ torsion free.  Then the sequence
\begin{equation*}
\Hom_{ \Lambda} ( \underline{K}_{ \mathcal{E} } ( e ' ) , \underline{K}_{ \mathcal{E} } ( \mathsf{S} e_{2} ) ) \overset{ \psi^{*} }{ \rightarrow } \Hom_{ \Lambda } ( \underline{K}_{\mathcal{E} } ( e'' ) , \underline{K}_{\mathcal{E} } ( \mathsf{S} e_{2} ) ) \overset{ \phi^{*} }{ \rightarrow } \Hom_{ \Lambda } ( \underline{K}_{\mathcal{E} } ( \mathsf{S}e_{1} ) , \underline{K}_{ \mathcal{E} } (\mathsf{S}e_{2} ) ) 
\end{equation*}
induced by the exact sequence $\mathsf{S} e_{1} \overset{ \phi }{ \hookrightarrow } e'' \overset{\psi}{ \twoheadrightarrow} e'$ is exact. 
\end{theor}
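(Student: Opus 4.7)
The plan is as follows. I would first establish that $\phi^{*}\circ\psi^{*}=0$: since $\psi\circ\phi$ is the zero morphism of extensions (being the composition in a short exact sequence), functoriality of $\underline{K}_{\mathcal{E}}$ gives $\underline{K}_{\mathcal{E}}(\psi)\circ\underline{K}_{\mathcal{E}}(\phi)=0$, so $\phi^{*}(\psi^{*}(\gamma))=\gamma\circ\underline{K}_{\mathcal{E}}(\psi)\circ\underline{K}_{\mathcal{E}}(\phi)=0$ for every $\gamma$.

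For the inclusion $\ker\phi^{*}\subseteq\im\psi^{*}$, I would fix $\alpha\in\ker\phi^{*}$ and aim to produce $\beta$ with $\psi^{*}(\beta)=\alpha$. The central observation is that by the construction of the projective resolution $(H_{i})\hookrightarrow(F_{i})\twoheadrightarrow(G_{i})$ in Section~2, both the $F_{i}$'s (by design) and the $H_{i}$'s (as subgroups of finitely generated free abelian groups) are free abelian, so $e'$ and $e''$ both have free $K$-theory. By Lemma~\ref{l:freekthy}, the natural homomorphisms $\Gamma_{e',\mathsf{S}e_{2}}$ and $\Gamma_{e'',\mathsf{S}e_{2}}$ are isomorphisms. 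I would then set $x:=\Gamma_{e'',\mathsf{S}e_{2}}^{-1}(\alpha)\in\kkE(e'',\mathsf{S}e_{2})$ and invoke the contravariant form of the six-term exact sequence from Remark~\ref{r:exact},
\[
\kkE(e',\mathsf{S}e_{2})\xrightarrow{\psi^{*}}\kkE(e'',\mathsf{S}e_{2})\xrightarrow{\phi^{*}}\kkE(\mathsf{S}e_{1},\mathsf{S}e_{2}).
\]
If I can show $\phi^{*}(x)=0$, then by exactness $x=\psi^{*}(y)$ for some $y\in\kkE(e',\mathsf{S}e_{2})$, and $\beta:=\Gamma_{e',\mathsf{S}e_{2}}(y)$ will satisfy $\psi^{*}(\beta)=\Gamma(\psi^{*}(y))=\Gamma(x)=\alpha$ by naturality of $\Gamma$.

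The main step will be proving $\phi^{*}(x)=0$ in $\kkE(\mathsf{S}e_{1},\mathsf{S}e_{2})$. Naturality of $\Gamma$ gives $\Gamma_{\mathsf{S}e_{1},\mathsf{S}e_{2}}(\phi^{*}(x))=\phi^{*}(\alpha)=0$, so $u:=\phi^{*}(x)$ already lies in $\ker\Gamma_{\mathsf{S}e_{1},\mathsf{S}e_{2}}$; the difficulty is to upgrade this to vanishing in $\kkE$. Since the degeneration of the cyclic six-term for $\mathsf{S}e_{1}\hookrightarrow e''\twoheadrightarrow e'$ into short exact sequences (Assumption~\ref{assume2}) forces $\ksix(\phi)=0$, Bonkat's UCT pins $u$ inside $\Extsix(\ksix(\mathsf{S}e_{1}),\ksix(\mathsf{S}^{2}e_{2}))$.

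The hard part will be annihilating this $\operatorname{Ext}$-class: I would leverage the explicit description from Lemma~\ref{l:kken1} of $\im\underline{K}_{\mathcal{E}}(\phi)_{\mathfrak{e}_{p_{i}^{n_{i,j}},1}}=\ker\underline{K}_{\mathcal{E}}(\psi)_{\mathfrak{e}_{p_{i}^{n_{i,j}},1}}$ together with the reduction Lemmas~\ref{l:red1} and~\ref{l:red2}, which collectively show that the vanishing of a $\Hom_{\Lambda}$-morphism can be tested on the finite family $\{\mathfrak{f}_{1,k}\}_{k=0}^{5}\cup\{\mathfrak{e}_{p_{i}^{n_{i,j}},1}\}_{i,j}$. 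The hypotheses $K_{1}(\mathfrak{B}_{0})=0$ and $K_{1}(\mathfrak{B}_{2})$ torsion-free on $e_{2}$ should be exactly what is needed to control the $\Extsix$-obstruction prime-by-prime, tracing through each prime-power generator of the projective resolution to force $u$ to vanish in $\kkE$.
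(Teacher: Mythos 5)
Your reduction is correct as far as it goes: since $\ksix(e')$ and $\ksix(e'')$ are free, Lemma~\ref{l:freekthy} makes $\Gamma_{e'',\mathsf{S}e_2}$ and $\Gamma_{e',\mathsf{S}e_2}$ isomorphisms, and exactness of the $\kkE(-,\mathsf{S}e_2)$ sequence then shows that the claim is \emph{equivalent} to the implication ``$\Gamma_{\mathsf{S}e_1,\mathsf{S}e_2}(\phi^*(x))=0\Rightarrow\phi^*(x)=0$''. But this reformulation is where your proof stops rather than starts, and the tools you name for the remaining step do not apply. First, the observation that $u=\phi^*(x)$ lies in $\Extsix(\ksix(\mathsf{S}e_1),\ksix(\mathsf{S}^2e_2))$ is automatic and uses nothing about $\alpha$: the degeneration of the induced cyclic sequence forces $\ksix(\phi)=0$, so \emph{every} element of $\im(\phi^*)$ has vanishing $\Homsix$-component. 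The entire content of the theorem is that the hypothesis $\alpha\circ\kE(\phi)=0$ --- which lives on the groups $H^{\mathsf{S}e_1}_{n,1}$, not on $\ksix(\mathsf{S}e_1)$ --- kills this $\Extsix$-class, and you give no mechanism for that. Second, Lemmas~\ref{l:red1} and~\ref{l:red2} are statements about when an element of $\Hom_\Lambda(\kE(\mathsf{S}e_1),\kE(\mathsf{S}e_2))$ vanishes; they say nothing about elements of $\ker\Gamma_{\mathsf{S}e_1,\mathsf{S}e_2}\subseteq\kkE(\mathsf{S}e_1,\mathsf{S}e_2)$, and in the paper they are ingredients of the surjectivity statement (Theorem~\ref{t:surj}), not of this one.

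If you unwind your own reduction via the $\Hom$--$\mathrm{ext}$ long exact sequence of the resolution $0\to\ksix(e'')\to\ksix(e')\to\ksix(\mathsf{S}^2e_1)\to 0$, the condition $\phi^*(x)=0$ becomes precisely the assertion that the family $(\alpha_{\F{1}{i}})$ extends along $(\lambda_i)$ to a morphism $\ksix(e')\to\ksix(\mathsf{S}e_2)$ of cyclic six term sequences --- and that is exactly what the paper constructs by hand. The construction is where the hypotheses earn their keep: Lemma~\ref{l:kken1} together with the exactness of Remark~\ref{r:exact} shows that $h^{1,1,in}_{p_i^{n_{i,j}},1,e''}(\tilde x)\in\im(\kE(\phi)_{\e{p_i^{n_{i,j}}}{1}})$ for each generator $\tilde x$, whence $\alpha\circ\kE(\phi)=0$ forces $\alpha_{\F{1}{2}}(\tilde x)$ to be divisible by $p_i^{n_{i,j}}$ in the appropriate sense (this defines $\beta_2$ and $\beta_1$); and the component $\beta_5$ requires the D\u{a}d\u{a}rlat--Loring UMCT \cite{multcoeff} applied to the quotient extension $\mathsf{S}\mathfrak{A}_2\hookrightarrow\mathfrak{A}_2''\twoheadrightarrow\mathfrak{A}_2'$, an ingredient absent from your outline. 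Without these two steps there is no argument, so the proposal has a genuine gap at its central claim.
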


\begin{proof}
A computation shows that the above sequence is a chain complex.  Let $\alpha \in \Hom_{ \Lambda } ( \underline{K}_{\mathcal{E}} ( e'' ), \underline{K}_{\mathcal{E}} ( \mathsf{S} e_{2} ) )$ such that $\alpha \circ  \kE ( \phi ) = 0$.  We want to construct $\beta \in \Hom_{\Lambda} ( \underline{K}_{\mathcal{E} } ( e' ), \underline{K}_{\mathcal{E}} (\mathsf{S}e_{2} ) )$ such that $ \beta \circ \kE ( \psi ) = \alpha$.

We will construct $\ftn{ ( \beta_{i} )_{ i = 0}^{5} }{ \ksix ( e') }{ \ksix( \mathsf{S} e_{2} ) }$ such that $\beta_{i} \circ \kE ( \psi )_{ \F{1}{i}  } = \alpha_{ \F{1}{ i } }$.  Then by Lemma \ref{l:freekthy}, there exists a unique $\beta \in \Hom_{\Lambda} ( \underline{K}_{\mathcal{E} } ( e' ), \underline{K}_{\mathcal{E}} (\mathsf{S}e_{2} ) )$ such that $ \beta  \circ \kE( \psi ) = \alpha$.  Recall that $\kE ( \psi )_{ \F{1}{i} } = \lambda_{i}$ in the projective resolution of $\ksix ( e_{1} )$.   

For convenience, we set $\alpha_{i} =   \alpha_{ \F{1}{ i } }$.  We must construct $\ftn{ \beta_{i} }{ F_{1,i}^{e'} }{ F_{1, i}^{\mathsf{S}e_{2} } }$ such that the diagrams
\begin{align*}
\xymatrix{
0 \ar[r]^{ \lambda_{3} } \ar[d]^{\cong} & 0 \ar[d]^{ \cong } 
&
0 \ar[r]^{ \lambda_{4} } \ar[d]^{\cong} & G_{4} \ar[d]^{ \cong } 
&
H_{5} \ar[r]^-{ \lambda_{5} } \ar[d]^{\cong} & G_{4} \oplus G_{5} \ar[d]^{ \cong } 
\\
F_{1,3}^{e''} \ar[r]^{ \lambda_{3} } \ar[d]_{\alpha_{3} } & F_{1,3}^{e'} \ar[ld]^{ \beta_{3} } 
&
F_{1,4}^{e''} \ar[r]^{ \lambda_{4} } \ar[d]_{\alpha_{4} } & F_{1,4}^{e'} \ar[ld]^{ \beta_{4} } 
&
F_{1,5}^{e''} \ar[r]^{ \lambda_{5} } \ar[d]_{\alpha_{5} } & F_{1,5}^{e'} \ar[ld]^{ \beta_{5} } 
\\
F_{1,3}^{\mathsf{S}e_{2} } 
&&
F_{1,4}^{\mathsf{S}e_{2} }
&&
F_{1,5}^{\mathsf{S}e_{2} }
 \\
H_{0} \ar[r]^-{ \lambda_{0} } \ar[d]^{\cong} & G_{5} \oplus \widetilde{F}_{0} \ar[d]^{ \cong } 
&
H_{1} \ar[r]^-{ \lambda_{1} } \ar[d]^{\cong} &  \widetilde{F}_{0}  \oplus F_{2} \ar[d]^{ \cong } 
&
H_{2} \ar[r]^-{ \lambda_{2} } \ar[d]^{\cong} & F_{2} \ar[d]^{ \cong } 
\\
F_{1,0}^{e''} \ar[r]^{ \lambda_{0} } \ar[d]_{\alpha_{0} } & F_{1,0}^{e'} \ar[ld]^{ \beta_{0} } 
&
F_{1,1}^{e''} \ar[r]^{ \lambda_{1} } \ar[d]_{\alpha_{1} } & F_{1,1}^{e'} \ar[ld]^{ \beta_{1} } 
&
F_{1,2}^{e''} \ar[r]^{ \lambda_{2} } \ar[d]_{\alpha_{2} } & F_{1,2}^{e'} \ar[ld]^{ \beta_{2} } 
\\
F_{1,0}^{\mathsf{S}e_{2} } 
&&
F_{1,1}^{\mathsf{S}e_{2} }
&&
F_{1,2}^{\mathsf{S}e_{2} }
}\end{align*}
are commutative.  It is clear from this that $\alpha_{3} = 0$, $\alpha_{4} = 0$.  Since $F_{1,0}^{\mathsf{S}e_{2}} \cong F_{1, 3}^{e_{2}} = 0$, $\alpha_{0} = 0$.  Also, we must necessarily have that $\beta_{3} = 0$ and $\beta_{0} = 0$.  So, it is only the third, fifth, and sixth diagram we need to check for commutativity.

\emph{Definition of $\beta_{2}$:}  Since $F_{1,2}^{e'}$ is free, we will define $\beta_{2}$ on its canonical generators.  Let $i \in \{ 1, \dots, \ell \}$ and $j \in \{ 1, \dots, k_{\ell} \}$.  Let $x$ be one of the canonical generators of $\Z^{ m_{i,j} } \subseteq F_{1,2}^{e'}$ and let $\tilde{x}$ be the corresponding canonical generator of $\Z^{m_{i,j} } \subseteq F_{1,2}^{e''}$.  

Set $k_{i,j} = p_{i}^{n_{i,j}}$.  We claim that $h_{ k_{i,j} , 1, e'' }^{1,1,in} ( \tilde{x} ) \in \im (\kE ( \phi )_{ \e{ k_{i,j}  }{ 1 } } )$.  First note that 
\begin{equation*}
\xymatrix{
H_{ k_{i,j},1}^{ \mathsf{S}e_{1} } \ar[rr]^-{  \kE ( \phi )_{ \e{ k_{i,j} }{  1 } } } & & H_{k_{i,j}, 1}^{e''} \ar[rr]^{ \kE ( \psi )_{ \e{ k_{i,j}  }{ 1 } } } & & H_{k_{i,j},1}^{e'}
}
\end{equation*}
is exact (\cf\ Remark~\ref{r:exact}).  Note that $\kE ( \psi )_{\e{ k_{i,j} }{ 1} } ( h_{ k_{i,j}, 1, e''}^{1,1,in} ( \tilde{x} ) ) = h_{ k_{i,j} , 1, e' }^{1,1,in} ( \lambda_{2}( \tilde{x} ) ) = h_{ k_{i,j} , 1, e' }^{1,1,in} ( k_{i,j} x )$.  By Lemma \ref{l:kken1}, $h_{ k_{i,j}, 1, e' }^{1,1,in} (k_{i,j} x ) = 0$.  Hence, $h_{ k_{i,j} , 1, e'' }^{1,1,in} ( \tilde{x} ) \in \im (\kE ( \phi )_{ \e{ k_{i,j} }{ 1 } } )$.  Since $\alpha \circ \kE(\phi) = 0$, we have that $\alpha_{ \e{ k_{i,j} } { 1 }  } ( h_{ k_{i,j}, 1, e'' }^{1,1,in} ( \tilde{x} ) ) = 0$.  Since 
\begin{equation*}
h_{ k_{i,j} , 1, \mathsf{S}e_{2} }^{1,1,in} ( \alpha_{ 2  } ( \tilde{x} ) ) = \alpha_{ \e{ k_{i,j} }{ 1 }  } ( h_{ k_{i,j}, 1, e'' }^{1,1,in} ( \tilde{x} ) ) = 0,
\end{equation*}  
by Theorem \ref{err:idealkthy}, there exists $z_{ \tilde{x} } \in F_{1,1}^{ \mathsf{S} e_{2}}$ such that $k_{i,j} f_{1, 1}^{ \mathsf{S}e_{2} } ( z_{ \tilde{x} } ) =  \alpha_{ 2  } ( \tilde{x} )$.  Set $\beta_{2} ( x ) = f_{1, 1}^{ \mathsf{S}e_{2} } ( z_{ \tilde{x} } )$.  We do this for all the generators of $\Z^{m_{i,j}}$ and for all $i,j$.  Moreover, we let $\beta_{2}$ be zero on $\Z^{s}$.  By construction, we have that $\beta_{2} \circ \lambda_{2} = \alpha_{2}$. 

\emph{Definition of $\beta_{1}$:}  Recall that $F_{1,1}^{e'} = F_{1} = \widetilde{F}_{0} \oplus F_{2}$.  We define $\ftn{ \beta_{1} }{ F_{1,1}^{e'} }{ F_{1,1}^{\mathsf{S}e_{2}} }$ by setting it to zero on $\widetilde{F}_{0}$ and on $\Z^{s} \subseteq F_{2}$.  We now define $\beta_{1}$ on the rest of the canonical generators.  Let $i \in \{ 1,\dots, \ell \}$ and $j \in \{ 1, \dots, k_{\ell} \}$ be given.  We want to define $\beta_{1}$ on the generators of $\Z^{m_{i,j}}$.  Let $x$ be one of the canonical generators of $\Z^{m_{i,j}}$.  Define $\beta_{1} ( x ) = z_{ \tilde{x} }$ where $z_{\tilde{x}}$ is defined as above.  By construction, $f_{1,1}^{\mathsf{S} e_{2} } \circ \beta_{1} = \beta_{2} \circ f_{1,1}^{ e'}$.  Since $F_{1,0}^{\mathsf{S}e_{2} } = 0$, we have that $f_{1,1}^{ \mathsf{S} e_{2}}$ is injective and since
\begin{align*}
f_{1,1}^{ \mathsf{S} e_{2}} \circ \beta_{1} \circ \lambda_{1} &= \beta_{2} \circ f_{1,1}^{ e' } \circ \lambda_{1} \\
				&= \beta_{2} \circ \lambda_{2} \circ f_{1,1}^{ e''} \\
				&= \alpha_{2} \circ f_{1,1}^{e''} \\
				&= f_{1,1}^{\mathsf{S}e_{2} } \circ \alpha_{1}
\end{align*} 
we have that $ \beta_{1} \circ \lambda_{1} = \alpha_{1}$.

\emph{Definition of $\beta_{0}$:} As mentioned above, we need to have $\beta_{0} = 0$.

\emph{Definition of $\beta_{5}$:}  Since $\mathsf{S} \mathfrak{A}_{2} \hookrightarrow\mathfrak{A}_2''\twoheadrightarrow \mathfrak{A}_{2}'$ is an extension of separable, nuclear $C^{*}$-algebras in the bootstrap category $\mathcal{N}$ with all the $K$-theory being finitely generated, the sequence
 $$\xymatrix{
    \Hom_{\Lambda}(\underline{K}(\mathfrak{A}_2'),\underline{K}(\mathsf{S}\mathfrak{B}_2))\ar[r]^{\psi^*}&
    \Hom_{\Lambda}(\underline{K}(\mathfrak{A}_2''),\underline{K}(\mathsf{S}\mathfrak{B}_2))\ar[r]^{\phi^*}&
    \Hom_{\Lambda}(\underline{K}(\mathsf{S}\mathfrak{A}_2),\underline{K}(\mathsf{S}\mathfrak{B}_2))\ar[d]\\
    \Hom_{\Lambda}(\underline{K}(\mathfrak{A}_2),\underline{K}(\mathsf{S}\mathfrak{B}_2))\ar[u]&
    \Hom_{\Lambda}(\underline{K}(\mathsf{S}\mathfrak{A}_2''),\underline{K}(\mathsf{S}\mathfrak{B}_2))\ar[l]^{\phi^*}&
    \Hom_{\Lambda}(\underline{K}(\mathsf{S}\mathfrak{A}_2'),\underline{K}(\mathsf{S}\mathfrak{B}_2))\ar[l]^{\psi^*}
    }$$
  is exact (using the UMCT of D\u{a}d\u{a}rlat and Loring, \cf\ \cite{multcoeff}).  Since $\alpha \circ \underline{K} ( \phi ) = 0$, there exists $\eta \in \Hom_{ \Lambda } ( \underline{K} ( \mathfrak{A}_{2}' ), \underline{K} ( \mathsf{S} \mathfrak{B}_{2} ) )$ such that $ \eta \circ \underline{K} ( \psi )  = \alpha$ on $\underline{K} ( \mathfrak{A}_{2} '' )$.  Set $\ftn{ \beta_{5} = \eta \vert_{ F_{1,5 }^{e'} } }{ F_{1,5}^{ e'} }{ F_{1,5}^{ \mathsf{S} e_{2} } }$.  Consequently, $\beta_{5} \circ \lambda_{5} = \alpha_{5}$.
  
\emph{Definition $\beta_{4}$:}  Since $F_{1,4}^{e'} = F_{1,4}^{e_{1}}$ is free and $f_{1,4}^{\mathsf{S}e_{2} }$ is surjective, there exists a homomorphism $\ftn{ \beta_{4} }{ F_{1,4}^{e'} }{ F_{1,4}^{\mathsf{S}e_{2}} }$ such that $f_{1,4}^{ \mathsf{S}e_{2} } \circ \beta_{4} = \beta_{5} \circ f_{1,4}^{ e'}$.  

\emph{Definition $\beta_{3}$:}  As mentioned above, we need to have $\beta_{3} = 0$. 

By construction, $\beta_{i} \circ \lambda_{i} = \alpha_{i}$ for $i = 0, 1, \dots, 5$ and $( \beta_{i} )_{ i = 0}^{5}$ is a homomorphism from $\ksix ( e' )$ to $\ksix ( \mathsf{S} e_{2} )$.
\end{proof}

\begin{theor}\label{t:surj}
Let $e_{2} \colon \mathfrak{B}_{0} \hookrightarrow \mathfrak{B}_{1} \twoheadrightarrow \mathfrak{B}_{2}$ be an extension of separable $C^{*}$-algebras with $K_{1} ( \mathfrak{B}_{0} ) = 0$ and $K_{1} ( \mathfrak{B}_{2} )$ torsion free.  Then the sequence
\begin{equation*}
\Hom_{ \Lambda} ( \underline{K}_{ \mathcal{E} } ( e '' ) , \underline{K}_{ \mathcal{E} } ( \mathsf{S} e_{2} ) ) \overset{ \phi^{*} }{ \rightarrow } \Hom_{ \Lambda } ( \underline{K}_{\mathcal{E} } ( \mathsf{S}e_{1} ) , \underline{K}_{\mathcal{E} } ( \mathsf{S} e_{2} ) ) \overset{ \delta }{ \rightarrow } \Hom_{ \Lambda } ( \underline{K}_{\mathcal{E} } ( \mathsf{S}e' ) , \underline{K}_{ \mathcal{E} } ( \mathsf{S}e_{2} ) ) 
\end{equation*}
induced by the exact sequence $\mathsf{S} e_{1} \overset{ \phi }{ \hookrightarrow } e'' \overset{\psi}{ \twoheadrightarrow} e'$ is exact. 
\end{theor}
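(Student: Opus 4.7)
The proof plan follows the architecture of Theorem \ref{t:inj}: verify the chain complex condition, then construct the lift $\gamma$ using freeness of $\ksix(e'')$ and the explicit structure of the projective resolution from Section~2. The complex condition $\delta \circ \phi^{*} = 0$ is immediate, since $\delta$ is precomposition with the connecting morphism $\mathsf{S}\partial$, and $\kE(\phi)$ together with $\mathsf{S}\partial$ are consecutive maps in the cyclic six-term exact sequence for $\mathsf{S}e_{1} \hookrightarrow e'' \twoheadrightarrow e'$, so their composition vanishes.

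For exactness, let $\alpha \in \Hom_{\Lambda}(\kE(\mathsf{S}e_{1}), \kE(\mathsf{S}e_{2}))$ satisfy $\delta(\alpha) = 0$; I would construct $\gamma \in \Hom_{\Lambda}(\kE(e''), \kE(\mathsf{S}e_{2}))$ with $\gamma \circ \kE(\phi) = \alpha$. The crucial structural observation is that $\ksix(e'')$ is finitely generated and free in every degree: each $F^{e''}_{1,i}$ equals the group $H_{i}$ from the projective resolution of Assumption~\ref{assume2}, and by construction $H_{i}$ is a subgroup of the finitely generated free abelian group $F_{i}$. Lemma~\ref{l:freekthy} therefore identifies $\Hom_{\Lambda}(\kE(e''), \kE(\mathsf{S}e_{2}))$ with $\Hom_{\mathrm{six}}(\ksix(e''), \ksix(\mathsf{S}e_{2}))$, reducing the task to constructing a six-term morphism $\tilde{\gamma} = (\tilde{\gamma}_{i})_{i=0}^{5}$ whose unique extension to $\Hom_{\Lambda}$ is the desired $\gamma$. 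Since the six-term sequence attached to $\mathsf{S}e_{1} \hookrightarrow e'' \twoheadrightarrow e'$ degenerates by Proposition~\ref{p:projres} (so in particular $\ksix(\phi) = 0$ and the connecting map is surjective on $\ksix$), the condition $\delta(\alpha) = 0$ immediately forces $\alpha|_{\ksix(\mathsf{S}e_{1})} = 0$, and compatibility of $\tilde{\gamma}$ on the $\ksix$-component is then automatic.

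The real content is arranging $\tilde{\gamma}$ so that the induced $\gamma$ matches $\alpha$ on the $H_{n,i}$-components. By Lemmata~\ref{l:red1} and \ref{l:red2} one reduces to checking the pieces $H_{p_{i}^{n_{i,j}},1}$ indexed by the distinguished prime powers appearing in the torsion decomposition of $G_{2}$, and Lemma~\ref{l:kken1} provides an explicit description of these groups together with the map induced by $\phi$. The plan is, for each canonical generator $x$ of a free summand of $\ksix(e'')$ corresponding to a $\Z^{m_{i,j}}$-factor of $F_{1,2}^{e''} = H_{2}$, to use $\delta(\alpha) = 0$ to extract a cocycle datum prescribing where $\tilde{\gamma}_{i}$ must send $x$, in direct analogy with the construction of $\beta_{2}$ in Theorem~\ref{t:inj}. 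The remaining degrees are then forced either to zero or by a surjectivity/commutativity argument as in the proof of Theorem~\ref{t:inj} (using that $F_{1,0}^{\mathsf{S}e_{2}} = K_{1}(\mathfrak{B}_{0}) = 0$ and $F_{1,2}^{\mathsf{S}e_{2}} = K_{1}(\mathfrak{B}_{2})$ is torsion-free).

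The main obstacle will be verifying that these generator-by-generator choices cohere into an honest six-term morphism and that the induced $\gamma$ actually equals $\alpha$ on every $H_{p_{i}^{n_{i,j}},1}^{\mathsf{S}e_{1}}$. Both points should fall out of the explicit description in Lemma~\ref{l:kken1} together with the fact that $\delta(\alpha)$ vanishes on every $H_{p_{i}^{n_{i,j}},1}^{\mathsf{S}e'}$ (which is the precise translation of the cocycle compatibility at each prime-power level); this is the same role played by the hypothesis $\alpha \circ \kE(\phi) = 0$ in the proof of Theorem~\ref{t:inj}.
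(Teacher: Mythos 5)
Your plan is correct and follows essentially the same route as the paper's proof: reduce via Lemma~\ref{l:freekthy} to constructing a chain map on $\ksix(e'')$, use Lemmata~\ref{l:red1} and~\ref{l:red2} to restrict attention to the groups $H_{p_i^{n_{i,j}},1}^{\mathsf{S}e_1}$, build the map generator-by-generator on $F_{1,2}^{e''}$ from the exactness of Remark~\ref{r:exact} and the kernel description of Lemma~\ref{l:kken1}, and use the vanishing of $\delta(\alpha)$ on $\im\bigl(H_{n,1}^{\mathsf{S}e'}\to H_{n,1}^{\mathsf{S}e_1}\bigr)$ to close the argument. Be aware that the step you defer as "should fall out" is where the paper does most of its work: the agreement of $\beta\circ\kE(\phi)$ with $\alpha$ on each $H_{p_i^{n_{i,j}},1}^{\mathsf{S}e_1}$ is proved by an induction on $j$ that crucially uses the coefficient-change maps $\omega$ and $\chi$ of Lemma~\ref{l:bmaps} and their naturality, which your outline does not mention.
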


\begin{proof}
It is easy to check that the above sequence is a chain complex.  

Let $\alpha \in \Hom_{ \Lambda } ( \underline{K}_{\mathcal{E} } ( \mathsf{S}e_{1} ) , \underline{K}_{\mathcal{E} } ( \mathsf{S} e_{2} ) )$ such that $\delta ( \alpha ) = 0$.  By Lemma \ref{l:freekthy}, if $( \beta_{i} )_{ i = 0 }^{5}$ is a homomorphism from $\ksix ( e'' )$ to $\ksix ( \mathsf{S} e_{2} )$, then there exists a unique $\beta \in \Hom_{ \Lambda } ( \underline{K}_{\mathcal{E} } (e'' ) , \underline{K}_{\mathcal{E} } ( \mathsf{S} e_{2} ) )$ such that $\beta_{ \F{1}{i} }  = \beta_{i}$.  By Lemma \ref{l:red1} and Lemma \ref{l:red2}, it is enough to construct $( \beta_{i} )_{ i = 0}^{5}$ such that its unique lifting $\beta \in \Hom_{ \Lambda } ( \underline{K}_{\mathcal{E} } (e'' ) , \underline{K}_{\mathcal{E} } ( \mathsf{S} e_{2} ) )$ satisfies the property that $\alpha - \beta \circ \kE ( \phi )$ is zero on $H_{n,1}^{ \mathsf{S}e_{1} }$ and $F_{1, k }^{\mathsf{S}e_{1}}$ for all $n = p_{i}^{n_{i,j}}$ and $k=0,1, \dots, 5$.

Since the homomorphisms from $F_{1, k}^{e'}$ to $F_{1,k}^{\mathsf{S}^{2} e_{1 }}$ are surjective and $\delta ( \alpha )=0$, we have that $\alpha$ is zero on $F_{1,k}^{\mathsf{S} e_{1} }$.  Note also that any choice of chain map $( \beta_{i} )_{ i = 0 }^{5}$, its unique lifting $\beta$ satisfies the property that $ \beta \circ \kE ( \phi ) = 0$ on $F_{1, k }^{ \mathsf{S}e_{1} }$.  Consequently, we need a chain map $( \beta_{i} )_{ i = 0}^{5}$ from $\ksix ( e'' )$ to $\ksix ( \mathsf{S} e_{2} )$ such that its unique lifting $\beta$ satisfies the property that $ ( \beta \circ \kE ( \phi ) )_{ \e{n}{1} } =\alpha_{ \e{ n }{ 1 } } $ for $n = p_{i}^{n_{i,j}}$ for all $i = 1, \dots , \ell$ and $j =1, \dots, k_{\ell}$.  

First we want to define a homomorphism from $F_{1,2}^{e''}$ to $F_{1, 1 }^{ \mathsf{S}e_{2} }$.  It will be defined on each of the direct summands as follows.  Let $i \in\{ 1, \ldots, \ell \}$ and $j \in \{ 1, \dots, k_{i} \}$ and set $n = p_{i}^{n_{i,j}}$.  Recall that $F_{1,2}^{e''}$ is
\begin{align*}
  & \Z^{m_{1,1}}\oplus\cdots\oplus\Z^{m_{1,k_1}}  \\
  & \oplus\Z^{m_{2,1}}\oplus\cdots\oplus\Z^{m_{2,k_2}} \\
  & \oplus\cdots\oplus\Z^{m_{\ell,1}}\oplus\cdots\oplus\Z^{m_{\ell,k_{\ell}}}.
\end{align*}
Let $e_{i,j,t}$ be one of the canonical generator of $\Z^{m_{i,j}}$.  Note that 
\begin{equation*}
\kE ( \psi )_{ \e{ n }{ 1 } } ( h_{n,1, e''}^{1,1, in} (e_{i,j,t}) ) = h_{n,1, e'}^{1,1,in} ( \lambda_{2} (e_{i,j,t}) ) = h_{n,1, e'}^{1,1,in} ( n e_{i,j,t} ) 
\end{equation*}  
By Lemma \ref{l:kken1}, $h_{n,1, e'}^{1,1,in} ( n e_{i,j,t} ) = 0$.  Since 
\begin{equation*}
\xymatrix{ H_{n,1}^{\mathsf{S}e_{1} }  \ar[rr]^{ \kE ( \phi )_{ \e{ n }{ 1 } } }& & H_{n,1}^{e''} \ar[rr]^{ \kE ( \psi )_{ \e{ n}{ 1 } } } & & H_{n,1}^{e'}  }
\end{equation*}  
is exact (\cf\ Remark~\ref{r:exact}), there exists $x_{i,j, t } \in H_{n,1}^{\mathsf{S}e_{1}}$ such that $\kE ( \phi )_{ \e{ n } {1} } ( x_{i,j,t} ) = h_{n,1, e''}^{1,1, in} (e_{i,j,t})$.

Since $\alpha$ is zero on $F_{1,4}^{\mathsf{S}e_{1}}$ and $h_{n,1, \mathsf{S}e_{1} }^{1,1, out } ( x_{i,j,t } ) \in F_{1,4}^{\mathsf{S}e_{1}}$, 
\begin{align*}
h_{n, 1, \mathsf{S}e_{2} }^{1,1, out } ( \alpha_{ \e{ n }{ 1 } } ( x_{i, j , t } ) ) &=  \alpha_{\F{1}{4} } ( h_{n, 1, \mathsf{S}e_{1} }^{1,1, out } (x_{i,j,t} ) ) \\
			&= 0.
\end{align*}
By Theorem \ref{err:idealkthy}, there exists $y_{i,j,t} \in F_{1,2}^{ \mathsf{S}e_{2}}$ such that $h_{n,1, \mathsf{S}e_{2}}^{1,1,in} ( y_{i,j,t} ) =  \alpha_{ \e{ n }{ 1 }  } ( x_{i, j , t } )$.  
Note that 
\begin{align*}
h_{n,1, \mathsf{S}e_{2} }^{n,1, out } ( h_{n,1, \mathsf{S}e_{2}}^{1,1,in} ( y_{i,j,t} ) ) &= h_{n,1, \mathsf{S}e_{2} }^{n,1, out } ( \alpha_{ \e{ n }{ 1 } } ( x_{i, j , t } ) ) \\
			&= \alpha_{\F{1}{3} } ( h_{n,1, \mathsf{S}e_{1} }^{n,1, out } ( x_{i,j,t} ) ) \\
			&= 0.
\end{align*} 
By Diagram (\ref{d1}) in Theorem \ref{err:idealkthy},
$h_{n,1, \mathsf{S}e_{2} }^{n,1, out } ( h_{n,1, \mathsf{S}e_{2}}^{1,1,in} ( y_{i,j,t} ) ) = \tilde{f}_{ 1, 2}^{ \mathsf{S}e_{2}} ( y_{i,j,t} ) = 0$.  By Theorem \ref{err:idealkthy}, there exists $z_{i,j,t} \in F_{1,1}^{\mathsf{S}e_{2}}$ such that $\tilde{f}_{1,1}^{ \mathsf{S}e_{2} } ( z_{i,j,t} ) = y_{i,j,t}$.  Note that $\tilde{f}_{1,1}^{ \mathsf{S} e_{2} } = f_{1,1}^{ \mathsf{S}e_{2}}$.  Define $\ftn{ \zeta_{i,j} }{ \Z^{m_{i,j}} }{ F_{1,1}^{ \mathsf{S}e_{2} } }$ by $\zeta_{i,j} ( e_{i,j,t} ) = z_{i,j,t}$.  Define $\ftn{ \zeta }{ F_{1,2}^{ e'' } }{ F_{1,1}^{ \mathsf{S}e_{2} } }$ by 
\begin{equation*}
\zeta = \sum_{ i = 1}^{\ell} \sum_{ j = 1}^{k_{i} } \zeta_{i,j}.
\end{equation*}  

\emph{Definition of the chain map $( \beta_{i} )_{ i = 0}^{5}$:}  Define $\ftn{ \beta_{1} }{ F_{1,1}^{e''} }{ F_{1,1}^{ \mathsf{S}e_{2} } }$ by $\zeta \circ f_{1,1}^{e''}$.  Define $\ftn{ \beta_{2} }{ F_{1,2}^{e''} }{ F_{1,2}^{ \mathsf{S}e_{2}} }$ by $f_{1,1}^{ \mathsf{S}e_{2} } \circ \zeta$.  Set $\beta_{i} = 0$ for $i = 0, 3, 4, 5$.  A computation shows that $( \beta_{i} )_{ i = 0 }^{5}$ is a homomorphism from $\ksix ( e'' )$ to $\ksix ( \mathsf{S} e_{2} )$. 

Let $\beta \in \Hom_{ \Lambda } ( \underline{K}_{\mathcal{E} } (e'' ) , \underline{K}_{\mathcal{E} } ( \mathsf{S} e_{2} ) )$ be the unique homomorphism such that $\beta_{\F{1}{i} } = \beta_{i}$.  We need to check that $\alpha - \beta \circ \kE ( \phi ) = 0$ on $H_{n, 1}^{ \mathsf{S}e_{1} }$ for all $n = p_{i}^{n_{i,j}}$.  

Since $\delta ( \alpha ) = 0$ and $\delta ( \beta \circ \kE ( \phi )  ) = 0$, we have that $\alpha_{ \e{ n }{ 1 } } ( y ) =  ( \beta \circ \kE ( \phi ) )_{ \e{ n }{ 1} } (y) = 0$ for all $y \in \im ( H_{n,1}^{\mathsf{S}e'}  \rightarrow H_{n,1}^{ \mathsf{S} e_{1} } )$. 
 
Note that
\begin{align*}
(\beta\circ\underline{K}_{\mathcal{E}}(\phi))_{\mathfrak{e}_{p_i^{n_{i,j}},1}} ( x_{i,j,t} ) &= \beta ( h_{ p_{i}^{n_{i,j}}, 1, e'' }^{1,1, in } ( e_{i,j,t} ) ) \\
	&= h_{ p_{i}^{n_{i,j}}, 1, \mathsf{S} e_{2} }^{1,1, in } ( \beta ( e_{i,j,t} ) ) \\
	&= h_{ p_{i}^{n_{i,j}}, 1, \mathsf{S} e_{2} }^{1,1, in }  ( f_{1,1}^{ \mathsf{S} e_{2} } ( z_{i,j,t}) ) \\
	&= h_{ p_{i}^{n_{i,j}}, 1, \mathsf{S} e_{2} }^{1,1, in }  ( y_{i,j,t} ) \\
	&= \alpha_{\mathfrak{e}_{p_i^{n_{i,j}},1}} ( x_{i,j,t} ).
\end{align*}
 For each $i=1,\ldots,\ell$ and each $j=1,\ldots,k_i$, set
$$X_{i,j}= \operatorname*{span}_{1\leq t\leq m_{i,j}} x_{i,j,t}.$$
By the above equation, we have that
$$(\beta\circ\underline{K}_{\mathcal{E}}(\phi))_{\mathfrak{e}_{p_i^{n_{i,j}},1}}(x)=\alpha_{\mathfrak{e}_{p_i^{n_{i,j}},1}}(x),\quad\text{for all }x\in X_{i,j}.$$

For convenience, set $d_{i,j} = p_{i}^{n_{i,j}}$.  We will first show that $ ( \beta  \circ \kE ( \phi ) ) = \alpha$ on $H_{ d_{i,1} , 1}^{ \mathsf{S}e_{1}}$.  Let $x \in H_{d_{i,1} , 1}^{\mathsf{S}e_{1}}$.  Since $\kE ( \phi )_{ \e{ d_{i,1} }{  1 } } ( x ) \in \ker( \ftn{ \kE ( \psi )_{ \e{ d_{i,1} }{ 1 } } }{ H_{d_{i,1},1}^{e''} }{ H_{d_{i,1},1}^{e'} } )$, by Lemma \ref{l:kken1} there exist $z_{1}, z_{2} , \dots, z_{k_{i}}\in F_{1,2}^{e''}$ with $z_{j}$ in 
\begin{equation*}
0 \oplus \cdots \oplus 0 \oplus \Z^{m_{i,j}} \oplus 0 \oplus \cdots \oplus 0 \subseteq F_{1,2}^{e''}
\end{equation*}  
such that 
\begin{equation*}
\kE ( \phi )_{ \e{ d_{i,1} }{  1 } }(x) = \sum_{ j = 1}^{ k_{i} } h_{ d_{i,1} , 1, e'' }^{1,1, in } (z_{j}).
\end{equation*}

By Lemma \ref{l:bmaps}, $h_{d_{i,1}, 1, e'' }^{1,1, in } ( z_{j} ) = \omega^{e''}_{d_{i,1} , d_{i,j} , 1} ( h_{d_{i,j}, 1, e''}^{1,1, in } (z_{j}) )$. By Lemma \ref{l:kken1}, $h_{d_{i,j}, 1, e''}^{1,1, in } (z_{j})$ is in the subgroup of $H_{d_{i,j},1}^{e''}$ generated by $\setof{ h_{d_{i,j} , 1, e''}^{1,1,in} ( e_{i,j,t} ) }{ 1 \leq t \leq m_{i,j} }$.  Since $\underline{K}_{\mathcal{E}} (\phi )_{ \mathfrak{e}_{ d_{i,j} ,1 } }( x_{i,j,t} ) = h_{ d_{i,j} , 1, e''}^{1,1,in} ( e_{i,j,t} )$, there exists $x_{j} \in X_{i,j}\subseteq H_{d_{i,j} , 1}^{\mathsf{S}e_{1}}$ such that $\kE ( \phi )_{\e{ d_{i,j} }{ 1 } } ( x_{j} ) = h_{d_{i,j}, 1, e''}^{1,1, in } (z_{j} )$.

Set $y_{j} =  \omega_{ d_{i,1} , d_{i,j} , 1}^{ \mathsf{S}e_{1} } ( x_{j} )$.  Then 
\begin{align*}
\kE ( \phi )_{ \e{ d_{i,1} }{ 1 }  } ( y_{j} ) = \omega_{d_{i,1} , d_{i,j} , 1}^{ e'' } ( \kE ( \phi )_{ \e{ d_{i,j} }{ 1 } } ( x_{j} ) )= h_{d_{i,1}, 1, e'' }^{1,1, in } ( z_{j} ).
\end{align*}
Hence, $\kE ( \phi )_{ \e{ d_{i,1} }{ 1} } ( x ) = \kE ( \phi )_{ \e{ d_{i,1} }{ 1 } } \left(  \sum_{j=1}^{k_{i}}  y_{j} \right)$.  Hence, $x - \sum_{j=1}^{k_{i}}  y_{j}  \in \ker( \kE ( \phi )_{ \e{ d_{i,1} }{ 1 } } ) = \im ( H_{d_{i,1},1}^{\mathsf{S}e'}  \rightarrow H_{d_{i,1} , 1}^{ \mathsf{S} e_{1} } )$.  Thus, $\alpha_{ \e{ d_{i,1} } {1 } } (x ) = \alpha_{\e{ d_{i,1} }{ 1 } } \left( \sum_{j=1}^{k_{i}}  y_{j} \right)$ and $( \beta  \circ \kE ( \phi ) )_{ \e{ d_{i,1} }{ 1 } } (x) = ( \beta \circ \kE ( \phi ) )_{ \e{ d_{i,1} }{ 1 } } \left( \sum_{j=1}^{k_{i}}  y_{j} \right)$.  Note that 
\begin{align*}
( \beta \circ \kE ( \phi ) )_{ \e{ d_{i,1} }{ 1 }  }  ( y_{j} ) &= \beta_{ \e{ d_{i,1} }{ 1}  }( \underline{K}_{\mathcal{E}}(\phi)_{\e{ d_{i,1} }{ 1} } ( y_{j} ) ) \\
				&= \beta_{\e{ d_{i,1} }{ 1 } } ( \kE ( \phi )_{ \e{ d_{i,1} }{ 1} }  ( \omega_{ d_{i,1} , d_{i,j} , 1}^{ \mathsf{S}e_{1} } ( x_{j} ) ) ) \\
				&= \beta_{ \e{ d_{i,1} }{ 1 } } ( \omega_{d_{i,1}, d_{i,j} , 1}^{ e'' } ( \kE ( \phi )_{ \e{ d_{i,j} }{ 1}  } (x_{j}) ) ) \\
				&= \omega_{d_{i,1}, d_{i,j} , 1}^{ \mathsf{S}e_2 } ( \beta_{ \e{ d_{i,j} }{ 1} } ( \kE ( \phi )_{ \e{ d_{i,j} }{ 1 } } ( x_{j} ) ) ) \\
				&= \omega_{d_{i,1}, d_{i,j} , 1}^{ \mathsf{S}e_2 } ( ( \beta \circ \kE ( \phi) )_{\e{ d_{i,j} }{ 1 } } ( x_{j} ) ) \\
                                &= \omega_{d_{i,1}, d_{i,j} , 1}^{ \mathsf{S}e_2 } ( \alpha_{\mathfrak{e}_{d_{i,j},1}}(x_j) )\qquad\text{since }x_j\in X_{i,j}  \\
                                &= \alpha_{ \e{ d_{i,1} }{ 1 } } ( \omega_{d_{i,1}, d_{i,j} , 1}^{ \mathsf{S}e_{1} } (x_{j} ) ) \\
				&= \alpha_{\e{ d_{i,1} }{ 1} } ( y_{j} ).
\end{align*}
Therefore, $ ( \beta  \circ \kE( \phi ) )_{ \e{d_{i,1}}{1} }( x ) = \alpha_{ \e{ d_{i,1} }{ 1 } } ( x )$.  We have just shown that $ ( \beta \circ \kE ( \phi ) )_{ \e{ d_{i,1} }{ 1 } } = \alpha_{ \e{ d_{i,1} }{ 1 } }$.

Let $x \in H_{ d_{i,2} , 1}^{\mathsf{S}e_{1} }$.  Since $\kE ( \phi )_{ \e{ d_{i,2} }{ 1 } } ( x ) \in \ker( \ftn{ \kE ( \psi )_{ \e{ d_{i,2 } }{ 1 } } }{ H_{d_{i,2},1}^{e''} }{ H_{d_{i,2},1}^{e''} } )$, by Lemma \ref{l:kken1}, there exist $z_{1}, z_{2} , \dots, z_{k_{i}}\in F_{1,2}^{e''}$ with $z_{j}$ in 
\begin{equation*}
0 \oplus \cdots \oplus 0 \oplus \Z^{m_{i,j}} \oplus 0 \oplus \cdots \oplus 0 \subseteq F_{1,2}^{e''}
\end{equation*}  
such that 
\begin{equation*}
\kE ( \phi )_{ \e{ d_{i,2} }{ 1 } }(x) = \frac{ d_{i,2} }{ d_{i,1} }  h_{ d_{i,2} , 1, e'' }^{1,1, in } ( z_{1}) + \sum_{ j = 2}^{ k_{i} } h_{ d_{i,2} , 1, e'' }^{1,1, in } (z_{j}).
\end{equation*}
Arguing as in the case $d_{i,1}$, for $j \geq 2$, there exists $x_{j}$ in $H_{d_{i,j},1}^{ \mathsf{S} e_{1} }$ satisfying the following:
\begin{itemize}
\item[(1)] $ ( \beta  \circ \kE ( \phi ) ) _{ \e{ d_{i,2} }{ 1 } }( \omega_{d_{i,2}, d_{i,j} , 1}^{ \mathsf{S}e_{1} } ( x_{j} ) ) = \alpha_{ \e{ d_{i,2} }{ 1 }  } ( \omega_{d_{i,2}, d_{i,j} , 1}^{ \mathsf{S}e_{1} } ( x_{j} ) )$

\item[(2)] $\kE ( \phi )_{ \e{ d_{i,2} }{ 1 } } ( \omega_{ d_{i,2} , d_{i,j} ,1 }^{ \mathsf{S}e_{1} } ( x_{j} ) ) = \omega_{d_{i,2} , d_{i, j } , 1}^{e''} ( \kE ( \phi )_{ \e{ d_{i,j} }{ 1 } } ( x_{j} ) ) = h_{ d_{i,2} , 1, e'' }^{1,1, in } (z_{j})$.
\end{itemize}

By Lemma \ref{l:bmaps}, $ \chi_{ d_{i,2}, d_{i,1}, 1 }^{e''} \circ h_{d_{i,1} , 1, e''}^{1,1, in } = \frac{ d_{i,2} }{ d_{i,1} } h_{d_{i,2} , 1, e'' }^{1,1,in}$. By Lemma \ref{l:kken1}, $h_{d_{i,1}, 1, e''}^{1,1, in } (z_{1})$ is in the subgroup of $H_{d_{i,1},1}^{e''}$ generated by $\setof{ h_{d_{i,1} , 1, e''}^{1,1,in} ( e_{i,1,t} ) }{ 1 \leq t \leq m_{i,1} }$.  Since $\underline{K}_{\mathcal{E}} (\phi )_{ \mathfrak{e}_{ d_{i,1} ,1 } }( x_{i,1,t} ) = h_{ d_{i,1} , 1, e''}^{1,1,in} ( e_{i,1,t} )$, there exists $x_{1} \in X_{i,1}\subseteq H_{d_{i,1} , 1}^{\mathsf{S}e_{1}}$ such that $\kE ( \phi )_{\e{ d_{i,1} }{ 1 } } ( x_{1} ) = h_{d_{i,1}, 1, e''}^{1,1, in } (z_{1} )$. Note that 
\begin{align*}
\kE ( \phi )_{ \e{ d_{i,2} }{ 1 } } ( \chi_{d_{i,2} , d_{i,1}, 1}^{ \mathsf{S}e_{1} } ( x_{1} ) ) &= \chi_{ d_{i,2} , d_{i,1} , 1}^{e''} ( \kE( \phi )_{ \e{ d_{i,1} }{ 1} } ( x_{1} ) ) \\
				&= \chi_{ d_{i,2} , d_{i,1} , 1}^{e''} ( h_{d_{i,1} , 1, e''}^{1,1, in } (z_{1}) ) \\
				&= \frac{ d_{i,2} }{ d_{i,1} } h_{d_{i,2} , 1, e'' }^{1,1,in} ( z_{1} ) .
\end{align*}
Moreover, 
\begin{align*}
  ( \beta \circ \kE ( \phi)  )_{ \e{ d_{i,2}}{ 1 } }( \chi_{d_{i,2} , d_{i,1}, 1}^{ \mathsf{S}e_{1} } ( x_{1}) ) 
  & = \beta_{\mathfrak{e}_{d_{i,2},1}} ( \underline{K}_{\mathcal{E}}(\phi)_{\mathfrak{e}_{d_{i,2},1}}  (\chi_{d_{i,2} , d_{i,1}, 1}^{ \mathsf{S}e_1 }( x_{1} )))  \\
  & = \beta_{\mathfrak{e}_{d_{i,2},1}} ( \chi_{d_{i,2} , d_{i,1}, 1}^{ e'' } (\underline{K}_{\mathcal{E}}(\phi)_{\mathfrak{e}_{d_{i,1},1}}( x_{1} )))  \\
  & = \chi_{d_{i,2} , d_{i,1}, 1}^{ \mathsf{S}e_{2} } (\beta_{\mathfrak{e}_{d_{i,1},1}}(\underline{K}_{\mathcal{E}}(\phi)_{\mathfrak{e}_{d_{i,1},1}}( x_{1} )))  \\
  & = \chi_{d_{i,2} , d_{i,1}, 1}^{ \mathsf{S}e_{2} } ((\beta\circ\underline{K}_{\mathcal{E}}(\phi))_{\mathfrak{e}_{d_{i,1},1}}( x_{1} ))  \\
  & = \chi_{d_{i,2} , d_{i,1}, 1}^{ \mathsf{S}e_{2} } (\alpha_{ \e{ d_{i,1} }{ 1 } }( x_{1} )) \qquad\text{since }x_1\in X_{i,1} \\
  & = \alpha_{ \e{ d_{i,2} }{ 1 } } ( \chi_{d_{i,2} , d_{i,1}, 1}^{ \mathsf{S}e_{1} } ( x_{1}) ).
\end{align*}
Set $y_{1} = \chi_{d_{i,2} , d_{i,1}, 1}^{ \mathsf{S}e_{1} } ( x_{1})$ and $y_{j} = \omega_{ d_{i,2} , d_{i,j} , 1}^{ \mathsf{S}e_{1}} ( x_{j} )$ for all $j \geq 2$.  Then we have that $( \beta \circ \kE ( \phi)  )_{ \e{ d_{i,2}}{ 1 } }( y_{j} ) = \alpha_{ \e{ d_{i,2} }{ 1 } } ( y_{j} )$ for all $j = 1, 2, \dots, k_{i}$.  Also note that 
\begin{align*}
 \sum_{ j = 1}^{ k_{i} } \kE( \phi )_{ \e{ d_{i,2}}{ 1} } ( y_{j} ) = \kE ( \phi )_{ \e{ d_{i,2} }{ 1} } ( x ).
\end{align*}
Therefore, $x - \sum_{ j = 1}^{ k_{i} } y_{j}  \in \ker( \kE ( \phi )_{ \e{ d_{i,2} }{ 1 } } ) = \im ( H_{d_{i,2},1}^{\mathsf{S}e'}  \rightarrow H_{d_{i,2} , 1}^{ \mathsf{S} e_{1} } )$.  Hence, $\alpha_{ \e{ d_{i,2} }{ 1} } ( x ) = \sum_{ j = 1}^{ k_{i} }  \alpha_{ \e{ d_{i,2} }{ 1 } } ( y_{j} ) = \sum_{ j  =1}^{ k_{i} } ( \beta \circ \kE ( \phi ) )_{ \e{ d_{i,2} }{ 1 } }( y_{j} ) = ( \beta \circ \kE ( \phi ) )_{ \e{ d_{i,2} }{ 1 } }( x )$.  We have just shown that $\alpha_{ \e{ d_{i,2} }{  1 } }  = ( \beta \circ \kE ( \phi ) )_{ \e{ d_{i,2} }{  1 } }  $.  

We can continue this process to show that $\alpha_{ \e{d_{i,j} }{ 1} } =  ( \beta \circ \kE (\phi ) )_{ \e{d_{i,j} }{ 1} }$ for all $j  =1, 2, \dots, k_{i}$.  Hence, $\alpha_{ \e{d_{i,j} }{ 1} } =  ( \beta \circ \kE (\phi ) )_{ \e{d_{i,j} }{ 1} }$ for all $i = 1, 2, \dots, \ell$ and $j = 1, 2, \dots, k_{i}$.
\end{proof}

\section{Isomorphism theorem}
In this section we prove a UMCT for a certain class of extensions, \ie, we prove that the natural homomorphism $\Gamma_{e_1,e_2}$ from $\kkE(e_1,e_2)$ to $\Hom_{\Lambda}(\kE(e_1),\kE(e_2))$ is an isomorphism, for extensions $e_1$ and $e_2$ of separable, nuclear $C\sp*$-algebras in the bootstrap category $\mathcal{N}$ with $K$-groups of the associated cyclic six term exact sequences being finitely generated, zero exponential map and with the $K_1$-groups of the quotients being free abelian groups.

\begin{lemma}[General 5-lemma]
  Let there be given a commutative diagram
  $$\xymatrix{
    A\ar[d]^\alpha\ar[r]&
    B\ar[d]^\beta\ar[r]&
    C\ar[d]^\gamma\ar[r]&
    D\ar[d]^\delta\ar[r]&
    E\ar[d]^\epsilon \\
    A'\ar[r]&
    B'\ar[r]&
    C'\ar[r]&
    D'\ar[r]&
    E' }
  $$
  with abelian groups and group homomorphisms, where both rows are
  complexes. Then the following holds:
  
  \begin{enumerate}
  \item 
    If the first row is exact at $C$, the second row is exact at $B'$,
    and $\alpha$ is surjective, and $\beta$ and $\delta$ are
    injective, then $\gamma$ is injective.
  \item
    If the first row is exact at $D$, the second row is exact at $C'$,
    and $\beta$ and $\delta$ are surjective, and $\epsilon$ is
    injective, then $\gamma$ is surjective.
  \end{enumerate}
\end{lemma}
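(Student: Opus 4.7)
The plan is to prove both statements by routine diagram chases, being careful to invoke only the available hypotheses (weaker than in the usual five-lemma: the rows are assumed only to be complexes, and exactness is given at just one position in each row). For the chase, label the horizontal maps in the top row $f_1,\ldots,f_4$ and in the bottom row $g_1,\ldots,g_4$, so that commutativity reads $g_i\circ(\text{vertical})=(\text{vertical})\circ f_i$ in the evident sense.

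For part (1), given $c\in C$ with $\gamma(c)=0$, I would first chase to the right: commutativity and injectivity of $\delta$ force $f_3(c)=0$, so exactness of the top row at $C$ produces $b\in B$ with $f_2(b)=c$. I would then chase $\beta(b)$ to the left: commutativity places $\beta(b)$ in $\ker g_2$, and exactness of the bottom row at $B'$ lifts it to some $a'\in A'$. Surjectivity of $\alpha$ provides $a\in A$ with $\alpha(a)=a'$, injectivity of $\beta$ combined with commutativity identifies $f_1(a)$ with $b$, and the complex condition on the top row then collapses to $c=f_2f_1(a)=0$.

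For part (2), given $c'\in C'$, I would construct a preimage by the standard correction trick. Surjectivity of $\delta$ yields $d\in D$ with $\delta(d)=g_3(c')$; the complex condition on the bottom row combined with injectivity of $\epsilon$ forces $f_4(d)=0$; exactness of the top row at $D$ then lifts $d$ to $c\in C$ with $f_3(c)=d$. By construction $\gamma(c)-c'$ lies in $\ker g_3$, so exactness of the bottom row at $C'$ provides $b'\in B'$ with $g_2(b')=\gamma(c)-c'$; surjectivity of $\beta$ lifts $b'$ to $b\in B$, and $c-f_2(b)$ is then the desired preimage of $c'$ under $\gamma$.

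The only thing requiring care is the bookkeeping: each hypothesis is used exactly once in each part, and replacing any single exactness assertion by a mere complex condition, or weakening any of the injectivity/surjectivity hypotheses on one of the adjacent verticals, would break the chase at precisely the step indicated above. I do not anticipate any conceptual obstacle.
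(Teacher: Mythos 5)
Your proof is correct and is exactly the diagram chase the paper has in mind (its proof consists of the two words ``Diagram chase''). Both chases use each hypothesis in the intended place, and the correction trick in part (2) is the standard argument.
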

\begin{proof}
  Diagram chase. 
\end{proof}

\begin{propo}\label{prop:injckalgs}
  Let $e_1\colon \mathfrak{A}_0 \hookrightarrow \mathfrak{A}_1 \twoheadrightarrow \mathfrak{A}_2$ and
  $e_2\colon \mathfrak{B}_0 \hookrightarrow \mathfrak{B}_1  \twoheadrightarrow\mathfrak{B}_2$
  be extensions of separable, nuclear $C^{*}$-algebras in
  in the bootstrap category $\mathcal{N}$, with the associated cyclic six term exact sequences in $K$-theory being finitely generated, $K_1(\mathfrak{A}_0)=K_1(\mathfrak{B}_0)=0$,
  and $K_1(\mathfrak{A}_2)$ and $K_1(\mathfrak{B}_2)$ being torsion free. 

  Then $\Gamma_{e_1,e_2}$ is an isomorphism. 
\end{propo}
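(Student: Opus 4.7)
The plan is to apply the general five-lemma stated just above between the Bonkat six-term exact sequence in the first variable of $\kkE(-,\mathsf{S}e_2)$ and the contravariant Hom-sequence whose exactness at two key positions was secured in Theorems~\ref{t:inj} and~\ref{t:surj}. First, by Lemma~\ref{l:suspension} it is enough to show that $\Gamma_{\mathsf{S}e_1,\mathsf{S}e_2}$ is an isomorphism. The hypotheses on $e_1$ fit Assumption~\ref{assume1}: the cyclic six term sequence of $e_1$ is finitely generated, $G_3=K_1(\mathfrak{A}_0)=0$, and $G_5=K_1(\mathfrak{A}_2)$ is finitely generated torsion free, hence free. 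Thus the concrete projective resolution constructed in Section~2, combined with Proposition~\ref{p:projres}, produces a short exact sequence
\begin{equation*}
\mathsf{S}e_1 \overset{\phi}{\hookrightarrow} e'' \overset{\psi}{\twoheadrightarrow} e'
\end{equation*}
of extensions of separable, nuclear \cstar-algebras in $\mathcal{N}$, where $e'$ and $e''$ have the free abelian groups $F_i$ and $H_i$ as their six term invariants. In particular, $e'$, $e''$, and their suspensions all have free $K$-theory.

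Next I will assemble the commutative ladder whose top row is the $\kkE$-six term sequence associated with $\mathsf{S}e_1\hookrightarrow e''\twoheadrightarrow e'$ (Remark~\ref{r:exact}) and whose bottom row is the corresponding chain complex obtained by applying $\Hom_\Lambda(\kE(-),\kE(\mathsf{S}e_2))$, with vertical maps given by $\Gamma$. The relevant five-term window is
\begin{equation*}
\kkE(e',\mathsf{S}e_2) \to \kkE(e'',\mathsf{S}e_2) \to \kkE(\mathsf{S}e_1,\mathsf{S}e_2) \to \kkE(\mathsf{S}e',\mathsf{S}e_2) \to \kkE(\mathsf{S}e'',\mathsf{S}e_2)
\end{equation*}
mapping down to the corresponding five-term window of $\Hom_\Lambda$-groups. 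By Lemma~\ref{l:freekthy}, since $e'$, $e''$, $\mathsf{S}e'$, and $\mathsf{S}e''$ have free $K$-theory, the four outer vertical $\Gamma$-maps are all isomorphisms. The top row is exact throughout, and the bottom row is exact at the $\Hom_\Lambda(\kE(e''),\kE(\mathsf{S}e_2))$ slot by Theorem~\ref{t:inj} and at the $\Hom_\Lambda(\kE(\mathsf{S}e_1),\kE(\mathsf{S}e_2))$ slot by Theorem~\ref{t:surj}.

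Now apply the general five-lemma to the middle vertical arrow $\Gamma_{\mathsf{S}e_1,\mathsf{S}e_2}$: part~(1), with the first row exact at $\kkE(\mathsf{S}e_1,\mathsf{S}e_2)$, the second row exact at the $e''$ slot, and the three surrounding verticals isomorphisms, yields injectivity; part~(2), with the first row exact at $\kkE(\mathsf{S}e',\mathsf{S}e_2)$, the second row exact at the $\mathsf{S}e_1$ slot, and the three surrounding verticals again isomorphisms, yields surjectivity. Hence $\Gamma_{\mathsf{S}e_1,\mathsf{S}e_2}$ is an isomorphism, and Lemma~\ref{l:suspension} transports the conclusion back to $\Gamma_{e_1,e_2}$.

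The genuine difficulty has really been handled upstream: the delicate production of the geometric projective resolution with free-$K$-theory total spaces (Section~2) and the hard exactness statements for the contravariant $\Hom_\Lambda$-functor (Theorems~\ref{t:inj} and~\ref{t:surj}), whose proofs rely on the intricate interaction between the maps $\omega$, $\chi$, $\kappa$, $\varkappa$ of Lemma~\ref{l:bmaps} and the diagrams of Theorem~\ref{err:idealkthy}. Once those are in hand, the present proposition is a purely diagrammatic consequence and no further obstacle is expected.
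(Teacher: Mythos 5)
Your proposal is correct and follows essentially the same route as the paper: reduce to $\Gamma_{\mathsf{S}e_1,\mathsf{S}e_2}$ via Lemma~\ref{l:suspension}, invoke the geometric projective resolution of Section~2 and Proposition~\ref{p:projres}, note that the four outer $\Gamma$-maps are isomorphisms by Lemma~\ref{l:freekthy} since $\ksix(e')$ and $\ksix(e'')$ are free, and apply the general five-lemma using the exactness supplied by Theorems~\ref{t:inj} and~\ref{t:surj}. The only cosmetic difference is that you invoke Lemma~\ref{l:suspension} at the outset rather than at the end, which changes nothing.
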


\begin{proof}
  Let moreover $e'$ and $e''$ be as in the previous section. 
  Since $\Gamma_{ - , -}$ is natural,
  \begin{equation*}
    \def\objectstyle{\scriptstyle}
    \def\labelstyle{\scriptstyle}  
    \xymatrix{
      \kkE(e',\mathsf{S} e_2)\ar[r]\ar[d]_{\Gamma_{e',\mathsf{S}e_2}} 
      & \kkE(e'',\mathsf{S}e_2)\ar[r]\ar[d]_{\Gamma_{e'',\mathsf{S}e_2}} 
      & \kkE(\mathsf{S}e_1,\mathsf{S}e_2)\ar[d]_{\Gamma_{\mathsf{S}e_1,\mathsf{S}e_2}}\ar[r] 
      & \\
      \Hom_{\Lambda}(\kE(e'),\kE(\mathsf{S}e_2))\ar[r] 
      & \Hom_{\Lambda}(\kE(e''),\kE(\mathsf{S}e_2))\ar[r] 
      & \Hom_{\Lambda}(\kE(\mathsf{S}e_1),\kE(\mathsf{S}e_2))\ar[r] 
      & \\
      \ar[r] 
      & \kkE(\mathsf{S}e',\mathsf{S}e_2)\ar[d]_{\Gamma_{\mathsf{S}e',\mathsf{S}e_2}} \ar[r]
      & \kkE(\mathsf{S}e'',\mathsf{S}e_2)\ar[d]_{\Gamma_{\mathsf{S}e'',\mathsf{S}e_2}} 
      \\
      \ar[r]
      & \Hom_{\Lambda}(\kE(\mathsf{S}e'),\kE(\mathsf{S}e_2))\ar[r]
      & \Hom_{\Lambda}(\kE(\mathsf{S}e''),\kE(\mathsf{S}e_2)) 
      }
  \end{equation*}
  is commutative, the top row is exact, and by Theorems~\ref{t:inj}
  and~\ref{t:surj}, 
  \begin{equation*}
    \def\objectstyle{\scriptstyle}
    \def\labelstyle{\scriptstyle}  
    \xymatrix{
      \Hom_{\Lambda}(\kE(e'),\kE(\mathsf{S}e_2))\ar[r] 
      & \Hom_{\Lambda}(\kE(e''),\kE(\mathsf{S}e_2))\ar[r] 
      & \Hom_{\Lambda}(\kE(\mathsf{S}e_1),\kE(\mathsf{S}e_2))\ar[r] 
      & \Hom_{\Lambda}(\kE(\mathsf{S}e'),\kE(\mathsf{S}e_2)) } 
  \end{equation*}
  is also exact. 
  
  Since $\ksix(e')$ and $\ksix(e'')$ are projective,
  $\Gamma_{e',\mathsf{S}e_2}$, $\Gamma_{e'',\mathsf{S}e_2}$, 
  $\Gamma_{\mathsf{S}e',\mathsf{S}e_2}$, and  
  $\Gamma_{\mathsf{S}e'',\mathsf{S}e_2}$ are isomorphisms 
  (\cf\ Lemma~\ref{l:freekthy}).  
  So by the general 5-lemma (see above), $\Gamma_{\mathsf{S}e_1,\mathsf{S}e_2}$ is an isomorphism. 
  By Lemma \ref{l:suspension}, $\Gamma_{e_1,e_2}$ is an isomorphism.
\end{proof}

\begin{theor}\label{thm:main}
  Let $e_1\colon\mathfrak{A}_0 \hookrightarrow \mathfrak{A}_1 \twoheadrightarrow \mathfrak{A}_2$ and
  $e_2\colon \mathfrak{B}_0 \hookrightarrow \mathfrak{B}_1 \twoheadrightarrow \mathfrak{B}_2$ 
  be extensions of separable, nuclear $C^{*}$-algebras in the bootstrap category $\mathcal{N}$ with the associated cyclic six term exact sequences in $K$-theory being finitely generated, zero exponential map, and $K_1(\mathfrak{A}_2)$ and
  $K_1(\mathfrak{B}_2)$ being torsion free. 
  
  Then the natural map 
  $\ftn{\Gamma_{e_1,e_2}}{\kkE(e_1,e_2)}{\Hom_{\Lambda}(\kE(e_1),\kE(e_2))}$ 
  is an isomorphism. 
\end{theor}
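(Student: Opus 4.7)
The plan is to reduce Theorem~\ref{thm:main} to the already-established Proposition~\ref{prop:injckalgs} by extending the framework of Sections~2 and~3 to accommodate extensions with non-trivial $K_1$ of the ideal, subject only to the weaker hypothesis that the exponential map vanishes. The overall strategy mirrors that of Proposition~\ref{prop:injckalgs}: construct a geometric projective resolution of $\mathsf{S}e_1$, and apply the general five-lemma to a commutative diagram comparing the long exact sequence in $\kkE(-, \mathsf{S}e_2)$ with the corresponding sequence in $\Hom_{\Lambda}(\underline{K}_{\mathcal{E}}(-), \underline{K}_{\mathcal{E}}(\mathsf{S}e_2))$.

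The first step is to extend Proposition~\ref{p:projres} and Assumption~\ref{assume1}: given a cyclic six-term exact sequence $(G_i)_{i=0}^5$ of finitely generated abelian groups with zero exponential map and $G_5$ free (but $G_3$ possibly nonzero), one should construct a projective resolution $(H_i) \hookrightarrow (F_i) \twoheadrightarrow (G_i)$ with all $H_i$ and $F_i$ free abelian. The zero exponential hypothesis guarantees injectivity of $G_3 \to G_4$, which is the key structural input allowing the construction to go through; the additional care required lies in choosing a free cover of $G_3$ compatible with the covers of the other groups and with the connecting maps. One then realizes this resolution geometrically as a short exact sequence $\mathsf{S}e_1 \hookrightarrow e'' \twoheadrightarrow e'$ of extensions of stable Kirchberg algebras in the bootstrap category via R\o{}rdam's realization result and Kirchberg's lifting theorem, exactly as in Proposition~\ref{p:projres}. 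The second step is to re-prove the exactness statements of Section~3 (Theorems~\ref{t:inj} and~\ref{t:surj} and the supporting Lemmas~\ref{l:kken1}, \ref{l:red1}, and~\ref{l:red2}) for this generalized projective resolution. The explicit computation of $\ker(\underline{K}_{\mathcal{E}}(\psi)_{\mathfrak{e}_{n,1}})$ in Lemma~\ref{l:kken1} must be broadened to include the contributions arising from the larger $F_{1,3}$ groups, and the reduction arguments in Lemmas~\ref{l:red1} and~\ref{l:red2} must be reworked in the new setup.

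With these generalizations in hand, the five-lemma argument of Proposition~\ref{prop:injckalgs} transfers directly. Lemma~\ref{l:freekthy} still applies to $e'$ and $e''$ since their $\ksix$ groups are free by construction, showing that $\Gamma_{e',\mathsf{S}e_2}$, $\Gamma_{e'',\mathsf{S}e_2}$, and their suspension analogues are isomorphisms. The top row of the relevant diagram is exact by Remark~\ref{r:exact}, and the middle row is exact by the generalized versions of Theorems~\ref{t:inj} and~\ref{t:surj}. The general 5-lemma then yields that $\Gamma_{\mathsf{S}e_1,\mathsf{S}e_2}$ is an isomorphism, and Lemma~\ref{l:suspension} concludes that $\Gamma_{e_1,e_2}$ is an isomorphism. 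The main technical obstacle is the second step: the intricate computations of Section~3 depend on the specific form of the projective resolution produced under Assumption~\ref{assume1}, and adapting them requires careful bookkeeping to account for the extra complexity introduced by a non-trivial $G_3$ and the fact that $F_{1,3}$ and $F_{1,4}$ are no longer as constrained as in the original setting.
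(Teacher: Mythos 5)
Your plan reduces Theorem~\ref{thm:main} to Proposition~\ref{prop:injckalgs} by generalizing all of Sections~2 and~3 to allow $G_3\neq 0$, but that generalization is exactly where the difficulty lies, and it is asserted rather than carried out. The computations of Section~3 use $F_{1,3}^{e_1}=F_{1,3}^{e_2}=0$ pervasively and essentially, not incidentally: in Lemma~\ref{l:red1} the surjectivity of $h^{1,k,out}_{k,4,e_1}$; in Lemma~\ref{l:red2} the injectivity of $h^{1,1,out}_{n,2,e_i}$, $\beta_{n,3}^{e_i}$, $h^{n,1,in}_{n,4,e_i}$ and of $\tilde f^{\,e_2}_{1,4}$; and in Theorem~\ref{t:inj} the forced vanishing $\alpha_0=\alpha_3=\alpha_4=0$ and $\beta_0=\beta_3=0$, which organizes the entire construction of the chain map $(\beta_i)_{i=0}^5$. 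With $G_3\neq 0$ none of these hold, the structure of the projective resolution after Assumption~\ref{assume1} changes, and it is far from clear that the arguments survive; saying the lemmas ``must be reworked'' with ``careful bookkeeping'' leaves the core of the proof unproven.

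More importantly, you miss the mechanism the paper actually uses to pass from Proposition~\ref{prop:injckalgs} to Theorem~\ref{thm:main}, which makes the generalization of Section~3 unnecessary. Since the exponential map is zero and $K_1(\mathfrak{A}_2)$, $K_1(\mathfrak{B}_2)$ are free, the cyclic six-term sequence splits as a direct sum of a sequence with vanishing $K_1$ of the ideal and a sequence with four zero groups (only $G_3\xrightarrow{\;\cong\;}G_4$ surviving); the short exact sequence $0\to G_3\to G_4\to \operatorname{im}(G_4\to G_5)\to 0$ splits because the image is free. One realizes this splitting by a $\kkE$-equivalence $e_j\sim e_{j,1}\oplus e_{j,0}$ via Bonkat's UCT, and then verifies $\Gamma$ on each summand pairing: $\Gamma_{e_{1,1},e_{2,1}}$ by Proposition~\ref{prop:injckalgs} (after replacing $e_{1,1}$ by a $\kkE$-equivalent extension of the required form), and the remaining pairings by Lemma~\ref{l:freekthy} together with \cite[Lemma~7.1.5]{bonkat} and the Dadarlat--Loring UMCT. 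This direct-sum reduction is the entire content of the paper's proof of the theorem; without it, or without a genuinely completed generalization of Sections~2 and~3, your argument does not establish the statement.
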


\begin{proof}
  Note that for each $j=1,2$, we have that $\ksix(e_j)\cong\ksix(e_{j,1})\oplus\ksix(e_{j,0})$ where
  $e_{j,i}$ is an extension of separable, nuclear $C^{*}$-algebras in
  the bootstrap category $\mathcal{N}$ such that $\ksix(e_{j,1})$ is of the form 
  \begin{equation*}
    \xymatrix{ 
      \ast \ar[r] & \ast \ar[r] &  \ast \ar[d] \\
      \ast \ar[u] & \ast \ar[l] & 0 \ar[l]  }
  \end{equation*}
  and $\ksix(e_{j,0})$ is of the form 
  \begin{equation*}
    \xymatrix{ 
      0 \ar[r] & 0 \ar[r] &  0 \ar[d] \\
      0\ar[u] & \ast \ar[l] & \ast \ar[l]
      }
  \end{equation*}
  where the $K_1$'s of $\ksix(e_{j,1})$ is finitely generated and free, and the $K_{1}$'s of $\ksix(e_{j,0})$ are finitely
  generated.

By Bonkat's UCT and since $\Gamma_{-,-}$ is natural, we have that the diagrams
\begin{equation*}
    \xymatrix{
      \kkE(e_1,e_2)\ar[r]^{\cong}\ar[d]_{\Gamma_{e_1,e_2}}
      & \kkE(e_{1,1}\oplus e_{1,0},e_2)
      \ar[d]^{\Gamma_{e_{1,1}\oplus e_{1,0},e_2}}  \\
      \Hom_{\Lambda}(\kE(e_1),\kE(e_2)\ar[r]_-{\cong} 
      & \Hom_{\Lambda}(\kE(e_{1,1}\oplus e_{1,0}),\kE(e_2)) , }
  \end{equation*}
  \begin{equation*}
    \xymatrix{
      \kkE(e_{1,1}\oplus e_{1,0},e_2)\ar[r]^{\cong}\ar[d]_{\Gamma_{e_{1,1}\oplus e_{1,0},e_2}} 
      & \kkE(e_{1,1}\oplus e_{1,0},e_{2,1}\oplus e_{2,0})\ar[d]^{\Gamma_{e_{1,1}\oplus e_{1,0},e_{2,1}\oplus e_{2,0}}} \\
      \Hom_{\Lambda }(\kE(e_{1,1}\oplus e_{1,0}),\kE(e_2))\ar[r]_-{\cong} 
      & \Hom_{\Lambda}(\kE(e_{1,1}\oplus e_{1,0}),\kE(e_{2,1}\oplus e_{2,0}))  }
  \end{equation*}
  are commutative.
    
  Hence, to prove that $\Gamma_{e_1,e_2}$ is an isomorphism it is enough to
  prove that  
  $\Gamma_{e_{1,1}\oplus e_{1,0 },e_{2,1}\oplus e_{2,0}}$ is an isomorphism.  
  Since $\Gamma_{e_{1,0},e_{2,1}\oplus e_{2,0}}$ and
  $\Gamma_{e_{1,1},e_{2,0}}$ are isomorphisms (this follows from Lemma~\ref{l:freekthy} and an argument using \cite[Lemma~7.1.5]{bonkat} and the UMCT of D\u{a}d\u{a}rlat and Loring, \cf\ \cite{multcoeff}), it is enough to prove
  that $\Gamma_{e_{1,1},e_{2,1}}$ is an isomorphism.

  Since $e_{1,1}$ is $\kkE$-equivalent to an extension as in 
  Proposition~\ref{prop:injckalgs}, by the naturality of 
  $\Gamma_{-,-}$ and by Proposition~\ref{prop:injckalgs}, 
  $\Gamma_{e_{1,1},e_{2,1}}$ is an isomorphism.
\end{proof}

\begin{remar}
The class of extensions that the above theorem applies to contains the following classes: 
\begin{itemize}
\item
The class of all Cuntz-Krieger algebras satisfying condition (II) of Cuntz with one specified ideal.
\item 
The class of all purely infinite Cuntz-Krieger algebras with one specified ideal.
\item 
The class of all Cuntz-Krieger algebras with one specified gauge invariant ideal.
\item 
The class of all graph algebras satisfying condition (K) with one specified ideal and finitely generated $\ksix$. 
\item 
The class of all unital graph algebras satisfying condition (K) with one specified ideal. 
\item 
The class of all purely infinite graph algebras with one specified ideal and finitely generated $\ksix$.
\item 
The class of all graph algebras with one specified gauge invariant ideal and finitely generated $\ksix$.
\end{itemize}
\end{remar}

\begin{remar}\label{rem-remaboutscopeofinvariant}
We do not know of any counterexample to a general UMCT for the ideal-related $K$-theory with coefficients, neither do we know how to prove a general UMCT with one specified ideal. 
But it is clear that we can get the analogue result when we rotate the conditions on the $K$-theory. We also get the analogue result when either of the variables has four zero groups in the associated cyclic six term exact sequence. Moreover, it is possible do certain direct sums of these as well. 
\end{remar}

\section{Reduced invariant}

\begin{remar}
  By going over the proof once again, it is clear that if we use the invariant consisting of all the groups $F_{1,i}^e$, $F_{n,i}^e$, $H_{n,i}^e$ for all $n\in\N_{\geq 2}$ and $i=0,1,2,3,4,5$, and all the homomorphisms $f_{1,i}^e$, $f_{n,i}^e$, $\beta_{n,i}^e$, $\rho_{n,i}^e$, $\kappa_{n,mn,i}^e$, $\varkappa_{mn,m,i}^e$, $\omega_{n,mn,i}^e$, $\chi_{mn,m,i}^e$, $h_{n,i,e}^{1,1,in}$, $h_{n,i,e}^{1,1,out}$, $h_{n,i,e}^{1,n,in}$, $h_{n,i,e}^{1,n,out}$, $h_{n,i,e}^{n,1,in}$, $h_{n,i,e}^{n,1,out}$ for all $m,n\in\N_{\geq 2}$ and $i=0,1,2,3,4,5$, then the corresponding theorem still holds, \ie, Theorem~\ref{thm:main} still holds if we replace $\Hom_{\Lambda}(\kE(e_1),\kE(e_2))$ with the homomorphisms between the groups only respecting those natural transformations mentioned above. 
\end{remar}

\begin{remar}
  We do not know yet whether these are all the homomorphisms to include. Let $e_1 \colon \mathfrak{A}_{0} \hookrightarrow \mathfrak{A}_{1} \twoheadrightarrow \mathfrak{A}_{2}$ and $e_2 \colon \mathfrak{B}_{0} \hookrightarrow \mathfrak{B}_{1} \twoheadrightarrow \mathfrak{B}_{2}$ be extensions of separable \cstar-algebras, with $e_1$ in the bootstrap category $\mathcal{N}$. Then we have -- as noted in \cite[Remark~6.6]{err:idealkthy} -- natural homomorphisms $G_i\colon \kkE(e_1,e_2)\rightarrow \kk(\A_i,\B_i)$, for $i=0,1,2$. Using \cite[Remark~6.6]{err:idealkthy}, we see that $G_0$ is an isomorphism if $K_0(\A_2)=K_1(\A_2)=0$, $G_1$ is an isomorphism if $K_0(\A_0)=K_1(\A_0)=0$, $G_2$ is an isomorphism if $K_0(\A_1)=K_1(\A_1)=0$. Using this, the UCT of Rosenberg and Schochet and the UCT of Bonkat, it is not so hard to see that $\kkE(e,e')$ for $e,e'\in\{\mathfrak{f}_{n,i}:n\in\N_{\geq 2},i=0,1,2,3,4,5\}$ are generated by the $\kkE$-classes of the maps mentioned above. We believe that it is true also when one of the entries is of the form $\mathfrak{e}_{n,i}$ and the proof should be similar as well. For $\kkE(\mathfrak{e}_{n,i},\mathfrak{e}_{m,j})$ Example~9.1 of \cite{err:idealkthy} indicates that there might be some natural homomorphisms we have not described yet. 
\end{remar}

\begin{defin}
  We now define a reduced invariant, the \bolddefine{reduced ideal-related $K$-theory with coefficients} $\kE^\textnormal{red}(e)$, which consists of all the groups $F_{1,i}^e$, $F_{n,i}^e$, $H_{n,4}^e$ for all $n\in\N_{\geq 2}$ and $i=0,1,2,3,4,5$. A homomorphism between such invariants should be a family of group homomorphisms between these groups which respects the natural homomorphisms  $f_{1,i}^e$, $f_{n,i}^e$, $\beta_{n,i}^e$, $\rho_{n,i}^e$, $\kappa_{n,mn,i}^e$, $\varkappa_{mn,m,i}^e$, $\omega_{n,mn,4}^e$, $\chi_{mn,m,4}^e$, $h_{n,4,e}^{1,1,in}$, $h_{n,4,e}^{1,1,out}$, $h_{n,4,e}^{1,n,in}$, $h_{n,4,e}^{1,n,out}$, $h_{n,4,e}^{n,1,in}$, $h_{n,4,e}^{n,1,out}$ for all $m,n\in\N_{\geq 2}$ and $i=0,1,2,3,4,5$.
\end{defin}

\begin{theor}\label{thm:mainreduced}
  Let $e_1\colon\mathfrak{A}_0 \hookrightarrow \mathfrak{A}_1 \twoheadrightarrow \mathfrak{A}_2$ and
  $e_2\colon \mathfrak{B}_0 \hookrightarrow \mathfrak{B}_1 \twoheadrightarrow \mathfrak{B}_2$ 
  be extensions of separable, nuclear $C^{*}$-algebras in the bootstrap category $\mathcal{N}$ with the associated cyclic six term exact sequences in $K$-theory being finitely generated, zero exponential map, and $K_1(\mathfrak{A}_2)$ and
  $K_1(\mathfrak{B}_2)$ being torsion free.  
  
  Then the natural map 
  $\ftn{\Gamma_{e_1,e_2}^\textnormal{red}}{\kkE(e_1,e_2)}{\Hom_{\Lambda^{\textnormal{red}}}(\kE^\textnormal{red}(e_1),\kE^\textnormal{red}(e_2))}$ 
  is an isomorphism. 
\end{theor}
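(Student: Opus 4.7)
The plan is to deduce Theorem~\ref{thm:mainreduced} from Theorem~\ref{thm:main} by showing that the natural forgetful homomorphism
\begin{equation*}
\pi_{e_1,e_2}\colon \Hom_{\Lambda}(\kE(e_1),\kE(e_2)) \to \Hom_{\Lambda^{\textnormal{red}}}(\kE^{\textnormal{red}}(e_1),\kE^{\textnormal{red}}(e_2))
\end{equation*}
is a bijection. Since $\Gamma^{\textnormal{red}}_{e_1,e_2}=\pi_{e_1,e_2}\circ\Gamma_{e_1,e_2}$ and the right factor is already an isomorphism by Theorem~\ref{thm:main}, this will complete the proof. As in the proof of Theorem~\ref{thm:main}, I first apply the splitting $\ksix(e_j)\cong\ksix(e_{j,1})\oplus\ksix(e_{j,0})$ for $j=1,2$; the $e_{j,0}$-pieces are disposed of using Bonkat's UCT and the UMCT of D\u{a}d\u{a}rlat--Loring exactly as in Theorem~\ref{thm:main}, so we may assume $e_1$ is $\kkE$-equivalent to an extension satisfying Assumption~\ref{assume1}.

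Injectivity of $\pi_{e_1,e_2}$ is immediate from Lemma~\ref{l:red2}: if $\alpha \in \Hom_\Lambda(\kE(e_1),\kE(e_2))$ restricts to zero on $\kE^{\textnormal{red}}(e_1)$, then in particular $\alpha_{\mathfrak{f}_{1,k}}=0$ for every $k=0,1,\ldots,5$ and $\alpha_{\mathfrak{e}_{n,4}}=0$ for every $n\in\N_{\geq 2}$, forcing $\alpha=0$ on all of $\kE(e_1)$.

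For surjectivity of $\pi_{e_1,e_2}$, the plan is to repeat the proof of Proposition~\ref{prop:injckalgs} with $\Hom_{\Lambda^{\textnormal{red}}}$ in place of $\Hom_\Lambda$ throughout. This requires three ingredients: a reduced version of Lemma~\ref{l:freekthy} (whose proof transfers verbatim because it only uses $\rho_{n,i+1}$, $h^{1,1,in}_{n,4,e_1}$, and $h^{1,n,in}_{n,4,e_1}$, all of which belong to $\Lambda^{\textnormal{red}}$); a reduced version of Lemma~\ref{l:suspension} (which follows by the same derived-functor computation); and reduced analogs of Theorems~\ref{t:inj} and~\ref{t:surj}. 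Granting these, the $5$-lemma argument of Proposition~\ref{prop:injckalgs} yields that $\Gamma^{\textnormal{red}}_{\mathsf{S}e_1,\mathsf{S}e_2}$ is an isomorphism, and the reduced suspension lemma then gives it for $e_1,e_2$.

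The main obstacle is establishing the reduced analogs of Theorems~\ref{t:inj} and~\ref{t:surj}. The existing constructions of the chain maps $(\beta_i)_{i=0}^5$ in those proofs invoke the $H_{n,1}^{e''}$-action of $\alpha$, which does not a priori form part of $\kE^{\textnormal{red}}(e'')$. One works around this by using the exact sequences in Theorem~\ref{err:idealkthy}, in the spirit of Lemmas~\ref{l:red1} and~\ref{l:red2} run in reverse: rather than invoking $\alpha_{\mathfrak{e}_{n,1}}$ directly, one re-expresses the required evaluations as combinations of the values of $\alpha$ on $F$-groups and on $H_{n,4}$-groups, all of which are available in the reduced invariant. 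The torsion-freeness of $K_1(\mathfrak{B}_2)$ and the vanishing of $K_1(\mathfrak{B}_0)$ in the target extension are precisely what make these re-expressions possible. Once this translation has been carried out, the constructions of $(\beta_i)$ proceed exactly as in the original arguments, and Proposition~\ref{prop:injckalgs}'s proof delivers surjectivity.
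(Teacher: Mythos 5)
Your proposal matches the paper's proof, which consists entirely of the observation that the argument for Theorem~\ref{thm:main} only ever uses the data retained in $\kE^{\textnormal{red}}$, so that one simply ``goes through the whole proof once again.'' Your factorization through the forgetful map $\pi_{e_1,e_2}$, with injectivity extracted directly from Lemmas~\ref{l:red1} and~\ref{l:red2}, is a slightly cleaner packaging of the same argument, and your surjectivity half is exactly the paper's (unwritten) verification that Lemma~\ref{l:freekthy} and Theorems~\ref{t:inj} and~\ref{t:surj} survive the passage to $\Lambda^{\textnormal{red}}$.
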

\begin{proof}
  By going through the whole proof once again, we see that this is all we need in order to prove the theorem. 
\end{proof}

\begin{remar}
  There are of course a few informations in the invariant which are uniquely determined -- \eg the homomorphism from $K_0(\A_2)$ to $K_0(\B_2)$ is uniquely determined by the rest of the invariant. But it would make the definition of a reduced invariant quite fragmented if we were to take all such things out of the invariant. 
\end{remar}

\section{Automorphisms of Cuntz-Krieger algebras}

In this section, we use our invariant $\kE^\textnormal{red}$ to classify the automorphism group of a stabilized Cuntz-Krieger algebra with exactly one non-trivial ideal up to equivalences: unitary homotopy equivalence and approximate unitary equivalence.

\subsection{Unitary homotopy equivalence}

\begin{defin}
  Let $\A$ and $\B$ be \cstar-algebras. We say that two \starhoms $\varphi,\psi\colon \A\rightarrow \multialg{ \B }$ are \bolddefine{unitarily homotopic} if there exists a norm-continuous map $[0,\infty)\ni t\mapsto u_t\in U( \multialg{ \B })$ such that for $a\in\A$ and $t\in[0,\infty)$ we have that
$$\varphi(a)-u_t^*\psi(a)u_t\in\B$$
and, moreover, 
$$\lim_{t\rightarrow\infty}u_t^*\psi(a)u_t=\varphi(a),$$
for all $a\in\A$. 
If the images of $\varphi$ and $\psi$ lie inside $\B$, the first
condition is superfluous.

For each \cstar-algebra \A we let $\operatorname{\overline{Inn}}_h(\A)$ denote the set of automorphisms of $\A$, which are unitarily homotopic to the identity morphism on $\A$. 
\end{defin}

Kirchberg has a very general classification theorem for (strongly) purely infinite \cstar-algebras in \cite[Folgerung~4.3]{kirchpure}, which in the case we are looking at specializes to the following.

\begin{theor}[Kirchberg]
  Let $e\colon\mathfrak{A}_0 \hookrightarrow \mathfrak{A}_1 \twoheadrightarrow \mathfrak{A}_2$ 
  be an essential extension of separable, nuclear, stable, simple, purely infinite $C^{*}$-algebras in the bootstrap category $\mathcal{N}$. Then we have a short exact sequence of groups
  $$\xymatrix{\{1\}\ar[r] & \operatorname{\overline{Inn}}_h(\A_1)\ar[r] & \operatorname{Aut}(\A_1)\ar[r] & \kkE^{-1}(e,e)\ar[r] & \{1\},}$$
 where the group operation on $\operatorname{Aut}(\A_1)$ and $\kkE( e, e)^{-1}$ are composition of maps and Kasparov product, respectively. 
\end{theor}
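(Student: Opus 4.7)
The plan is to deduce this as a specialization of Kirchberg's classification theorem \cite[Folgerung~4.3]{kirchpure}, whose hypotheses need only be verified in the present setting. First I would argue that $\A_1$ is strongly purely infinite: since $\A_0$ and $\A_2$ are stable Kirchberg algebras they are $\mathcal{O}_\infty$-absorbing, and Remark~\ref{rem:strong-pure-infiniteness} then supplies that $\A_1$ itself is $\mathcal{O}_\infty$-absorbing, hence separable, nuclear, stable and strongly purely infinite. Moreover, since $\A_0$ and $\A_2$ are simple, $\A_0$ is the unique nontrivial ideal of $\A_1$, so every \starauto of $\A_1$ must preserve $\A_0$ and descends to an isomorphism $e\to e$ of extensions.

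Next I would set up the group homomorphism $\Aut(\A_1)\to\kkE^{-1}(e,e)$ sending $\phi$ to its induced class $\kkE(\phi)\in\kkE(e,e)$; functoriality of $\kkE$ together with $\phi\circ\phi^{-1}=\id$ shows this class is invertible, and that composition of automorphisms corresponds to the Kasparov product of their classes. To get exactness I would then invoke Kirchberg's theorem in its ideal-equivariant form, whose two halves are: \emph{existence}, that every invertible element of Kirchberg's ideal-equivariant $\kk$-group is realized by some \starauto of $\A_1$; and \emph{uniqueness}, that two \starautos induce the same $\kkE$-class if and only if they are unitarily homotopic in the sense defined above. Part~(i) delivers surjectivity, while part~(ii) identifies the kernel with $\operatorname{\overline{Inn}}_h(\A_1)$.

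The main obstacle, aside from invoking Kirchberg's deep theorem as a black box, is ensuring that the ideal-equivariant $\kk$-group appearing in his general statement specializes, for the two-point lattice $\{0,\A_0,\A_1\}$, to the $\kkE(e,e)$ used throughout this paper; this amounts to unwinding definitions. The remaining checks --- that the form of $\mathcal{O}_\infty$-absorption assumed by Kirchberg is exactly what Remark~\ref{rem:strong-pure-infiniteness} supplies, and that the lift of a \starauto of $\A_1$ to an automorphism of the extension is sufficiently functorial to translate the Kasparov product on classes into composition of automorphisms --- are then routine bookkeeping.
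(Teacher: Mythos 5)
Your proposal is correct and follows essentially the same route as the paper, which offers no independent proof of this statement but simply records it as the specialization of Kirchberg's Folgerung~4.3 of \cite{kirchpure} to an extension with ideal lattice $\{0,\mathfrak{A}_0,\mathfrak{A}_1\}$, relying (as you do) on Remark~\ref{rem:strong-pure-infiniteness} for strong pure infiniteness and on simplicity of the subquotients to see that every automorphism preserves the unique nontrivial ideal. The verifications you flag as routine bookkeeping are exactly the ones the paper leaves implicit.
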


When we combine this theorem with the UMCT from previous sections we get the following corollary. 

\begin{corol}\label{c:unithomotopy}
  Let $e\colon\mathfrak{A}_0 \hookrightarrow \mathfrak{A}_1 \twoheadrightarrow \mathfrak{A}_2$ 
  be an essential extension of separable, nuclear, stable, simple, purely infinite $C^{*}$-algebras in the bootstrap category $\mathcal{N}$ with all groups in the associated cyclic six term exact sequence being finitely
  generated, zero exponential map, and $K_1(\mathfrak{A}_2)$ being torsion free. 
  Then we have a short exact sequence of groups
  $$\xymatrix{\{1\}\ar[r] & \operatorname{\overline{Inn}}_h(\A_1)\ar[r] & \operatorname{Aut}(\A_1)\ar[r] & \operatorname{Aut}_{\Lambda^{\textnormal{red}}}(\kE^\textnormal{red}(e))\ar[r] & \{1\}.}$$
\end{corol}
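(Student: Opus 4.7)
The plan is to combine Kirchberg's theorem quoted just above with the reduced UMCT (Theorem~\ref{thm:mainreduced}) and check that the isomorphism $\Gamma_{e,e}^{\textnormal{red}}$ transports the multiplicative structure, so that units go to units.

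First I would verify that the hypotheses of Theorem~\ref{thm:mainreduced} are satisfied with $e_1=e_2=e$: the extension $e$ is of separable, nuclear $C^{*}$-algebras in $\mathcal{N}$, the cyclic six term sequence is finitely generated, the exponential map is zero, and $K_1(\mathfrak{A}_2)$ is torsion free. Hence
\begin{equation*}
\Gamma_{e,e}^{\textnormal{red}}\colon \kkE(e,e)\longrightarrow \Hom_{\Lambda^{\textnormal{red}}}(\kE^{\textnormal{red}}(e),\kE^{\textnormal{red}}(e))
\end{equation*}
is an isomorphism of abelian groups.

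Next I would note that both sides carry a natural ring structure: on $\kkE(e,e)$ it is the Kasparov product, and on the target it is composition of homomorphisms of the reduced invariant. The map $\Gamma_{e,e}^{\textnormal{red}}$ is defined by $x\mapsto (-)\times x$, and associativity of the Kasparov product together with the way the structural homomorphisms on $\kE^{\textnormal{red}}$ arise (as Kasparov products with fixed classes, cf.\ the definition of the functor and the discussion in \cite{err:idealkthy}) means that $\Gamma_{e,e}^{\textnormal{red}}$ is in fact a ring anti-isomorphism (or, with the standard convention on composition, a ring isomorphism between $\kkE(e,e)$ with the opposite product and the endomorphism ring of $\kE^{\textnormal{red}}(e)$). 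In either case, an element $x\in\kkE(e,e)$ is invertible (\ie\ lies in $\kkE^{-1}(e,e)$) if and only if the induced endomorphism $\Gamma_{e,e}^{\textnormal{red}}(x)$ lies in $\Aut_{\Lambda^{\textnormal{red}}}(\kE^{\textnormal{red}}(e))$. Thus $\Gamma_{e,e}^{\textnormal{red}}$ restricts to a group isomorphism
\begin{equation*}
\kkE^{-1}(e,e)\;\xrightarrow{\;\cong\;}\;\Aut_{\Lambda^{\textnormal{red}}}(\kE^{\textnormal{red}}(e)).
\end{equation*}

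Finally I would plug this isomorphism into Kirchberg's short exact sequence, observing that the composite
\begin{equation*}
\Aut(\mathfrak{A}_1)\longrightarrow \kkE^{-1}(e,e)\xrightarrow{\Gamma_{e,e}^{\textnormal{red}}} \Aut_{\Lambda^{\textnormal{red}}}(\kE^{\textnormal{red}}(e))
\end{equation*}
is a group homomorphism (here one uses that Kirchberg's map $\Aut(\mathfrak{A}_1)\to\kkE^{-1}(e,e)$ is a group homomorphism and that $\Gamma_{e,e}^{\textnormal{red}}$ is multiplicative by the preceding paragraph), it is surjective since both factors are, and its kernel coincides with the kernel in Kirchberg's sequence, namely $\overline{\mathrm{Inn}}_h(\mathfrak{A}_1)$. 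This yields the claimed short exact sequence. The main obstacle is the multiplicativity check in the second step; everything else is formal once Theorem~\ref{thm:mainreduced} is in hand.
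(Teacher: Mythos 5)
Your proposal is correct and follows exactly the route the paper intends: the paper gives no separate proof of this corollary, merely stating that it follows by combining Kirchberg's theorem with the (reduced) UMCT, which is precisely what you carry out, including the correct observation that $\Gamma_{e,e}^{\textnormal{red}}$ is multiplicative up to reversal of the order of composition and hence carries $\kkE$-invertibles onto $\Aut_{\Lambda^{\textnormal{red}}}(\kE^{\textnormal{red}}(e))$. Your write-up in fact supplies more detail than the paper does.
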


\begin{remar}
We can apply the above corollary to the following $C^{*}$-algebras:
\begin{itemize}
\item[(1)] $\A_1 = \B_{1} \otimes \K$, where $\B_{1}$ is a Cuntz-Krieger algebra satisfying condition (II) of Cuntz with exactly one non-trivial ideal. 

\item[(2)]  $\A_1 = \B_{1} \otimes \K$, where $\B_{1}$ is a purely infinite graph algebra with exactly one non-trivial ideal and finitely generated $K$-theory.  
\end{itemize} 
\end{remar}

\subsection{Approximately unitary equivalence}

\begin{defin}
  Let $\A$ and $\B$ be \cstar-algebras. We say that two \starhoms $\varphi,\psi\colon \A\rightarrow \multialg{ \B }$ are \bolddefine{approximately unitarily equivalent} if there exists a sequence of elements $\{ u_{n} \}_{ n = 1}^{\infty}$ in $U( \multialg{ \B } )$ such that for $a\in\A$ and $n \in \N$ we have that
$$\varphi(a)-u_n^*\psi(a)u_n\in\B$$
and, moreover, 
$$\lim_{n\rightarrow\infty}u_n^*\psi(a)u_n=\varphi(a),$$
for all $a\in\A$. 
If the images of $\varphi$ and $\psi$ lie inside $\B$, the first
condition is superfluous.

For each \cstar-algebra \A we let $\operatorname{\overline{Inn}}(\A)$ denote the set of automorphisms of $\A$, which are approximately unitarily equivalent to the identity morphism on $\A$. 
\end{defin}

\begin{theor}\label{t:aue}
Let $e_{1} \colon \mathfrak{A}_{0} \hookrightarrow \mathfrak{A}_{1} \twoheadrightarrow \mathfrak{A}_{2}$ and $e_{2} \colon \mathfrak{B}_{0} \hookrightarrow \mathfrak{B}_{1} \twoheadrightarrow \mathfrak{B}_{2}$ be extensions of separable $C^{*}$-algebras.  Let $\ftn{ \phi, \psi }{ e_{1} }{ e_{2} }$ be homomorphisms.  If $\mathfrak{A}_{1}$ is semiprojective, $\mathfrak{A}_{0}$ is generated by projections, and $\phi_{1}$ and $\psi_{1}$ are approximately unitarily equivalent, then $\kE ( \phi ) = \kE ( \psi )$.
\end{theor}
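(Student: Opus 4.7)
The plan is to propagate the hypothesis that $\phi_1$ and $\psi_1$ are approximately unitarily equivalent down to the ideal and up to the quotient, and then to compare $\kE(\phi)$ and $\kE(\psi)$ componentwise on each summand of $\kE(e_1)$. The two technical hypotheses enter at distinct points: generation of $\mathfrak{A}_0$ by projections handles the ideal-level $K$-theoretic components, while semiprojectivity of $\mathfrak{A}_1$ is used to control the components coming from the test extensions $\mathfrak{e}_{n,i}$.

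Fix a sequence $(u_n) \subseteq U(\mathcal{M}(\mathfrak{B}_1))$ realizing $u_n^*\psi_1(a)u_n \to \phi_1(a)$ for all $a\in\mathfrak{A}_1$. Their images in $\mathcal{M}(\mathfrak{B}_2)$ implement an approximate unitary equivalence of $\phi_2$ and $\psi_2$ directly. At the ideal level, conjugation by $u_n$ preserves the ideal $\mathfrak{B}_0 \subseteq \mathfrak{B}_1$, so $u_n^*\psi_0(a)u_n \in \mathfrak{B}_0$ and converges to $\phi_0(a)$ for every $a \in \mathfrak{A}_0$. For a projection $p \in \mathfrak{A}_0$, the partial isometry $v_n := \psi_0(p)u_n \in \mathfrak{B}_0$ witnesses a Murray--von~Neumann equivalence between $\psi_0(p)$ and $u_n^*\psi_0(p)u_n$ \emph{inside} $\mathfrak{B}_0$; for $n$ large, the latter is close enough to $\phi_0(p)$ in $\mathfrak{B}_0$ to force Murray--von~Neumann equivalence with it, so $[\phi_0(p)] = [\psi_0(p)]$ in $K_0(\mathfrak{B}_0)$. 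Since $\mathfrak{A}_0$ is generated by projections, this propagates to $\phi_{0*} = \psi_{0*}$ on $K_0(\mathfrak{A}_0)$, and an analogous stabilization argument in $\mathsf{M}_k(\mathfrak{B}_0)^+$ (to absorb the defect that $u_n \notin \mathcal{M}(\mathfrak{B}_0)$) handles $K_1(\mathfrak{A}_0)$ and the $K$-theory with $\Z_n$-coefficients. Combined with the standard fact that approximately unitarily equivalent $*$-homomorphisms induce the same map on total $K$-theory (\cf\ \cite{multcoeff}), this gives $\kE(\phi) = \kE(\psi)$ on every component of the form $\kkE(\mathfrak{f}_{n,i}, e_1)$.

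For the remaining components $\kkE(\mathfrak{e}_{n,i}, e_1)$, a class is represented by a morphism of extensions $\alpha\colon \mathfrak{e}_{n,i} \to e_1$, and one must show that $\phi\circ\alpha$ and $\psi\circ\alpha$ determine the same class in $\kkE(\mathfrak{e}_{n,i}, e_2)$. Here semiprojectivity of $\mathfrak{A}_1$ enters decisively: the stability of $*$-homomorphism relations for a semiprojective domain converts the pointwise convergence $u_n^*\psi_1 u_n \to \phi_1$ into a genuine $*$-homotopy $\mathfrak{A}_1 \to C([0,1], \mathfrak{B}_1)$ for $n$ sufficiently large. Arranging this homotopy to respect the ideal --- using projection-generation of $\mathfrak{A}_0$ to track the ideal component through explicit Murray--von~Neumann equivalences in $\mathfrak{B}_0$ --- and to descend compatibly to the quotient produces a morphism-of-extensions homotopy connecting suitable unitary conjugates of $\psi\circ\alpha$ and $\phi\circ\alpha$; homotopy invariance of $\kkE$ then yields the desired equality. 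The main obstacle is precisely this last coordination step: each of the three levels is individually controllable by its corresponding hypothesis, but assembling them into a simultaneous extension-level homotopy is the technical heart of the argument.
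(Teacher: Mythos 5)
Your overall strategy for the hard part coincides with the paper's, but the proposal stops exactly where the proof has to be carried out. The paper's argument is shorter and global: by semiprojectivity of $\mathfrak{A}_1$ and \cite[Proposition~2.3]{mdge_onepara} there is a unitary $u\in\multialg{\mathfrak{B}_1}$ and a homotopy $\Phi\colon\mathfrak{A}_1\to C([0,1],\mathfrak{B}_1)$ from $\mathrm{ad}(u)\circ\phi_1$ to $\psi_1$; one then shows that this homotopy automatically respects the ideals, so that it is a homotopy of morphisms of extensions, whence $\kkE(\phi)=\kkE(\mathrm{ad}(u)\circ\phi)=\kkE(\psi)$ and therefore $\kE(\phi)=\kE(\psi)$ on all components at once. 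Your componentwise analysis of the $\kkE(\mathfrak{f}_{n,i},e_1)$-summands is therefore unnecessary, and your treatment of the $\kkE(\mathfrak{e}_{n,i},e_1)$-summands rests on the unjustified assertion that every class there is represented by a morphism of extensions $\mathfrak{e}_{n,i}\to e_1$; neither is needed once $\kkE(\phi)=\kkE(\psi)$ is known, since $\kE(\phi)$ is by definition given by the Kasparov product with $\kkE(\phi)$.

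The genuine gap is the step you yourself flag as ``the technical heart'': you never show that the homotopy produced by semiprojectivity maps $\mathfrak{A}_0$ into $C([0,1],\mathfrak{B}_0)$. This is precisely where the hypothesis that $\mathfrak{A}_0$ is generated by projections is used, and the argument is concrete rather than a coordination problem: for a projection $p\in\mathfrak{A}_0$, choose a partition $0=t_0<t_1<\cdots<t_n=1$ such that $\|\mathrm{ev}_t\circ\Phi(p)-\mathrm{ev}_s\circ\Phi(p)\|<1$ for $t,s$ in the same subinterval, so that consecutive values are Murray--von Neumann equivalent projections in $\mathfrak{B}_1$; since $\mathrm{ev}_0\circ\Phi(p)=\mathrm{ad}(u)\circ\phi_1(p)\in\mathfrak{B}_0$, and a projection Murray--von Neumann equivalent in $\mathfrak{B}_1$ to an element of the ideal $\mathfrak{B}_0$ again lies in $\mathfrak{B}_0$ (if $v$ is a partial isometry in $\mathfrak{B}_1$ with $v^*v\in\mathfrak{B}_0$, then $vv^*=v(v^*v)v^*\in\mathfrak{B}_0$ because $\mathfrak{B}_0$ is an ideal), one concludes inductively along the partition that $\mathrm{ev}_t\circ\Phi(p)\in\mathfrak{B}_0$ for all $t\in[0,1]$. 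Without this step, homotopy invariance of $\kkE$ cannot be invoked, because $\Phi$ is a priori only a homotopy of maps into $\mathfrak{B}_1$. Note also that once the ideal is respected, the descent of the homotopy to the quotient is automatic, so there is no further assembly to perform.
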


\begin{proof}
We identify $\mathfrak{A}_{0}$ with its image in $\mathfrak{A}_{1}$ and $\mathfrak{B}_{0}$ with its image in $\mathfrak{B}_{1}$.  With this identification, $\phi_{0}$ becomes the restriction $\phi_{1} \vert_{ \mathfrak{A}_{0}}$.  Since $\mathfrak{A}_{1}$ is semiprojective, by Proposition 2.3 of \cite{mdge_onepara}, there exists a unitary $u \in \multialg{ \mathfrak{B}_{1} }$ such that $\mathrm{ad} ( u ) \circ  \phi_{1}$ is homotopic to $\psi_{1}$.  Let $\ftn{ \Phi }{ \mathfrak{A}_{1} }{ C( [ 0 , 1 ] , \mathfrak{B}_{1} ) }$ be a homotopy from $\mathrm{ad} ( u ) \circ  \phi_{1}$ to $\psi_{1}$. 

We claim that $\Phi (a) \in C( [ 0 ,1 ] , \mathfrak{B}_{0} )$ for all $a \in \mathfrak{A}_{0}$.  Since $\mathfrak{A}_{0}$ is generated by projections, it is enough to show that $\mathrm{ev}_{t} \circ \Phi (p) \in \mathfrak{B}_{0}$ for all projections $p \in \mathfrak{A}_{0}$ and for all $t \in [0,1]$.  Let $p$ be a projection in $\mathfrak{A}_{0}$.  Then there exists a partition $0 = t_{0} < t_{1} < \dots < t_{n-1} < t_{n} = 1$ of $[0,1]$ such that $\| \mathrm{ev}_{t} \circ \Phi(p) - \mathrm{ev}_{s} \circ \Phi(p) \| < 1$ for all $t,s \in [ t_{i-1} , t_{i} ]$.  In particular, $\| \mathrm{ev}_{0} \circ \Phi(p)- \mathrm{ev}_{t} \circ \Phi(p) \| < 1$ for all $t \in [ 0 , t_{1} ]$.  Therefore, $\mathrm{ev}_{t} \circ \Phi(p)$ is Murray-von Neumann equivalent to $\mathrm{ev}_{0} \circ \Phi(p) = \mathrm{ad} (u) \circ \phi_{1} (p)$ for all $t \in [ 0 , t_{1} ]$.  Since $\phi$ is a homomorphism from $e_{1}$ to $e_{2}$, we have that $\mathrm{ad}(u) \circ \phi_{1} (p) \in \mathfrak{B}_{0}$.  Since $\mathfrak{B}_{0}$ is an ideal of $\mathfrak{B}_{1}$, we have that $\mathrm{ev}_{t} \circ \Phi (p) \in \mathfrak{B}_{0}$ for all $t \in [ 0 , t_{1} ]$.  Using the same argument in the interval $[t_{1}, t_{2}]$, we get that $\mathrm{ev}_{t} \circ \Phi (p) \in \mathfrak{B}_{0}$ for all $t \in [ t_{1} , t_{2} ]$.  Continuing this process, we get that $\mathrm{ev}_{t} \circ \Phi (p) \in \mathfrak{B}_{0}$ for all $t \in [0,1]$.  Hence, $\Phi (p ) \in C( [0,1] , \mathfrak{B}_{0} )$.  We have just proved our claim.

We have shown that $\mathrm{ad}(u) \circ \phi_{1}$ and $\psi_{1}$ are homotopic with the homotopy respecting the canonical ideals $\mathfrak{A}_{0}$ and $\mathfrak{B}_{0}$.  Thus, $\kkE ( \mathrm{ad}(u) \circ \phi ) = \kkE( \psi )$.  Since $\kkE ( \mathrm{ad}(u) \circ \phi ) = \kkE( \phi )$, we have that $\kkE( \phi ) = \kkE(\psi)$.  Therefore, $\kE ( \phi ) = \kE ( \psi )$.
\end{proof}

We believe that Theorem~\ref{t:aue} is true without the semiprojectivity assumption and the assumption that the ideal $\mathfrak{A}_{0}$ is generated by projections but we have not been able to obtain a proof.

\begin{corol}\label{c:approx-unit}
Let $e\colon\mathfrak{A}_0 \hookrightarrow \mathfrak{A}_1 \twoheadrightarrow \mathfrak{A}_2$ be an essential extension of separable, nuclear, stable, simple purely infinite $C^{*}$-algebras in the bootstrap category $\mathcal{N}$ with finitely
  generated $K$-theory, zero exponential map, and $K_1(\mathfrak{A}_2)$ being torsion free.  If $\mathfrak{A}_{1}$ semiprojective, then we have a short exact sequence of groups
  $$\xymatrix{\{1\}\ar[r] & \operatorname{\overline{Inn}}(\A_1)\ar[r] & \operatorname{Aut}(\A_1)\ar[r] & \operatorname{Aut}_{\Lambda^{\textnormal{red}}}(\kE^\textnormal{red}(e))\ar[r] & \{1\}.}$$
   \end{corol}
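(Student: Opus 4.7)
The plan is to deduce this corollary directly from Corollary~\ref{c:unithomotopy} combined with Theorem~\ref{t:aue}. The former already provides a short exact sequence of the desired form but with $\operatorname{\overline{Inn}}_h(\mathfrak{A}_1)$ in place of $\operatorname{\overline{Inn}}(\mathfrak{A}_1)$, so the task reduces to proving the equality $\operatorname{\overline{Inn}}_h(\mathfrak{A}_1) = \operatorname{\overline{Inn}}(\mathfrak{A}_1)$. First I would note that since $\mathfrak{A}_0$ and $\mathfrak{A}_2$ are both simple, $\mathfrak{A}_0$ is the unique non-trivial ideal of $\mathfrak{A}_1$ and is therefore preserved by every automorphism; so every automorphism of $\mathfrak{A}_1$ canonically defines a morphism of extensions $e \to e$, which is what allows me to apply Theorem~\ref{t:aue}.

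The inclusion $\operatorname{\overline{Inn}}_h(\mathfrak{A}_1) \subseteq \operatorname{\overline{Inn}}(\mathfrak{A}_1)$ is immediate, since sampling a norm-continuous path of unitaries at integer times produces a sequence implementing approximate unitary equivalence. For the reverse inclusion, given $\alpha \in \operatorname{\overline{Inn}}(\mathfrak{A}_1)$ I would apply Theorem~\ref{t:aue} with $\phi = \alpha$ and $\psi = \operatorname{id}_{\mathfrak{A}_1}$, both regarded as morphisms $e \to e$; the approximate unitary equivalence of $\phi_1$ and $\psi_1$ holds by hypothesis, and the conclusion yields $\kE(\alpha) = \kE(\operatorname{id})$, so in particular $\kE^{\textnormal{red}}(\alpha) = \operatorname{id}$. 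Thus $\alpha$ lies in the kernel of the surjection from Corollary~\ref{c:unithomotopy}, which is precisely $\operatorname{\overline{Inn}}_h(\mathfrak{A}_1)$, yielding $\operatorname{\overline{Inn}}(\mathfrak{A}_1) \subseteq \operatorname{\overline{Inn}}_h(\mathfrak{A}_1)$ and hence equality.

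The main verification is that the two algebraic hypotheses of Theorem~\ref{t:aue} are in force: $\mathfrak{A}_1$ is semiprojective by assumption, and $\mathfrak{A}_0$ must be generated by projections. The latter is the only potentially non-routine point, but it follows from the fact that $\mathfrak{A}_0$ is a separable, stable, simple, purely infinite $C^{*}$-algebra, hence of real rank zero by Zhang's dichotomy; in any $C^{*}$-algebra of real rank zero the linear span of projections is norm-dense, so $\mathfrak{A}_0$ is in particular generated by its projections. Once this is established, surjectivity and exactness at $\operatorname{Aut}(\mathfrak{A}_1)$ in the new sequence are inherited directly from Corollary~\ref{c:unithomotopy} after replacing $\operatorname{\overline{Inn}}_h$ by the equal subgroup $\operatorname{\overline{Inn}}$, and I do not anticipate any further obstacle.
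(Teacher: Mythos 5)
Your proof is correct and follows essentially the same route as the paper: it combines Theorem~\ref{t:aue} with Corollary~\ref{c:unithomotopy}, the former forcing every approximately inner automorphism into the kernel of the map to $\operatorname{Aut}_{\Lambda^{\textnormal{red}}}(\kE^\textnormal{red}(e))$ and the latter identifying that kernel with $\operatorname{\overline{Inn}}_h(\A_1)$, so that $\CInn(\A_1)=\operatorname{\overline{Inn}}_h(\A_1)$ and the sequence is exact. You are in fact more explicit than the published proof in verifying the hypotheses of Theorem~\ref{t:aue} --- that automorphisms preserve the unique non-trivial ideal and that $\mathfrak{A}_0$ is generated by projections via Zhang's dichotomy --- points the paper leaves implicit.
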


\begin{proof}
By Theorem~\ref{t:aue}, $\xymatrix{
\operatorname{\overline{Inn}}(\A_1)\ar[r] & \operatorname{Aut}(\A_1)\ar[r] & \operatorname{Aut}_{\Lambda^{\textnormal{red}}}(\kE^\textnormal{red}(e))
}$ the chain complex is an exact sequence.  It is clear that $\xymatrix{
\{ 1 \} \ar[r] & \operatorname{\overline{Inn}}(\A_1)\ar[r] & \operatorname{Aut}(\A_1)
}$ is an exact sequence.  By Corollary~\ref{c:unithomotopy}, $\xymatrix{
\operatorname{Aut}(\A_1)\ar[r] & \operatorname{Aut}_{\Lambda^{\textnormal{red}}}(\kE^\textnormal{red}(e))\ar[r] & \{1\}
}$ is exact.  
\end{proof}

\begin{remar}
We can apply the above corollary to the following $C^{*}$-algebras:
\begin{itemize}
\item[(1)] $\A_1 = \B_{1} \otimes \K$, where $\B_{1}$ is a Cuntz-Krieger algebra satisfying condition (II) of Cuntz with exactly one non-trivial ideal. 

\item[(2)]  $\A_1 = \B_{1} \otimes \K$, where $\B_{1}$ is a unital purely infinite graph algebra with exactly one non-trivial ideal.  The fact that $\A_{1}$ is semiprojective follows from the results in \cite{ek_graph}.
\end{itemize} 
\end{remar}


\section*{Acknowledgement}

Supported by the Danish National Research Foundation through the Centre for Symmetry and Deformation (DNRF92).

\end{document}